\theoremstyle{plain}
\newtheorem{theorem}{Theorem} [section]
\newtheorem{lemma}[theorem]{Lemma}
\newtheorem{corollary}[theorem]{Corollary}
\theoremstyle{definition}
\newtheorem*{teo*}{Theorem}
\theoremstyle{definition}
\newtheorem{definition}[theorem]{Definition}
\newtheorem{remark}[theorem]{Remark}
\newtheorem{remarks}[theorem]{Remarks}
\newtheorem{problem}[theorem]{Problem}
\def\C{{\mathbb{C}}}
\def\N{{\mathbb{N}}}
\def\R{{\mathbb{R}}}
\def\Z{{\mathbb{Z}}}
\def\DO{{\mathcal{A}}}
\def\D{\mathbb{D}}
\def\Tc{{\mathcal{T}}}
\def\X{{\mathcal{F}}}
\def\li{l_i}
\newcommand{\CHI}{\hbox{\raise .4ex \hbox{$\chi$}}}
\newcommand{\perfect}{perfect}
\def\subset{\subseteq}
\newcommand{\hsd}{H^2(\D)}
\def\la{\lambda}
\begin{document}

\title{ Dynamical sampling }
\author[A. Aldroubi, C. Cabrelli, U. Molter, S. Tang]{A. Aldroubi, C. Cabrelli, U. Molter and S. Tang}

\address{\textrm{(A. Aldroubi)}
Department of Mathematics,
Vanderbilt University,
Nashville, Tennessee 37240-0001 USA}
\email{aldroubi@math.vanderbilt.edu}

\address{\textrm{(C. Cabrelli)}
Departamento de Matem\'atica,
Facultad de Ciencias Exac\-tas y Naturales,
Universidad de Buenos Aires, Ciudad Universitaria, Pabell\'on I,
1428 Buenos Aires, Argentina and
CONICET, Consejo Nacional de Investigaciones
Cient\'ificas y T\'ecnicas, Argentina}
\email{cabrelli@dm.uba.ar}

\address{\textrm{(U. Molter)}
Departamento de Matem\'atica,
Facultad de Ciencias Exac\-tas y Naturales,
Universidad de Buenos Aires, Ciudad Universitaria, Pabell\'on I,
1428 Buenos Aires, Argentina and
CONICET, Consejo Nacional de Investigaciones
Cient\'ificas y T\'ecnicas, Argentina}
\email{umolter@dm.uba.ar}

\address{\textrm{(S. Tang)}
Department of Mathematics,
Vanderbilt University,
Nashville, Tennessee 37240-0001 USA}
\email{stang@math.vanderbilt.edu}

\thanks{The research of
A.~Aldroubi and S.~Tang is supported in part by NSF Grant DMS- 1322099.
C.~Cabrelli and U.~Molter are partially supported by
Grants  PICT 2011-0436 (ANPCyT), PIP 2008-398 (CONICET)
and UBACyT  20020100100502 and 20020100100638 (UBA).
}

\keywords{Sampling Theory, Frames, Sub-Sampling,
Reconstruction, M\"untz-Sz\'asz Theorem, Feichtinger conjecture,
Carleson's Theorem }
\subjclass [2010] {94O20, 42C15, 46N99}

\date{\today}

\begin{abstract}
 Let $Y=\{f(i), Af(i), \dots, A^{\li}f(i): i \in \Omega\}$, where $A$ is a bounded operator on $\ell^2(I)$. The problem under consideration is to find necessary and sufficient conditions on $A, \Omega, \{l_i:i\in\Omega\}$ in order to recover any $ f \in \ell^2(I)$ from the measurements $Y$. This is the so called dynamical sampling problem in which we seek to recover a function $f$ by combining coarse samples of $f$ and its futures states $A^lf$.  We completely solve this problem in finite dimensional spaces, and for a large class of self adjoint operators in infinite dimensional spaces. In the latter case, the M\"untz-Sz\'asz Theorem  combined with the Kadison-Singer/Feichtinger Theorem allows us to show that $Y$ can never be a Riesz  basis when $\Omega$ is finite. We can also show that, when $\Omega$ is finite,  $Y=\{f(i), Af(i), \dots, A^{\li}f(i): i \in \Omega\}$ is not a frame except for some very special cases. The existence of these special cases is derived from  Carleson's Theorem for interpolating sequences in the Hardy space $\hsd$.

\end{abstract}

\maketitle

\section{Introduction}

Dynamical sampling refers to the process  that results from sampling
an evolving signal $f$ at various times and asks the question: when do coarse samplings
taken at varying times contain the same information as a finer sampling taken at the
earliest time? In other words, under what conditions on an evolving system, can time
samples be traded for spatial samples? Because dynamical sampling uses samples from varying time levels for a single reconstruction,
it departs from classical sampling theory in which a signal $f$ does not evolve in time and is to be reconstructed from its samples at a single time $t=0$, see  \cite[and references therein]{AH12, ABK08, AG01, BG12, BF01, CRT06, GKKY12, G04, HL12, HNS09, J11, LM94,NS10,S01,Su06}.

 The general dynamical sampling problem can be stated as follows: Let $f$ be a function in a  separable Hilbert space $\mathcal H$, e.g., $\C^d$ or $\ell^2(\N)$, and assume that $f$ evolves through an evolution operator $A: \mathcal H \to \mathcal H$ so that the function at time $n$ has evolved to become $f^{(n)}=A^nf$. We identify $\mathcal H$ with $\ell^2(I)$ where $I=\{1,\dots,d\}$  in the finite dimensional  case,  $I =\N$ in the infinite dimensional case. We denote by  $\{e_i\}_{i\in I}$ the standard basis of $\ell^2(I)$.
 
 The time-space sample at time $t\in \N$ and location $p\in I$, is the value $A^tf(p)$. In this way we associate to each pair $(p,t)\in I\times \N$
 a sample value.  
 
  The general dynamical sampling problem can then be described as: Under what conditions on the operator $A$, 
  and a set $S\subset I\times \N$, can  every $f$ in the Hilbert space $H$ be recovered in a stable way from the samples in $S$.
   
 At  time $t=n$, we sample $f$ at the locations $\Omega_n \subset I$ resulting in the measurements $\{f^{(n)}(i): i \in \Omega_n\}$.
 Here  $f^{(n)}(i)=< A^nf,e_i>.$

The measurements $\{f^{(0)}(i): i \in \Omega_0\}$ that we have from our original signal $f=f^{(0)}$ will contain in general insufficient information to 
recover $f$. In other words, $f$ is undersampled. So we will need some extra information from the iterations of $f$ by the operator $A$: $\{f^{(n)}(i)=A^nf(i): i \in \Omega_n\}$. Again, for each $n$, the measurements $\{f^{(n)}(i): i \in \Omega_n\}$ that we have by sampling our signals $A^nf$ at $\Omega_n$ are insufficient to recover $A^nf$ in general. 

Several questions arise.
Will the combined measurements $\{f^{(n)}(i): i \in \Omega_n\}$  contain in general all the information needed to recover $f$ (and hence $A^nf$)? How many iterations $L$ will we need  (i.e., $n=1,\dots,L$) 
to recover the original signal?  What are the right ``spatial'' sampling sets $\Omega_n$ we need to
 choose in order to recover $f$? In what way all these questions depend on the operator $A$?

The goal of this paper is to answer these questions and understand completely this problem that we can formulate as:

Let $A$ be the evolution operator acting in $\ell^2(I)$, $\Omega \subset I$ a fixed set of locations, and  $\{\li:i\in \Omega\}$ where $\li$ is a positive integer or $+\infty.$

\begin {problem} \label {GenDysamp} Find conditions on  $A, \Omega$ and $\{\li:i\in \Omega\}$ such that any vector $f\in \ell^2(I)$ can be recovered from the  samples \;
$Y=\{f(i), Af(i), \dots, A^{\li}f(i): i \in \Omega\}$ in a stable way.
\end {problem}
 Note that, in Problem \ref{GenDysamp}, we allow $l_i$ to be finite or infinite. Note also that, Problem \ref {GenDysamp} is not the most general problem since the way it is stated implies that $\Omega=\Omega_0$ and $\Omega_n=\{i \in \Omega_0: l_i\ge n\}$. Thus, an underlying assumption is that $\Omega_{n+1}\subset \Omega_n$ for all $n\ge 0$. 
 For each $i\in\Omega$, let $S_i$ be the operator from $\mathcal H=\ell^2(I)$ to $\mathcal H_i=\ell^2(\{0,\dots,\li\})$, defined by $S_i f= (A^jf(i))_{j=0,\dots,\li}$ and define $S$ to be the operator $S=S_0 \oplus S_1\oplus \dots$ 

Then $f$ can be recovered from $Y=\{f(i), Af(i), \dots, A^{\li}f(i): i \in \Omega\}$ in a stable way if and only if there exist constants $c_1,c_2>0$ such that 
\begin {equation}
\label {eqnorm}
c_1\|f\|^2_2\le\|\mathcal S f\|^2_2=\sum\limits_{i\in \Omega} \|S_if\|_2^2\le c_2\|f\|^2_2.
\end{equation}
Using the standard basis  $\{e_i\}$ for $\ell^2(I)$, we obtain from \eqref {eqnorm} that
\[c_1\|f\|^2_2\le\sum\limits_{i \in \Omega} \sum\limits_{j=0}^{\li}|\langle f,A^{\ast j} e_i\rangle|^2\le c_2\|f\|^2_2.\]
Thus we get
\begin {lemma} \label {Eqv1}Every $f \in \ell^2(I)$  can be recovered from the measurements set $Y=\{f(i), Af(i), \dots, A^{\li}f: i \in \Omega\}$ in a stable way if and only if  the set of vectors $\{A^{\ast j}e_i: \; i\in \Omega,  \, j=0, \dots, \li\}$ is a frame for $\ell^2(I)$.
\end {lemma}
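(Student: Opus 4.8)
The plan is to show that the two conditions in the lemma --- stable recoverability of $f$ from $Y$, and the frame property of $\{A^{\ast j}e_i : i\in\Omega,\ j=0,\dots,\li\}$ --- are literally the same inequality, read in two different ways. First I would recall the definition: a sequence $\{g_k\}$ in a Hilbert space $\mathcal H$ is a frame if there exist $c_1,c_2>0$ with $c_1\|f\|^2 \le \sum_k |\langle f, g_k\rangle|^2 \le c_2\|f\|^2$ for all $f\in\mathcal H$. So the frame property of $\{A^{\ast j}e_i\}$ is exactly the chain of inequalities $c_1\|f\|_2^2 \le \sum_{i\in\Omega}\sum_{j=0}^{\li} |\langle f, A^{\ast j}e_i\rangle|^2 \le c_2\|f\|_2^2$ displayed just before the lemma.

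Next I would justify that this chain is equivalent to \eqref{eqnorm}. The operator $S_i\colon \mathcal H\to\mathcal H_i=\ell^2(\{0,\dots,\li\})$ is given by $S_if = (A^jf(i))_{j=0,\dots,\li}$, and $A^jf(i) = \langle A^jf, e_i\rangle = \langle f, A^{\ast j}e_i\rangle$. Hence $\|S_if\|_2^2 = \sum_{j=0}^{\li} |\langle f, A^{\ast j}e_i\rangle|^2$, and summing over $i\in\Omega$ gives $\|\mathcal Sf\|_2^2 = \sum_{i\in\Omega}\sum_{j=0}^{\li}|\langle f, A^{\ast j}e_i\rangle|^2$, which is precisely the middle term of the frame inequality. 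One has to be slightly careful that this computation makes sense when some $\li=+\infty$, i.e. that $\|S_if\|_2^2$ is the (possibly infinite) sum of a series; the inequalities are then interpreted in $[0,+\infty]$ and the upper bound $c_2\|f\|_2^2$ forces finiteness, so $S$ and all $S_i$ are genuinely bounded operators into the respective $\ell^2$ spaces and $S$ maps into $\bigoplus_i\mathcal H_i$. This is a routine measure-theoretic remark rather than a real obstacle.

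Finally I would tie in the first equivalence, namely that stable recovery of $f$ from $Y$ is equivalent to \eqref{eqnorm}. The upper bound in \eqref{eqnorm} is automatic whenever $\mathcal S$ is bounded (which, as just noted, is part of the hypothesis that the measurements are well-defined elements of $\ell^2$); the substantive content is the lower bound $c_1\|f\|_2^2 \le \|\mathcal Sf\|_2^2$, which says exactly that $\mathcal S$ is bounded below, equivalently injective with closed range, equivalently admits a bounded left inverse $R$ with $Rf = f$ when fed the data $Y = \mathcal Sf$. This is the standard notion of ``stable recovery'' and is recorded in the text preceding the lemma; I would simply cite it. Chaining the three equivalences --- stable recovery $\iff$ \eqref{eqnorm} $\iff$ frame inequality for $\{A^{\ast j}e_i\}$ --- completes the proof.

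I do not expect any genuine obstacle here: the lemma is a bookkeeping statement unwinding definitions, and the only point requiring a word of care is the handling of infinite $\li$, where the relevant sums should be treated as elements of $[0,+\infty]$ until the frame upper bound guarantees their finiteness. The main ``work'' is making sure the identification $A^jf(i) = \langle f, A^{\ast j}e_i\rangle$ and the direct-sum structure of $\mathcal S$ are stated cleanly.
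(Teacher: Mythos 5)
Your proposal is correct and follows essentially the same route as the paper: the authors likewise define $S_i f=(A^jf(i))_j$, take stable recovery to mean the two-sided inequality \eqref{eqnorm} for $\mathcal S=\oplus_i S_i$, and rewrite $A^jf(i)=\langle f,A^{\ast j}e_i\rangle$ to see that this inequality is literally the frame condition for $\{A^{\ast j}e_i\}$. Your extra remark about interpreting the sums in $[0,+\infty]$ when some $\li=\infty$ is a harmless refinement of the same bookkeeping.
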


 \subsection{Connections to other fields }
 The dynamical sampling problem has similarities to other areas of mathematics. For example, in wavelet theory \cite {BJ02,CM11,D92,HW96,M98,OS04,SN96}, a high-pass convolution operator $H$ and a low-pass convolution operator $L$ are applied to the
 function $f$. The goal is to design operators $H$ and $L$ so that reconstruction of $f$ from samples
of $Hf$ and $Lf$ is feasible. In dynamical sampling there is only one operator $A$, and it is
applied iteratively to the function $f$. Furthermore, the operator $A$ may be high-pass, low-pass,
or neither and is given in the problem formulation, not designed.
 
In inverse problems (see \cite {N11} and the references therein), a single operator $B$, that often represents a physical process,
is to be inverted. The goal is to recover a function $f$ from the observation  $Bf$. If
$B$ is not bounded below, the problem is considered an ill-posed inverse problem. 
 Dynamical sampling is different because $A^nf $ is not necessarily known for any $n$; instead $f$ is to be
recovered from partial knowledge of $A^nf$ for many values of $n$. In fact, the dynamical
sampling problem can be phrased as an inverse problem when the operator $B$ is the
operation of applying the operators $A, A^2,\dots, A^L$ and then subsampling each of these
signals accordingly on some sets $\Omega_n$ for times $t=n$.
 
 The methods that we develop for studying the dynamical sampling problem are related to methods in spectral theory, operator algebras, and frame theory \cite {ABK08, CCL12,CKL08, Con96, FS10, G04,GL04,HL08, S08ACM}. 
 For example, the proof of our Theorems \ref {Noframe0} and \ref {NotFrame2},  below, use the newly proved \cite {MSS13} Kadison-Singer/Feichtinger conjecture \cite {CTC06, CCLV05}. Another example is the existence of cyclic vectors that form frames, which is related to Carleson's Theorem for interpolating sequences in the Hardy space $\hsd$ (c.f., Theorem \ref {OnePointFrame}).

Application to Wireless Sensor Networks (WSN) is a natural setting for dynamical sampling. In WSN,
large amounts of physical sensors are distributed to gather information about a field
to be monitored, such as temperature, pressure, or pollution. WSN are used in many
industries, including the health, military, and environmental industries (c.f., \cite{HRLV10,LDV11,RCLV11,LV09,RMG12,RM10} and the reference therein). The  goal  
 is to exploit the evolutionary structure and the placement 
of sensors to reconstruct an unknown  field. The idea is simple. If it is not possible to place sampling devices at the desired locations, then we may be able to recover
the desired information by placing the sensors elsewhere and use the evolution process to recover the signals at the relevant locations. In addition, if the cost of a
sensor is expensive relative to the cost of activating the sensor, then, we may be able to recover
the same information with fewer sensors, each being activated more frequently. In this way, reconstruction of a signal becomes cheaper. In other words we perform a time-space trade-off. 

 \subsection{ Contribution and organization} In section \ref {FDC} we present the results  for the  finite dimensional case. Specifically, Subsection \ref {DT} concerns the special case of diagonalizable operators acting on vectors in $\C^d$. This case is treated first in order  to give some intuition about the general theory. For example, Theorem \ref {TAD0} explains the reconstruction properties for the examples below:
 Consider the following two matrices acting on $\C^5$.
    \begin {equation*}\label {A1}
P=
  \left(\begin{array} {rrrrr}
 9/2 &   1/2 &  -7 &  5  & -3\\
   15/2  &  3/2  & -11  &  5 &  -7\\
    5   &      0  & -7  &  5  & -5\\
    4    &     0   & -4  &  3  & -4\\
    1/2  & 1/2  & -1   &      0  &  1
     \end{array}\right) \quad
     Q =  \left(\begin{array} {rrrrr} 
        3/2&   -1/2 &    2 &         0  &  1\\
    1/2 &    5/2 &         0    &      0  &  -1 \\
         0    &      0   &  3 &         0     &     0 \\
    1  &         0   &  -1  &    3  &   -1 \\
   -1/2 &   -1/2 &   1 &         0   &  3 
 \end{array}\right) .
 \end {equation*}
 
 For the matrix $P$, Theorem \ref {TAD0} shows that any $f \in \C^5$ can be recovered from the data
sampled at the single ``spacial'' point $i = 2$, i.e., from
\[Y = \{f(2),Pf(2),P^2
f(2),P^3
f(2),P^4f(2)\}.
 \]
However, if $i = 3$, i.e., $Y = \{f(3),Pf(3),P^2f(3),P^3f(3),P^4f(3)\}$ 
 the information is not sufficient to determine $f$. In fact if we do not sample at $i = 1$, or
$i = 2$, the only way to recover any $f \in \C^5$ is to sample at all the remaining ``spacial'' points $i = 3, 4, 5$. For example, $Y = \{f(i),Pf(i) : i = 3, 4, 5\}$ is
enough data to recover $f$, but $Y = \{f(i),Pf(i), . . . ,P^Lf(i) : i = 3, 4\}$, is
not enough information no matter how large $L $ is.

 For the matrix $Q$, Theorem \ref {TAD0} implies that it is not possible to reconstruct
$f \in \C^5$ if the number of sampling points is less than $3$. However, we
can reconstruct any $f \in \C^5$ from the data
 \begin {align*}
Y =&\{f(1),Qf(1),Q^2f(1),Q^3f(1),Q^4f(1),\\
&f(2),Qf(2),Q^2f(2),Q^3f(2),Q^4f(2),\\
&f(4),Qf(4)\}.
\end{align*}
Yet, it is not possible to recover $f$ from the set $Y = \{Q^lf(i) : i = 1, 2, 3, l =
0, \dots ,L\}$ for any $L$. Theorem \ref {TAD0} gives all the sets $\Omega$ such that any $f \in \C^5$ can be recovered from $Y = \{A^lf(i) :i \in \Omega, l = 0, . . . \li\}$.  

In subsection \ref {GLT} Problem \ref {GenDysamp} is solved for the general case in $\C^d$, and
Corollary \ref {tadcul-cor} elucidates the example below: Consider 
    \begin {equation*}\label {A3}
R=
 \left(\begin{array} {rrrrr}  0  & -1 &    4 &  -1 &    2\\
    2&     1 &   -2 &    1 &   -2\\
   -1/2 &   -1/2 &    3 &        0  &  1\\
    1/2 &   -1/2 &         0  &  2 &         0\\
   -1/2 &   -1/2 &    2 &   -1 &    2
    \end{array}\right).
 \end {equation*}

  Then, Corollary \ref {tadcul-cor} shows that $\Omega$ must contain at least two ``spacial''
sampling points for the recovery of functions from their time-space samples to 
be feasible. For example, if $\Omega = \{1, 3\}$, then $Y = \{R^lf(i) : i \in \Omega, l =
0, \dots ,L\}$ is enough recover $f\in \C^5$. However, if $\Omega$ is changed to $\Omega=\{1, 2\}$, then
$Y = \{R^lf(i) : i \in \Omega, l = 0, \dots ,L\}$ does not provide enough information.
  
  The dynamical sampling problem in infinite dimensional separable Hilbert
spaces is studied in Section \ref {DSID}. For this case, we restrict ourselves to certain
classes of self adjoint operators in $\ell^2(\N)$. In light of Lemma \ref {Eqv1}, in Subsection
\ref {completeness}, we characterize the sets $\Omega \subset \N$ such that $\X_\Omega=\{A^je_i : i \in \Omega, j = 0,\dots, \li\}$ is complete in $\ell^2(\N)$ (Theorem \ref  {complete}). However, using the newly proved [28]
Kadison-Singer/Feichtinger conjecture [11, 9], we also show that if $\Omega$ is a
finite set, then $\{A^je_i : i \in \Omega, j = 0,\dots , \li\}$ is never a basis (see Theorem
\ref {NonBasisFiniteOmega}). It turns out that the obstruction to being a basis is redundancy.  This fact  is proved
using the beautiful M\"untz-Sz\'asz Theorem  \ref  {MS}  below. 

Although $\X_\Omega=\{A^je_i : i \in \Omega, j = 0,\dots, \li\}$ cannot be a basis, it should be possible that $\X_\Omega$ is a frame for sets $\Omega\subset \N$ with
finite cardinality. It turns out however, that except for special cases, if $
  \Omega$ is a finite
set, then $\X_\Omega$ is not a frame for $\ell^2(\N)$. 

If $\Omega$ consists of a single vector, we are able to characterize completely when  $\X_\Omega$ is a frame for $\ell^2(\N)$ (Theorem
\ref {OnePointFrame}), by relating our problem to
 a  theorem by Carleson
on interpolating sequences in the Hardy spaces $\hsd$.

\section{Finite dimensional case}
\label {FDC}
In this section we will address the finite dimensional case. That is, our evolution operator is a matrix $A$ acting on the space $\C^d$
and $I= \{1,\dots,d\}$.
Thus, given $A$, our goal is to find necessary and sufficient conditions on the set of indices $\Omega \subset I$ and the numbers $ \{\li\}_{i\in \Omega}$ such that
every vector $f\in \C^d$ can be recovered from the samples $\{A^j f(i): i\in \Omega, \;j=0,\dots,\li\}$ or equivalently (using Lemma \ref{Eqv1}), the set of vectors

\begin{equation}\label{frame}
\{A^{\ast j} e_i: i\in \Omega,\; j=0,\dots,\li\} \text{ is a frame of } \C^d.
\end{equation}
(Note that this implies that we need at least $d$ space-time samples to be able to recover the vector $f$).

The problem can be further reduced as follows:
Let $B$ be any invertible matrix with complex coefficients, and let $Q$ be the matrix $Q=BA^*B^{-1}$, so that $A^* = B^{-1}QB$.
Let  $b_i$ denote the ith column of $B$.
Since a frame is transformed to a frame by invertible linear operators, condition (\ref{frame})  is equivalent to $\{Q^jb_i: i \in \Omega, \; j=0,\dots, \li\}$ being a frame of $\C^d$.

This allows us to replace the general matrix $A^*$ by a possibly simpler matrix and we have:

 \begin {lemma} \label {Eqv2-diag} Every $f \in \C^d$  can be recovered from the measurement set $Y=\{A^jf(i): \; i \in \Omega, j=0,\dots, \li\}$ if and only if  the set of vectors $\{Q^jb_i: i \in \Omega, \; j=0,\dots, \li\}$ is a frame for $\C^d$.
\end {lemma}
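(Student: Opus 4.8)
The plan is to start from Lemma~\ref{Eqv1}, which tells us that recoverability of $f$ from $Y$ is equivalent to the family $\{A^{\ast j}e_i : i\in\Omega,\, j=0,\dots,\li\}$ being a frame for $\C^d$. Then I would exploit the fact that a bounded invertible operator maps a frame onto a frame, and that its inverse does the same. Concretely, with $Q=BA^\ast B^{-1}$ so that $A^\ast=B^{-1}QB$, one checks the identity $A^{\ast j}=B^{-1}Q^jB$ for every $j\ge 0$ by a trivial induction (the conjugation collapses the telescoping product $B^{-1}QB\cdot B^{-1}QB\cdots$). Hence $A^{\ast j}e_i = B^{-1}Q^j(Be_i)=B^{-1}Q^jb_i$, since $b_i=Be_i$ is precisely the $i$th column of $B$.

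The second step is the frame-transfer argument. Since $B^{-1}$ is bounded and invertible on $\C^d$, the family $\{A^{\ast j}e_i\} = \{B^{-1}Q^jb_i\}$ is a frame for $\C^d$ if and only if $\{Q^jb_i : i\in\Omega,\, j=0,\dots,\li\}$ is a frame for $\C^d$: apply $B^{-1}$ to pass from the $Q$-family to the $A^\ast$-family, and apply $B$ to go back. (In finite dimensions ``frame'' just means ``spanning set,'' so this is even more elementary, but the invertible-operator formulation is the cleanest.) Combining this equivalence with Lemma~\ref{Eqv1} yields exactly the statement of Lemma~\ref{Eqv2-diag}.

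There is essentially no obstacle here; the only thing to be careful about is making the indexing match — that $b_i$ is the $i$th column of $B$, equivalently $b_i=Be_i$, so that the operator $B$ carries the standard basis vectors appearing in the $A^\ast$-formulation onto exactly the vectors $b_i$ appearing in the $Q$-formulation. I would write the proof in three short lines: (i) record $A^{\ast j}e_i=B^{-1}Q^jb_i$; (ii) invoke invertibility of $B$ to transfer the frame property in both directions; (iii) quote Lemma~\ref{Eqv1}. If one wants the frame constants explicitly, they simply scale by $\|B\|^{\pm 2}$ and $\|B^{-1}\|^{\pm 2}$, but since the statement only asserts the qualitative equivalence, that refinement is unnecessary.

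\begin{proof}[Proof of Lemma~\ref{Eqv2-diag}]
By Lemma~\ref{Eqv1}, $f\in\C^d$ can be recovered from $Y$ if and only if $\{A^{\ast j}e_i : i\in\Omega,\, j=0,\dots,\li\}$ is a frame for $\C^d$. Since $A^\ast=B^{-1}QB$, an immediate induction on $j$ gives $A^{\ast j}=B^{-1}Q^jB$ for all $j\ge 0$, whence, writing $b_i=Be_i$ for the $i$th column of $B$,
\[
A^{\ast j}e_i = B^{-1}Q^j B e_i = B^{-1}Q^j b_i, \qquad i\in\Omega,\ j=0,\dots,\li.
\]
Because $B^{-1}$ and $B$ are bounded and invertible on $\C^d$, and a bounded invertible operator maps frames to frames, the family $\{B^{-1}Q^jb_i : i\in\Omega,\, j=0,\dots,\li\}$ is a frame for $\C^d$ if and only if $\{Q^jb_i : i\in\Omega,\, j=0,\dots,\li\}$ is a frame for $\C^d$. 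Combining the two equivalences proves the claim.
\end{proof}
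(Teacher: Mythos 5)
Your proof is correct and is essentially the paper's own argument: the paper likewise conjugates via $A^{\ast j}=B^{-1}Q^jB$, identifies $b_i=Be_i$ with the $i$th column of $B$, and invokes the fact that invertible operators map frames to frames, combined with Lemma~\ref{Eqv1}. Your write-up just makes explicit the induction and the two-way transfer that the paper states in one sentence.
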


We  begin with the simpler case when $A^*$ is a diagonalizable matrix.

\subsection{Diagonalizable Transformations} \label {DT}
Let $A\in \C^{d\times d}$ be a matrix that can be written as $A^\ast=B^{-1}DB$ where $D$ is a diagonal matrix of the form

\begin {equation}\label {Adiag}
D=
 \begin{pmatrix}
 
  \lambda_1 I_1 & 0 & \cdots & 0 \\
  0 & \lambda_2I_2 & \cdots & 0 \\
  \vdots  & \vdots  & \ddots & \vdots  \\
  0 & 0 & \cdots & \lambda_n I_n
 \end{pmatrix}.
 \end {equation}
In \eqref {Adiag}, $I_k$ is an $h_k\times h_k$ identity matrix, and $B\in\C^{d\times d}$ is an invertible matrix. Thus $A^\ast$ is a diagonalizable matrix with distinct eigenvalues $\{\lambda_1,\dots,\lambda_n\}$.

Using Lemma \ref {Eqv2-diag} and $Q=D$,  Problem \ref {GenDysamp} becomes the problem of finding necessary and sufficient conditions on vectors $b_i$ and numbers $\li$, and the set $\Omega \subset \{1,\dots,m\}$ such that the set of vectors $\{D^jb_i: i \in \Omega, \; j=0,\dots, \li\}$ is a frame for $\C^d$. 
Recall that the {\em $Q$-annihilator $q^Q_b$ of a vector $b$} is the monic polynomial of smallest degree, such that $q_b^Q(Q)b \equiv 0$. 
Let  $P_j$ denote 
the orthogonal projection in $\C^d$ onto the eigenspace of $D$ associated to the eigenvalue $\lambda_j$. Then we have:

\begin {theorem}\label {TAD0}

Let $\Omega \subset \{1, \dots, d\}$ and let $\{b_i: i \in \Omega \}$ be vectors in $\C^d$. Let $r_i$ be the degree of the $D$-annihilator of $b_i$ and let $l_i=r_i-1$. Then  $\{D^{j}b_i: \; i\in \Omega,  \, j=0, \dots, \li\}$ is a frame of $\C^d$ if and only if $\{P_j(b_i):i \in \Omega\}$ form a frame of $P_j(\C^d)$, $j=1,\dots ,n$.
\end{theorem}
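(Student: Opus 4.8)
The plan is to decompose $\C^d$ into the eigenspaces $P_j(\C^d)$ and track how the cyclic system $\{D^jb_i\}$ interacts with each eigenspace separately. First I would write, for each $i\in\Omega$, the expansion $b_i=\sum_{j=1}^n P_j(b_i)$, and observe that the degree $r_i$ of the $D$-annihilator $q^D_{b_i}$ equals the number of indices $j$ for which $P_j(b_i)\neq 0$: indeed $q^D_{b_i}(x)=\prod_{j:\,P_j(b_i)\neq 0}(x-\lambda_j)$, since $D$ acts as the scalar $\lambda_j$ on each $P_j(\C^d)$, the $\lambda_j$ are distinct, and a polynomial $p$ kills $b_i$ iff $p(\lambda_j)P_j(b_i)=0$ for all $j$. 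Hence $l_i=r_i-1$ is exactly one less than the number of ``active'' eigenvalues for $b_i$.

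Next I would fix $i$ and analyze $\mathrm{span}\{D^jb_i:\ j=0,\dots,l_i\}$. Since $D^jb_i=\sum_j \lambda_j^{\,j}P_j(b_i)$ (abusing the index $j$ — in the writeup I would rename the power to $k$), the vectors $D^kb_i$ for $k=0,\dots,r_i-1$ are obtained from the nonzero vectors $\{P_j(b_i):P_j(b_i)\neq 0\}$ by a square Vandermonde matrix in the distinct nodes $\{\lambda_j:P_j(b_i)\neq 0\}$, which is invertible. Therefore $\mathrm{span}\{D^kb_i:k=0,\dots,l_i\}=\mathrm{span}\{P_j(b_i):j=1,\dots,n\}$, and more precisely the change of basis is block-diagonal and invertible. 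This is the crux: it lets me pass between the iterated system and the ``projected'' system without losing the frame bounds, since an invertible linear map sends frames to frames (as already used in the paragraph before Lemma \ref{Eqv2-diag}).

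Then I would assemble the pieces over $i\in\Omega$. The full system $\{D^kb_i:i\in\Omega,\ k=0,\dots,l_i\}$ spans $\C^d$ (equivalently, is a frame, since in finite dimensions spanning $\iff$ frame) iff $\sum_{i\in\Omega}\mathrm{span}\{P_j(b_i):j\}=\C^d$. Using $\C^d=\bigoplus_{j=1}^n P_j(\C^d)$ and the fact that each $P_j(b_i)$ lies in the single summand $P_j(\C^d)$, this direct sum is all of $\C^d$ iff for every $j$ the vectors $\{P_j(b_i):i\in\Omega\}$ span $P_j(\C^d)$ — i.e., form a frame of $P_j(\C^d)$. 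For the frame-bound bookkeeping in the ``only if'' direction I would note that $P_j$ is the restriction to one orthogonal summand, so $\|P_j f\|\le\|f\|$ and one recovers lower/upper bounds on each block from the global ones; for ``if'' one takes the minimum of the lower bounds and sum of the upper bounds across the finitely many $j$, and composes with the uniformly-bounded block Vandermonde inverses.

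The main obstacle is purely bookkeeping rather than conceptual: one must be careful that for a given $i$ only the eigenvalues with $P_j(b_i)\neq 0$ appear, so the Vandermonde block for $b_i$ has size $r_i\times r_i$ and is genuinely invertible — it would be wrong to use all $n$ nodes. A secondary subtlety is making the ``frame'' language match the finite-dimensional reality: I would state once, early, that a finite set of vectors in a finite-dimensional space is a frame iff it spans, so that all the equivalences above are literally equalities of spans, and the explicit constants $c_1,c_2$ are then automatic. No deep input (Kadison–Singer, Müntz–Szász, Carleson) is needed here; this theorem is elementary linear algebra and serves as the warm-up for the general finite-dimensional case in Subsection \ref{GLT}.
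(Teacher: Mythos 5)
Your proposal is correct, and it reaches the conclusion by a slightly different route than the paper's own proof of Theorem~\ref{TAD0}. You argue primally: for each $i$ you identify $q^D_{b_i}(x)=\prod_{j:\,P_j(b_i)\neq 0}(x-\lambda_j)$, use the $r_i\times r_i$ Vandermonde matrix in the \emph{active} nodes to show $\mathrm{span}\{D^kb_i:k=0,\dots,l_i\}=\mathrm{span}\{P_j(b_i):j=1,\dots,n\}$, and then assemble over $i$ through the direct sum $\C^d=\bigoplus_j P_j(\C^d)$; since a finite spanning set is a frame, both implications drop out of one span identity. The paper instead argues dually: it takes $x$ orthogonal to all the samples $D^lb_i$, $l=0,\dots,l_i$, uses the annihilator degree to extend this orthogonality to \emph{all} powers $D^lb_i$, and then inverts a single full $n\times n$ Vandermonde matrix (in all $n$ distinct eigenvalues) applied to the vector $z_i=(\langle P_jb_i,P_jx\rangle)_j$ to conclude $P_jx=0$ for every $j$. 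So your warning that "it would be wrong to use all $n$ nodes" applies only to your primal formulation; in the paper's dual formulation the full Vandermonde is legitimate precisely because the annihilator-degree step supplies orthogonality at enough powers, and the possibly vanishing entries of $z_i$ cause no harm. What each approach buys: yours makes the span structure explicit, handles the (easy, but in the paper unwritten) "only if" direction at no extra cost, and isolates exactly where the hypothesis $l_i=r_i-1$ enters; the paper's is shorter to write, avoids bookkeeping about which projections vanish, and its orthogonality-plus-annihilator mechanism is the one that generalizes to Theorem~\ref{tadcul} and the infinite-dimensional completeness result (Theorem~\ref{complete}).
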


As a corollary, using Lemma \ref {Eqv2-diag} we get  
\begin {theorem}\label {TAD1}

Let $A^\ast=B^{-1}DB$, and  let $\{b_i: i\in \Omega  \}$ be the column vectors of $B$ whose indices belong to $\Omega$. Let $r_i$ be the degree of the $D$-annihilator of $b_i$ and let $l_i=r_i-1$. Then  $\{A^{\ast j}e_i: \; i\in \Omega,  \, j=0, \dots, \li\}$ is a frame of $\C^d$ if and only if $\{P_j(b_i):i \in \Omega\}$ form a frame of $P_j(\C^d)$, $j= 1 ,\dots ,n$. 

Equivalently, any vector $f \in \C^d$ can be recovered from  the  samples 
$Y=\{f(i), Af(i), \dots, A^{\li}f(i): i \in \Omega\}$ if and only if $\{P_j(b_i):i \in \Omega\}$ form a frame of $P_j(\C^d)$, $j=1,\dots ,n$.
\end{theorem}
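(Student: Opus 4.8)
The plan is to convert the frame statement into one about linear spans and then exploit the spectral structure of $D$. Since $\C^d$ is finite dimensional, a finite family of vectors contained in a subspace $W\subseteq\C^d$ is a frame for $W$ precisely when it spans $W$: the upper frame bound is automatic, and the lower bound is equivalent to the nonexistence of a nonzero vector of $W$ orthogonal to every member of the family. Hence the assertion to be proved reduces to the purely linear-algebraic equivalence
$$\Span\bigl\{D^{j}b_i:\ i\in\Omega,\ 0\le j\le l_i\bigr\}=\C^d \iff \Span\{P_j(b_i):\ i\in\Omega\}=P_j(\C^d)\ \text{for each } j=1,\dots,n.$$

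First I would analyze a single $b_i$. Writing $b_i=\sum_{k=1}^n P_k(b_i)$ and using that $D$ acts as $\lambda_k\,\mathrm{Id}$ on $P_k(\C^d)$ gives $D^{j}b_i=\sum_{k=1}^n\lambda_k^{\,j}P_k(b_i)$. Let $K_i=\{k:\ P_k(b_i)\ne0\}$. The nonzero vectors $\{P_k(b_i)\}_{k\in K_i}$ lie in mutually orthogonal subspaces, hence are linearly independent, so the $D$-annihilator of $b_i$ is exactly $\prod_{k\in K_i}(x-\lambda_k)$; therefore $r_i=|K_i|$ and $l_i=r_i-1$. Now the $r_i\times r_i$ matrix $\bigl(\lambda_k^{\,j}\bigr)_{k\in K_i,\,0\le j\le l_i}$ is a Vandermonde matrix with distinct nodes and so is invertible; together with the linear independence of $\{P_k(b_i)\}_{k\in K_i}$ this shows that $\{D^{j}b_i\}_{0\le j\le l_i}$ is linearly independent and spans the same subspace of $\C^d$ as $\{P_k(b_i)\}_{k\in K_i}$. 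In other words, $l_i=r_i-1$ is precisely the cutoff at which iterating $D$ on $b_i$ recovers all of its spectral components while no relation among the iterates has yet appeared. This middle step is the one I expect to require care, since both inclusions of the span identity rest on the Vandermonde invertibility together with the "orthogonality implies independence" observation.

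Finally I would assemble the pieces over $i\in\Omega$. Adjoining the zero vectors $P_k(b_i)$ for $k\notin K_i$ changes no span, so
$$\Span\bigl\{D^{j}b_i:\ i\in\Omega,\ 0\le j\le l_i\bigr\}=\Span\bigl\{P_k(b_i):\ i\in\Omega,\ 1\le k\le n\bigr\}=\bigoplus_{k=1}^n\Span\{P_k(b_i):\ i\in\Omega\},$$
where the last equality uses the orthogonal decomposition $\C^d=\bigoplus_{k=1}^nP_k(\C^d)$ together with $P_k(b_i)\in P_k(\C^d)$. This direct sum equals $\C^d$ if and only if $\Span\{P_k(b_i):\ i\in\Omega\}=P_k(\C^d)$ for every $k$, which is exactly the required equivalence. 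Theorem \ref{TAD1} then follows immediately by applying Theorem \ref{TAD0} with $Q=D$ through Lemma \ref{Eqv2-diag} (equivalently Lemma \ref{Eqv1}), taking $\{b_i:i\in\Omega\}$ to be the columns of $B$ indexed by $\Omega$.
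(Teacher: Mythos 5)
Your proposal is correct and follows essentially the same route as the paper: reduce to the diagonal matrix $D$ via Lemma \ref{Eqv2-diag}, decompose each $b_i$ into its eigenspace components $P_k(b_i)$, identify $r_i$ with the number of nonzero components, and exploit invertibility of a Vandermonde matrix built from the distinct eigenvalues. The only cosmetic difference is that you establish the span identity directly (a per-$i$ $r_i\times r_i$ Vandermonde argument, which yields both implications at once), whereas the paper's proof of Theorem \ref{TAD0} tests a vector orthogonal to all iterates against the full $n\times n$ Vandermonde matrix and writes out only the sufficiency direction, leaving the easy converse implicit.
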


Note that, in the previous  Theorem, the number of time-samples $l_i$ depends on the sampling point $i$. If instead the number of time-samples $L$ is the same for all $i \in \Omega$, (note that $L \geq max \{l_i:i \in \Omega\}$ is an obvious choice, but depending on the vectors $b_i$ it may be  possible to choose  $L\leq min \{l_i:i \in \Omega\}$), then we have the following Theorems (see Figures \ref {fig1})
\begin {theorem}\label{TAL0}
Let $\Omega \subset \{1, \dots, d\}$ and $\{b_i: i \in \Omega \}$ be a set of vectors in $\C^d$ such that $\{P_j(b_i): i\in \Omega\}$ form a frame of $P_j(\C^d)$, $j=1,\dots ,n$. Let $L$ be any fixed integer, then $E=\bigcup\limits_{\{i \in \Omega: b_i\neq 0\}} \{b_i,Db_i,\dots, D^{L}b_i\}$ is a frame of $\C^d$ if and only if $\{D^{L+1}b_i,: i \in \Omega\} \subset span (E)$. 
\end{theorem}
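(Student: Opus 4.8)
The plan is to reduce the frame assertion to a spanning condition and then to recognize that, under the hypothesis, spanning of $\Span(E)$ is equivalent to its $D$-invariance. I would begin with three elementary observations. First, since $E$ is a \emph{finite} family of vectors in the finite-dimensional space $\C^d$ it is automatically a Bessel sequence, so $E$ is a frame of $\C^d$ if and only if it is complete, i.e. $\Span(E)=\C^d$; this is the usual compactness argument (the quadratic form $f\mapsto\sum_{v\in E}|\langle f,v\rangle|^2$ attains a strictly positive minimum on the unit sphere exactly when no nonzero $f$ is orthogonal to every $v\in E$). Second, writing $\Omega'=\{i\in\Omega:b_i\neq 0\}$, the vectors $D^jb_i$ with $b_i=0$ all vanish, so $\Span(E)=\sum_{i\in\Omega'}\Span\{b_i,Db_i,\dots,D^Lb_i\}$, and the condition $\{D^{L+1}b_i:i\in\Omega\}\subset\Span(E)$ is the same as: $D^{L+1}b_i\in\Span(E)$ for every $i\in\Omega'$. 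Third, by the hypothesis on the projections together with Theorem~\ref{TAD0}, the set $\{D^jb_i:i\in\Omega,\ j=0,\dots,l_i\}$ spans $\C^d$; hence also $\sum_{i\in\Omega'}\Span\{D^jb_i:j\ge 0\}=\C^d$.

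The central step is the claim that $W:=\Span(E)$ equals $\C^d$ if and only if $W$ is $D$-invariant. One direction is trivial. For the other, suppose $DW\subset W$. For each $i\in\Omega'$ one has $b_i=D^0b_i\in E\subset W$, hence $D^jb_i\in W$ for all $j\ge 0$ by $D$-invariance, so $\Span\{D^jb_i:j\ge 0\}\subset W$; summing over $i\in\Omega'$ and invoking the third observation gives $\C^d\subseteq W$, that is $W=\C^d$.

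It remains to express $D$-invariance of $W$ through the stated condition. Since $D$ carries a span to the span of the images of the generators, $DW=\sum_{i\in\Omega'}\Span\{Db_i,D^2b_i,\dots,D^{L+1}b_i\}$, and each generator $D^{m+1}b_i$ with $0\le m\le L-1$ already lies in $E\subset W$; therefore $DW\subset W$ holds if and only if $D^{L+1}b_i\in W$ for every $i\in\Omega'$, which by the second observation is precisely $\{D^{L+1}b_i:i\in\Omega\}\subset\Span(E)$. Chaining the equivalences ``$E$ is a frame of $\C^d$'' $\iff$ ``$\Span(E)=\C^d$'' $\iff$ ``$\Span(E)$ is $D$-invariant'' $\iff$ ``$\{D^{L+1}b_i:i\in\Omega\}\subset\Span(E)$'' proves the theorem.

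I do not expect a genuine obstacle: this is elementary linear algebra once one recalls that a finite frame is nothing but a spanning set. The only points demanding a little care are that finite-frame/completeness equivalence and the bookkeeping with the zero vectors $b_i$; the one substantive input — that the cyclic subspaces $\Span\{D^jb_i:j\ge 0\}$ jointly fill $\C^d$ under the frame hypothesis on $\{P_j(b_i)\}$ — is supplied verbatim by Theorem~\ref{TAD0}.
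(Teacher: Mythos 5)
Your proof is correct and follows essentially the same route as the paper's proof of Theorem~\ref{tacul} (of which Theorem~\ref{TAL0} is the diagonal special case): the paper iterates the hypothesis to show that $\mathrm{span}(E)$ contains $D^{j}b_i$ for all $j$, which is exactly your $D$-invariance observation, and then invokes Theorem~\ref{TAD0}/\ref{tadcul} to conclude $\mathrm{span}(E)=\C^d$, the converse being trivial. Your explicit handling of the zero vectors $b_i$ and of the finite-frame/spanning equivalence is just a more detailed write-up of the same argument.
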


As a corollary, for our original problem \ref  {GenDysamp} we get 
\begin {theorem}\label{TAL}

Let  $A^\ast=B^{-1}DB$,  $L$ be any fixed integer, and let $\{b_i: i\in \Omega \}$ be a set of vectors in $\C^d$ such that $\{P_j(b_i): i \in \Omega\}$ form a frame of $P_j(\C^d)$, $j=1,\dots ,n$. Then $\{A^{\ast j}e_i: \; i\in \Omega,  \, j=0, \dots, L\}$ is a frame of $\C^d$ if and only if $\{D^{L+1}b_i: i \in \Omega\} \subset  span \big(\{D^jb_i: i\in \Omega\; , j=0, \dots, L\}\big)$. 

Equivalently any $f\in \C^d$ can be recovered from the  samples $$Y=\{f(i), Af(i), A^2f(i), \dots,A^{L}f(i):  i \in \Omega\},$$ if and only if $\{D^{L+1}b_i: i \in \Omega\} \subset  span \big(\{D^jb_i: i\in \Omega\; , j=0, \dots, L\}\big)$.
\end{theorem}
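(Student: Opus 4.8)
The plan is to deduce Theorem \ref{TAL} from Theorem \ref{TAL0} in exactly the same way Theorem \ref{TAD1} was deduced from Theorem \ref{TAD0}, namely via Lemma \ref{Eqv2-diag}. First I would recall that $A^\ast = B^{-1}DB$ and that $b_i$ denotes the $i$-th column of $B$, so that $A^{\ast j}e_i = B^{-1}D^j B e_i = B^{-1}D^j b_i$. Since $B^{-1}$ is an invertible linear operator on $\C^d$, and invertible linear operators carry frames to frames and spans to spans, the set $\{A^{\ast j}e_i : i\in\Omega,\ j=0,\dots,L\}$ is a frame of $\C^d$ if and only if $\{D^j b_i : i\in\Omega,\ j=0,\dots,L\}$ is a frame of $\C^d$, and likewise the containment $\{D^{L+1}b_i : i\in\Omega\}\subset \Span\{D^j b_i : i\in\Omega,\ j=0,\dots,L\}$ is preserved and reflected under $B^{-1}$. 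This reduces the first equivalence to Theorem \ref{TAL0}.

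Next I would verify that the hypotheses match. Theorem \ref{TAL0} requires that $\{P_j(b_i):i\in\Omega\}$ be a frame of $P_j(\C^d)$ for each $j=1,\dots,n$, which is precisely the hypothesis carried over into Theorem \ref{TAL}. One small bookkeeping point: Theorem \ref{TAL0} phrases the frame $E$ as the union over $\{i\in\Omega : b_i\neq 0\}$ of $\{b_i,Db_i,\dots,D^L b_i\}$, whereas Theorem \ref{TAL} writes $\Span\{D^j b_i : i\in\Omega,\ j=0,\dots,L\}$ over all of $\Omega$. These agree because the indices $i$ with $b_i=0$ contribute only the zero vector to both the spanning set and to $\{D^{L+1}b_i\}$, hence may be harmlessly included or excluded; I would note this in one line. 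With that, $E$ being a frame of $\C^d$ is equivalent to $\{D^j b_i : i\in\Omega,\ j=0,\dots,L\}$ being a frame of $\C^d$, and Theorem \ref{TAL0} gives the stated criterion $\{D^{L+1}b_i : i\in\Omega\}\subset\Span\big(\{D^j b_i : i\in\Omega,\ j=0,\dots,L\}\big)$.

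Finally, the "equivalently" clause about recovering $f$ from $Y=\{f(i),Af(i),\dots,A^L f(i): i\in\Omega\}$ follows immediately: by Lemma \ref{Eqv1} (in its finite-dimensional form, with all $l_i=L$), stable recovery of $f$ from these samples is equivalent to $\{A^{\ast j}e_i : i\in\Omega,\ j=0,\dots,L\}$ being a frame of $\C^d$, and in finite dimensions a frame is exactly a spanning set, so "stable" is automatic. This closes the chain of equivalences. I do not anticipate a genuine obstacle here; the only thing requiring care is the transport-of-structure argument — making sure that both the frame property and the span containment are genuinely two-sided under the invertible change of basis $B$ — together with the trivial-column bookkeeping mentioned above. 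Everything else is a direct citation of Theorem \ref{TAL0} and Lemma \ref{Eqv2-diag}.
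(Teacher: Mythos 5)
Your proposal is correct and follows the paper's own route: Theorem \ref{TAL} is stated in the paper precisely as a corollary of Theorem \ref{TAL0} obtained through the change of basis of Lemma \ref{Eqv2-diag} (equivalently Lemma \ref{Eqv1}), with the underlying iteration argument, sketched after the proof of Theorem \ref{tacul}, already absorbed into Theorem \ref{TAL0}, which you are entitled to cite. Your bookkeeping about indices with $b_i=0$ is harmless but in fact vacuous here, since the $b_i$ are columns of the invertible matrix $B$ and hence nonzero.
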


\begin{figure}[h]
	\centering
		\includegraphics[scale=0.36]{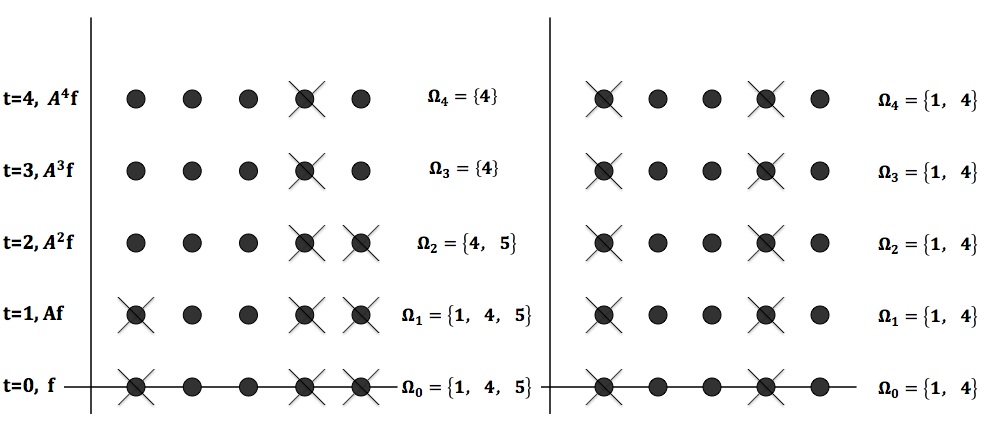}
	\caption{Illustration of a time-space sampling pattern. Crosses correspond to  time-space sampling points. Left panel: $\Omega=\Omega_0=\{1,4,5\}$. $l_1=1, l_4=4,l_5=3$. Right panel:  $\Omega=\Omega_0=\{1,4\}$. $L=4$.}
	\label{fig1}
\end{figure}

Examples where $L < d$, while $l_i=d$ for all $ i \in \Omega$ can be found in \cite {ADK13}.

Theorems \ref {TAD1} and \ref{TAL} will be consequences of our general results but we state them here to help the comprehension of the general results below.

\subsection {General linear transformations}
\label {GLT}

For a general matrix  we will need to use the reduction to its Jordan form. To state our results in this case, we need to introduce some notations
and describe the general Jordan form of a matrix with complex entries.   (For these and other results about matrix or linear transformation decompositions see for example \cite{HK71}.)

A matrix $J$ is in Jordan form if 
\begin {equation}\label {AJord}
J=
 \begin{pmatrix}
 
J_1 & 0 & \cdots & 0 \\
  0 & J_2 & \cdots & 0 \\
  \vdots  & \vdots  & \ddots & \vdots  \\
  0 & 0 & \cdots & J_n
 \end{pmatrix}.
 \end {equation}
In \eqref {AJord}, for $s = 1, \dots, n$, $J_s = \lambda_s I_s + N_s$ where $I_s$ is an $h_s\times h_s$ identity matrix, and $N_s$ is a $h_s \times h_s$ nilpotent block-matrix of the form:
\begin {equation}\label {Nilp}
N_s =
 \begin{pmatrix}
   N_{s1} & 0 & \cdots & 0 \\
  0 & N_{s2} & \cdots & 0 \\
  \vdots  & \vdots  & \ddots & \vdots  \\
  0 & 0 & \cdots & N_{s \gamma_s}
 \end{pmatrix}  \end {equation} 
where each $N_{si}$ is a $t^{s}_i \times t^{s}_i$ cyclic nilpotent matrix,  
\begin {equation}\label {cyc}
N_{si} \in \C^{t^s_i\times t^s_i}, \quad N_{si} = 
\begin{pmatrix}
0 & 0 & 0 & \cdots & 0 & 0\\
1 & 0 & 0 & \cdots & 0 & 0\\
0 & 1 & 0 & \cdots & 0 & 0\\
\vdots  & \vdots & \ddots  & \vdots & \vdots  & \vdots\\
0 & 0 & 0 & \cdots & 1 & 0
\end{pmatrix},
\end {equation}
with  $t^{s}_1\geq t^{s}_2 \geq \dots$, and  $t^{s}_1 + t^{s}_2 + \dots + t^s_{\gamma_s} = h_s$. Also $h_1 + \dots + h_n = d$. 
The matrix $J$ has $d$ rows and distinct eigenvalues $\lambda_j, j = 1, \dots, n$. 

Let $k_j^s$ denote the index corresponding to the first row of the block $N_{sj}$ from the matrix $J$, and let    $e_{k_j^s}$ be the corresponding element of the standard basis of $\C^d$. (That is a cyclic vector associated to that block). 
We also define $W_s := \text{span} \{e_{k^s_j}: j=1,\dots, \gamma_s\}$, for $s=1, \dots,n$, and $P_s$ will again denote the orthogonal projection onto $W_s$. Finally, recall that the {\em $J$ annihilator $q^J_b$ of a vector $b$} is the monic polynomial of smallest degree, such that $q_b^J(J)b \equiv 0$. Using the notations and definitions above we can state the following theorem:
\begin {theorem}
\label {tadcul}
Let $J$ be a matrix in Jordan form,  as in  \eqref{AJord}. 
Let $ \Omega \subset \{1, \dots, d\}$ and $\{b_i: i \in \Omega\}$ be a subset of vectors of $\C^d$, $r_i$ be the degree of  the $J$-annihilator of the vector $b_i$ and let $l_i=r_i-1$. 

Then the following propositions are equivalent.
\begin{enumerate}

\item[i)]
The set of vectors $\{J^jb_i: \; i \in \Omega, j=0,\dots, \li\}$ is a frame for $\C^d$.
\item[ii)]
For every $s = 1, \dots, n$, \;$\{P_s(b_i), i \in \Omega\}$ form a frame of $W_s.$
\end{enumerate}
\end{theorem}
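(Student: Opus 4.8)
The plan is to reduce the general Jordan-form statement to the diagonalizable case already handled in Theorem~\ref{TAD0}, by exploiting the fact that frames are preserved by invertible maps and that the relevant structure on each Jordan block is governed by its cyclic nilpotent pieces. First I would observe that since $J = \bigoplus_s J_s$ is block-diagonal with blocks corresponding to distinct eigenvalues $\lambda_s$, and since each $J_s = \lambda_s I_s + N_s$, the span of $\{J^j b_i : j = 0, \dots, l_i\}$ decomposes in a way compatible with the spectral projections. Writing $\C^d = \bigoplus_s V_s$ where $V_s$ is the generalized eigenspace for $\lambda_s$ (so $J$ acts on $V_s$ as $\lambda_s I + N_s$), the key point is that for a single vector $b$ with components $b = \sum_s b^{(s)}$, the cyclic subspace $\mathrm{span}\{J^j b : j \ge 0\}$ equals $\bigoplus_s \mathrm{span}\{(\lambda_s I + N_s)^j b^{(s)} : j \ge 0\} = \bigoplus_s \mathrm{span}\{N_s^j b^{(s)} : j \ge 0\}$, the last equality because $\lambda_s I + N_s$ and $N_s$ generate the same cyclic subspace (one is a polynomial in the other with nonzero constant term shifted appropriately — more precisely, since $\lambda_s \ne 0$ can be arranged or handled directly, $\{(\lambda_s I + N_s)^j\}$ and $\{N_s^j\}$ have the same linear span up to any fixed degree by a Vandermonde / binomial argument).

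Next I would analyze the cyclic nilpotent structure within a single generalized eigenspace $V_s$, which decomposes as $V_s = \bigoplus_{j=1}^{\gamma_s} U_{sj}$ where $U_{sj}$ is the $t_j^s$-dimensional cyclic subspace on which $N_s$ acts as the shift $N_{sj}$, with cyclic vector $e_{k_j^s}$. For a vector $v \in V_s$, write $v = \sum_j v_{sj}$ with $v_{sj} \in U_{sj}$, and note that $P_s(v)$ picks out precisely the ``leading'' coordinates, i.e. the coefficients of the cyclic generators $e_{k_j^s}$. The crucial linear-algebra lemma I need is: a collection $\{b_i : i \in \Omega\}$ in $V_s$ has the property that $\{N_s^j b_i : i \in \Omega, j = 0, \dots, l_i^{(s)}\}$ spans $V_s$ (equivalently, is a frame of $V_s$, since $V_s$ is finite-dimensional) \emph{if and only if} $\{P_s(b_i) : i \in \Omega\}$ spans $W_s$. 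The forward direction follows because applying $N_s$ strictly lowers the ``top degree'' in each block, so spanning all of $V_s$ forces the top-level projections to already span $W_s$; the reverse direction is proved by downward induction on the nilpotency level, showing that if the leading coefficients span $W_s$ then successive applications of $N_s$ to the $b_i$ fill in all lower levels, using that within the cyclic block $U_{sj}$ the vectors $N_{sj}^k e_{k_j^s}$, $k = 0, \dots, t_j^s - 1$, form a basis. Here one must be careful that the $D$-annihilator degree $r_i - 1 = l_i$ is exactly large enough: $l_i$ equals the maximal length, over all blocks, of the ``tail'' of $b_i$ below its top nonzero entry, so iterating $N_s$ up to $l_i$ times does reach everything $b_i$ can generate.

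Finally I would assemble the pieces: combining the eigenspace decomposition with the per-eigenspace lemma, $\{J^j b_i : i \in \Omega, j = 0,\dots,l_i\}$ is a frame for $\C^d = \bigoplus_s V_s$ if and only if, for each $s$, its $V_s$-components form a frame of $V_s$, which by the lemma holds if and only if $\{P_s(b_i) : i \in \Omega\}$ is a frame of $W_s$ — giving (i) $\iff$ (ii). The main obstacle I anticipate is the bookkeeping in the per-eigenspace lemma: one must simultaneously handle several cyclic blocks $N_{sj}$ of \emph{different} sizes $t_1^s \ge t_2^s \ge \cdots$ within a single $N_s$, and verify that the single degree bound $l_i$ (coming from the global $J$-annihilator of $b_i$, not a block-wise quantity) suffices to generate the full cyclic subspace of $b_i$ inside $V_s$; getting the induction on nilpotency level to interact correctly with blocks of unequal length, and checking that the leading-coefficient map is precisely $P_s$ as defined via $W_s = \mathrm{span}\{e_{k_j^s}\}$, is where the real care is needed. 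The passage from $N_s$ back to $J_s = \lambda_s I + N_s$ is routine given the cyclic-subspace invariance noted above, and the reduction from a general matrix $A$ to its Jordan form $J$ is exactly Lemma~\ref{Eqv2-diag}.
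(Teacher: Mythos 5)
Your outline is correct, and although it shares the paper's overall architecture (reduce to the Jordan matrix, split $\C^d$ into the generalized eigenspaces $V_s$, and detect everything through the projections onto the span $W_s$ of the cyclic generators), it replaces both of the paper's key mechanisms by different ones. For the assembly across eigenvalues you invoke the identity $\mathrm{span}\{J^jb_i : j\ge 0\}=\bigoplus_s \mathrm{span}\{N_s^jb_i^{(s)} : j\ge 0\}$; this is true, but it is exactly where distinctness of the $\lambda_s$ enters: $b_i^{(s)}$ lies in the cyclic subspace of $b_i$ because the factors $(x-\lambda_s)^{m_s}$ of the $J$-annihilator are coprime (Chinese Remainder / primary decomposition), and the identity fails without that, so it should be stated and justified explicitly. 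The paper instead proves the multi-eigenvalue case by induction on the number of coprime factors, using Lemma \ref{lem-u} and the replacement trick with the polynomials $\beta_s(x)=x^sq^{T_1}_{c_i}(x)-x^{m_i+s}$; your route is shorter once the CRT fact is recorded. For the per-eigenspace step, the paper proves more than you need: Theorem \ref{Jordan-1block} extracts an actual \emph{basis} $\{N^jb_i: j\le t_i-1\}$ after reordering via the perfect-matrix Lemma \ref{perfect}, which is why it must do the delicate bookkeeping with blocks of unequal sizes that you (rightly) flag as the hard point of your sketch. But since in finite dimensions a frame is just a spanning set, your lemma only needs spanning, and the downward induction you gesture at closes cleanly with no per-block bookkeeping: letting $M_s$ be the ($N_s$-invariant) span of all iterates $N_s^jb_i^{(s)}$, the hypothesis that $\{P_sb_i\}$ spans $W_s$ gives $V_s=M_s+N_sV_s$ (because $V_s=W_s\oplus N_sV_s$ and $b_i^{(s)}\equiv P_sb_i$ modulo $N_sV_s$), and applying $N_s$ repeatedly yields $V_s=M_s+N_s^kV_s$ for every $k$, hence $V_s=M_s$ once $k$ reaches the nilpotency order; the converse direction is your observation that $P_sN_s=0$, and the truncation at $l_i=r_i-1$ is harmless since by definition of the annihilator the powers up to $l_i$ already span the full cyclic subspace. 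With these two points written out, your argument is a complete and somewhat more economical proof of the equivalence; what the paper's heavier argument buys in exchange is the finer conclusion that an explicit basis of $\C^d$ can be selected from among the iterates.
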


Now, for a general matrix $A$, using Lemma \ref{Eqv2-diag} we can state:

\begin {corollary}\label {tadcul-cor}
Let $A$ be a matrix, such that $A^* = B^{-1}JB$, where $J \in \C^{d\times d}$ is the Jordan matrix for $A^*$. Let $\{b_i: i \in \Omega\}$ be a subset of the column vectors of $B$, $r_i$ be the degree of  the $J$-annihilator of the vector $b_i$, and  let $\li=r_i-1$.\\
Then, every $f \in \C^d$  can be recovered from the measurement set $Y=\{(A^jf)(i): \; i \in \Omega, j=0,\dots, \li\}$ of $\C^d$ if and only if $\{P_s(b_i), i \in \Omega\}$ form a frame of $W_s$. 
\end{corollary}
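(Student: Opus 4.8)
The plan is to derive Corollary \ref{tadcul-cor} by composing the three equivalences already in hand, so that no new frame-theoretic argument is needed; the only point demanding a short verification is that the sampling depth $\li$ is transported faithfully across the similarity $A^\ast = B^{-1}JB$.

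First I would apply Lemma \ref{Eqv1} with $I=\{1,\dots,d\}$: every $f\in\C^d$ can be recovered from $Y=\{(A^jf)(i): i\in\Omega,\ j=0,\dots,\li\}$ if and only if the set $\{A^{\ast j}e_i : i\in\Omega,\ j=0,\dots,\li\}$ is a frame for $\C^d$. (In finite dimensions this frame condition is simply that the vectors span $\C^d$, so the stability constants of \eqref{eqnorm} are automatic; I would nonetheless keep the frame terminology to align with the cited statements.) Next I would transport this condition through the invertible matrix $B^{-1}$ exactly as in the reduction preceding Lemma \ref{Eqv2-diag}, taking $Q=J$. Since $A^{\ast j}=B^{-1}J^jB$ and $Be_i=b_i$, we have $A^{\ast j}e_i = B^{-1}J^j b_i$; as $B^{-1}$ is invertible it sends frames to frames and non-frames to non-frames, so $\{A^{\ast j}e_i : i\in\Omega,\ j=0,\dots,\li\}$ is a frame of $\C^d$ if and only if $\{J^j b_i : i\in\Omega,\ j=0,\dots,\li\}$ is a frame of $\C^d$.

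The one step I expect to require care is matching the index $\li$ on the two sides, since Theorem \ref{tadcul} is stated with $\li=r_i-1$ for $r_i$ the degree of the $J$-annihilator of $b_i$, whereas the corollary as phrased builds $Y$ from the $A^\ast$-side. For any polynomial $p$ one has $p(A^\ast)e_i = B^{-1}p(J)Be_i = B^{-1}p(J)b_i$, and injectivity of $B^{-1}$ gives $p(A^\ast)e_i=0$ if and only if $p(J)b_i=0$. Hence the $A^\ast$-annihilator of $e_i$ and the $J$-annihilator of $b_i$ are the same monic polynomial, so in particular they share the degree $r_i$ and the two formulations use the identical value $\li=r_i-1$. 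With the indices reconciled, I would invoke Theorem \ref{tadcul} for the Jordan matrix $J$ and the vectors $\{b_i : i\in\Omega\}$ to conclude that $\{J^j b_i : i\in\Omega,\ j=0,\dots,\li\}$ is a frame of $\C^d$ if and only if $\{P_s(b_i): i\in\Omega\}$ is a frame of $W_s$ for every $s=1,\dots,n$. Chaining the three equivalences delivers the asserted characterization, so the corollary is essentially a bookkeeping composition whose only substantive ingredient is the annihilator identity above.
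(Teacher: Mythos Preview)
Your proposal is correct and follows the same route as the paper: the corollary is obtained by combining Lemma~\ref{Eqv1} and the similarity reduction of Lemma~\ref{Eqv2-diag} with Theorem~\ref{tadcul}, and the paper does not spell out any further argument. Your explicit verification that the $A^\ast$-annihilator of $e_i$ coincides with the $J$-annihilator of $b_i$ (so that $\li$ is the same on both sides) is a welcome clarification that the paper leaves implicit.
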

In other words, we will be able to recover $f$ from the measurements $Y$, if and only if the {\em Jordan}-vectors of $A^*$  (i.e. the columns of the matrix $B$ that reduces $A^*$ to its Jordan form) corresponding to $\Omega$ satisfy that their projections on the spaces $W_s$ form a frame.

\begin{remark}
We want to emphasize at this point, that given a matrix in Jordan form there is an obvious choice 
of vectors in order that their iterations give a frame of the space,  (namely, the cyclic vectors $e_{k^s_j}$
corresponding to each block). However, we are dealing here with a much more difficult problem. The vectors $b_i$ are given beforehand, and we need to find  conditions in order to decide if their iterations form a frame.
\end{remark}
The following theorem is just a statement about replacing the {\em optimal} iteration of each vector $b_i$ by any fixed number of iterations. The idea is, that we iterate a fixed number of times $L$   but we do not need to know the  degree $r_i$ of the $J$-annihilator for each $b_i$. Clearly, if $L\ge \max\{r_i-1: i \in \Omega\}$ then we can always recover any $f$ from $Y$. But the number of time iterations $L$  may be smaller than any $r_i-1$,  $i \in \Omega$. In fact, for practical purposes it might be better to iterate, than to try to figure out which is the degree of the annihilator for $b_i$.

\begin {theorem}\label {tacul}
Let $J \in \C^{d\times d}$ be a matrix in Jordan form (see \eqref{AJord}).
Let $\Omega \subset \{1, \dots, d\}$, and let $\{b_i: i \in \Omega\}$ be a set of vectors in $\C^d$, such that for each $s = 1, \dots, n$ the projections $\{P_s(b_i): i \in \Omega\}$ onto $W_s$ form a frame of $W_s$. 
Let $L$ be any fixed integer, then $E=\bigcup\limits_{\{i\in \Omega: b_i\neq 0\}} \{b_i,Jb_i,\dots, J^{L}b_i\}$ is a frame of $\C^d$ if and only if $\{J^{L+1}b_i: i \in \Omega\} \subset span (E)$.
\end{theorem}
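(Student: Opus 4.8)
The plan is to deduce the statement from Theorem~\ref{tadcul} together with elementary facts about cyclic subspaces, after noticing that one of the two implications is essentially vacuous. Write $l_i=r_i-1$ as in Theorem~\ref{tadcul} and set $E_0:=\{J^j b_i:i\in\Omega,\ j=0,\dots,l_i\}$, the ``optimal'' system of that theorem. Since $\C^d$ is finite dimensional, ``frame'' means ``spanning set'' throughout, so I would work with $\Span(E)$ and $\Span(E_0)$. The first observation is that the hypothesis ``$\{P_s(b_i):i\in\Omega\}$ is a frame of $W_s$ for every $s$'' is precisely condition (ii) of Theorem~\ref{tadcul}, so that theorem immediately gives $\Span(E_0)=\C^d$.

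For the forward implication there is nothing to prove: if $E$ is a frame of $\C^d$ then $\Span(E)=\C^d$, so $J^{L+1}b_i\in\Span(E)$ for every $i\in\Omega$ automatically. The whole content is therefore the converse.

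For the converse, assume $J^{L+1}b_i\in\Span(E)$ for all $i\in\Omega$. The key step I would carry out first is to show that $\Span(E)$ is $J$-invariant. Since $\Span(E)=\Span\{J^j b_i: i\in\Omega,\ b_i\ne 0,\ j=0,\dots,L\}$, applying $J$ to these generators produces exactly the vectors $J^j b_i$ with $1\le j\le L+1$; those with $j\le L$ lie in $\Span(E)$ by definition, and those with $j=L+1$ lie in $\Span(E)$ by assumption, whence $J\cdot\Span(E)\subseteq\Span(E)$. Next I would invoke the standard fact that the smallest $J$-invariant subspace of $\C^d$ containing a vector $b$ is the cyclic subspace $\Span\{b,Jb,\dots,J^{r-1}b\}$, where $r=\deg q^J_b$. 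Applying this with $b=b_i$ and using that $\Span(E)$ is a $J$-invariant subspace containing $b_i$, I get $J^j b_i\in\Span(E)$ for $j=0,\dots,l_i$, i.e. $E_0\subseteq\Span(E)$. Combined with $\Span(E_0)=\C^d$ from the first paragraph, this forces $\Span(E)=\C^d$, so $E$ is a frame of $\C^d$, as desired.

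I do not expect a genuine obstacle here: the only serious input is Theorem~\ref{tadcul}, and the remaining ingredient is a textbook fact about cyclic subspaces of a linear map. The one point requiring a little care is the bookkeeping in the $J$-invariance step — one must check that applying $J$ to a generator of $\Span(E)$ never produces an iterate $J^j b_i$ with $j>L+1$, so that the hypothesis $J^{L+1}b_i\in\Span(E)$ is exactly what closes the argument; one should also note in passing that the index set $\{i\in\Omega:b_i\ne0\}$ cannot be empty, since each $W_s\ne\{0\}$ and hence its frame $\{P_s(b_i):i\in\Omega\}$ cannot consist of zero vectors only.
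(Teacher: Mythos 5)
Your proof is correct and takes essentially the same route as the paper: the forward direction is trivial, and for the converse both arguments propagate the hypothesis $J^{L+1}b_i\in\Span(E)$ to all higher powers $J^jb_i$ (the paper by iterating the inclusion directly, you by observing that $\Span(E)$ is $J$-invariant and quoting the cyclic-subspace fact), and then both invoke Theorem~\ref{tadcul} to conclude $\Span(E)=\C^d$. The difference is only in packaging, not in substance.
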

As a corollary we immediately get the solution to Problem \ref {GenDysamp} in finite dimensions.

\begin {corollary}
Let $\Omega \subset I$, $A^\ast=B^{-1}JB$,  and $L$ be any fixed integer. 
Assume that  $\{P_s(b_i): i \in \Omega\}$ form a frame of $W_s$ and set  $E=\{J^sb_i: i \in \Omega, s=0, \dots, L,\}$.
Then any $f\in \C^d$ can be recovered from the  samples 
$Y=\{f(i), Af(i), A^2f(i), \dots,A^{L}f(i):  i \in \Omega\}$, if and only if $\{J^{L+1}b_i: i \in \Omega\} \subset \text{span} ( E\}).$
\end {corollary}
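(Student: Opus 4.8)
The plan is to derive this corollary directly from Theorem~\ref{tacul} by chaining it with the two reduction lemmas of the introduction and the start of this section; there is no genuinely new content, only a careful transcription. I would organize it in three steps.

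First I would record the fixed-$L$ analogue of Lemma~\ref{Eqv1}. The computation there — expanding $\|\mathcal S f\|_2^2$ in the standard basis $\{e_i\}$ and reading the two-sided estimate \eqref{eqnorm} as a frame inequality — never uses that the upper index depends on $i$. Taking $l_i=L$ for every $i\in\Omega$, one gets: every $f\in\C^d$ can be recovered from $Y=\{f(i),Af(i),\dots,A^Lf(i):i\in\Omega\}$ if and only if $\{A^{\ast j}e_i:i\in\Omega,\ j=0,\dots,L\}$ is a frame for $\C^d$. (In finite dimensions the word ``stable'' is automatic, since an injective linear map on $\C^d$ is bounded below.) Next I would apply Lemma~\ref{Eqv2-diag} with the given invertible $B$ and $Q=J$, so that $A^\ast=B^{-1}JB$ and the $b_i$ are the columns of $B$ indexed by $\Omega$; because invertible linear maps carry frames to frames, $\{A^{\ast j}e_i:i\in\Omega,\ j=0,\dots,L\}$ is a frame for $\C^d$ if and only if $E=\{J^jb_i:i\in\Omega,\ j=0,\dots,L\}$ is a frame for $\C^d$.

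Finally I would invoke Theorem~\ref{tacul}: the standing hypothesis of the corollary is precisely that $\{P_s(b_i):i\in\Omega\}$ forms a frame of $W_s$ for each $s=1,\dots,n$, so Theorem~\ref{tacul} applies verbatim and yields that $E$ is a frame of $\C^d$ if and only if $\{J^{L+1}b_i:i\in\Omega\}\subset\Span(E)$. Combining the three equivalences — recoverability of $f$ from $Y$ $\iff$ $\{A^{\ast j}e_i\}$ a frame $\iff$ $E$ a frame $\iff$ $\{J^{L+1}b_i:i\in\Omega\}\subset\Span(E)$ — gives the claim.

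The only point that deserves a sentence of care, and the closest thing to an ``obstacle'', is that Lemmas~\ref{Eqv1} and~\ref{Eqv2-diag} are stated with the variable cut-offs $l_i$ rather than a common $L$; one should note explicitly that their proofs are insensitive to this change, so that they may be quoted with $l_i$ replaced by the constant $L$. Everything else is bookkeeping.
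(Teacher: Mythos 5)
Your proposal is correct and follows essentially the same route as the paper, which presents this corollary as an immediate consequence of Theorem~\ref{tacul} combined with the reduction Lemmas~\ref{Eqv1} and~\ref{Eqv2-diag} (recoverability $\iff$ $\{A^{\ast j}e_i\}$ a frame $\iff$ $\{J^jb_i\}$ a frame, then apply Theorem~\ref{tacul}). Your added remark that the lemmas, though stated with variable cut-offs $l_i$, apply verbatim with the constant $L$ is a fair point of care but does not change the argument.
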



\subsection{Proofs}
\ 

In order to introduce some needed notations, we first recall the standard decomposition of a linear transformation acting on a
finite dimensional vector space that produces a basis for the Jordan form.

Let $V$ be a finite dimensional vector space of dimension $d$ over $\C$  and let $T:V \longrightarrow V$ be a linear transformation.
The characteristic polynomial of $T$ factorizes as $\chi_T(x) = (x-\lambda_1)^{h_1} \dots (x-\lambda_n)^{h_n}$ where $h_i \geq 1$ and  $\lambda_1,\dots,\lambda_n$  are distinct elements of $\C$.
The minimal polynomial of $T$ will be then $m_T(x) = (x-\lambda_1)^{r_1}\dots (x-\lambda_n)^{r_n}$ with $1 \leq r_i \leq h_i$ for $i=1, \dots, n$. By the primary decomposition theorem, the subspaces $V_s = \text{Ker}(T-\lambda_s I)^{r_s}, \; s=1, \dots, n$  are invariant under $T$ (i.e. $T(V_s) \subset V_s$) and we have also that $V = V_1\oplus \dots \oplus V_n$.

Let  $T_s$ be the restriction of $T$ to $V_s$.  Then, the minimal polynomial of $T_s$ is $(x-\lambda_s)^{r_s}$,
and $T_s = N_s+\lambda_sI_s$, where $N_s$ is nilpotent of order $r_s$ and  $I_s$ is the identity operator on $V_s$.
Now for each $s$ we apply the cyclic decomposition to $N_s$ and the space $V_s$ to obtain:
$$
V_s = V_{s1}\oplus \dots \oplus V_{s\gamma_s} 
$$
where each $V_{sj}$ is invariant under $N_s$, and the restriction operator $N_{sj}$ of  $N_s$ to $V_{sj} $ is a {\em cyclic nilpotent } operator
on $V_{sj} $. 

Finally, let us fix for each $j$ a cyclic vector $w_{sj}\in V_{sj} $  and define the subspace $W_s = \text{span}\{w_{s1}\dots w_{s\gamma_s}\}$,
$W=W_1\oplus \dots \oplus W_n$ and let $P_{W_s}$ be the projection onto $W_s$, with $I_W = P_{W_1} +\dots+ P_{W_n}.$

With this notation we can state the main theorem of this section:

\begin{theorem}\label{Jordan-N}
Let $\{b_i: i \in \Omega\}$ be a set of vectors in $V$. If the set $\{P_{W_s}b_i : i \in \Omega\}$ is complete in $W_s$ for each $ s= 1,\dots, n$, then the set $\{b_i, Tb_i,\dots,T^{\li}b_i: i\in \Omega\} $ is a frame of $V$, where $r_i$ is the degree of the $T$-annihilator of $b_i$ and $\li=r_i-1$.
\end{theorem}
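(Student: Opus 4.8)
The plan is to exploit that $V$ is finite dimensional, so that a finite family of vectors is a frame for $V$ precisely when it spans $V$ (the upper bound is automatic, and the lower bound is positive exactly when no nonzero vector of $V$ is orthogonal to the whole family). Hence it suffices to prove that
\[
U:=\Span\{T^{j}b_{i}:\ i\in\Omega,\ 0\le j\le \li\}=V .
\]
Here the choice $\li=r_{i}-1$, with $r_{i}=\deg q_{b_{i}}^{T}$ the degree of the $T$-annihilator of $b_{i}$, is used in an essential way: it guarantees $T^{r_{i}}b_{i}\in\Span\{b_{i},Tb_{i},\dots,T^{r_{i}-1}b_{i}\}$, so that $\Span\{T^{j}b_{i}:0\le j\le \li\}$ is exactly the full $T$-cyclic subspace $Z(b_{i};T):=\Span\{p(T)b_{i}:p\in\C[x]\}$ generated by $b_{i}$. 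Thus $U=\sum_{i\in\Omega}Z(b_{i};T)$, and in particular $U$ is $T$-invariant.

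Next I would split along the primary decomposition $V=V_{1}\oplus\cdots\oplus V_{n}$, $V_{s}=\operatorname{Ker}(T-\lambda_{s}I)^{r_{s}}$. The primary projections $\pi_{s}\colon V\to V_{s}$ are polynomials in $T$, so $U$ being $T$-invariant gives $\pi_{s}(U)\subseteq U$, and since $\sum_{s}\pi_{s}=I$ we get $U=\bigoplus_{s}\pi_{s}(U)=\bigoplus_{s}(U\cap V_{s})$. Writing $b_{i}^{(s)}:=\pi_{s}b_{i}$, $T_{s}:=T|_{V_{s}}$, and using that $\pi_{s}$ commutes with $T$, one checks $\pi_{s}\bigl(Z(b_{i};T)\bigr)=Z(b_{i}^{(s)};T_{s})$, whence $U\cap V_{s}=\sum_{i\in\Omega}Z(b_{i}^{(s)};T_{s})=:U_{s}$. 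So the theorem reduces to showing $U_{s}=V_{s}$ for each $s$. Since the nilpotent operator $N_{s}:=T_{s}-\lambda_{s}I_{s}$ and $T_{s}$ generate the same algebra of operators on $V_{s}$, we have $Z(b_{i}^{(s)};T_{s})=Z(b_{i}^{(s)};N_{s})$, so $U_{s}=\sum_{i}Z(b_{i}^{(s)};N_{s})$ is an $N_{s}$-invariant subspace of $V_{s}$, and everything is now reduced to a statement about the single nilpotent $N_{s}$.

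For this nilpotent step I would first use the cyclic decomposition $V_{s}=V_{s1}\oplus\cdots\oplus V_{s\gamma_{s}}$ with cyclic vectors $w_{sj}$: since each $V_{sj}$ has basis $w_{sj},N_{s}w_{sj},\dots,N_{s}^{t_{j}^{s}-1}w_{sj}$, one gets $V_{sj}=\C w_{sj}\oplus N_{s}V_{sj}$ and hence $V_{s}=W_{s}\oplus N_{s}V_{s}$, where $W_{s}=\Span\{w_{sj}:j\}$. Therefore the quotient map $V_{s}\to V_{s}/N_{s}V_{s}$ restricts to an isomorphism $W_{s}\xrightarrow{\ \sim\ }V_{s}/N_{s}V_{s}$, under which $P_{W_{s}}b_{i}=P_{W_{s}}b_{i}^{(s)}$ corresponds precisely to the class $b_{i}^{(s)}+N_{s}V_{s}$. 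The hypothesis says $\{P_{W_{s}}b_{i}:i\in\Omega\}$ spans $W_{s}$, so $\{b_{i}^{(s)}+N_{s}V_{s}:i\in\Omega\}$ spans $V_{s}/N_{s}V_{s}$; as $b_{i}^{(s)}\in U_{s}$, this says exactly $U_{s}+N_{s}V_{s}=V_{s}$. A Nakayama-type iteration then finishes: applying $N_{s}$ and using $N_{s}U_{s}\subseteq U_{s}$ gives $N_{s}V_{s}\subseteq U_{s}+N_{s}^{2}V_{s}$, so $V_{s}=U_{s}+N_{s}^{2}V_{s}$, and inductively $V_{s}=U_{s}+N_{s}^{m}V_{s}$ for all $m\ge1$; since $N_{s}$ is nilpotent, $N_{s}^{m}V_{s}=0$ for large $m$, so $V_{s}=U_{s}$. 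Summing over $s$ yields $U=V$.

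I expect the passage through the primary and cyclic decompositions to be essentially bookkeeping, resting on the structure theory already recalled together with the elementary facts that the primary projections are polynomials in $T$ and that passing from $T_{s}$ to $N_{s}$ does not change cyclic subspaces. The only real content, and the step I would regard as the heart of the argument, is the nilpotent case: recognizing that the completeness hypothesis on $P_{W_{s}}b_{i}$ is exactly the assertion that $U_{s}$ surjects onto $V_{s}/N_{s}V_{s}$, and then that the Nakayama iteration upgrades this surjectivity to $U_{s}=V_{s}$. A secondary point that must be handled carefully is the precise meaning of $P_{W_{s}}$ on all of $V$: it should be read as the composite $V\twoheadrightarrow V_{s}\twoheadrightarrow V_{s}/N_{s}V_{s}\cong W_{s}$, so that it records the class of $b_{i}$ modulo $N_{s}V_{s}$; without this identification the bridge between the hypothesis and the Nakayama step would be unavailable.
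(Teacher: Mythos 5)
Your proposal is correct, but it proves the theorem by a genuinely different route than the paper. The paper first treats the single-eigenvalue (nilpotent) case as a separate theorem: it selects $\gamma$ vectors whose $W$-projections form a basis, permutes them so that the coefficient matrix $\Theta$ is \emph{perfect} (all leading principal minors nonzero), and then shows by successively applying the projections $P_W, P_{NW},\dots$ that the $d$ iterates $\{b_i, Nb_i,\dots,N^{t_i-1}b_i\}$ are linearly independent — so it actually extracts a \emph{basis} of $V$ from prescribed numbers of iterates of $\gamma$ chosen vectors. The general case is then reduced to this by induction on the number of primary factors, via a delicate replacement argument: writing $b_i=c_i+d_i$, setting $u_i=q^{T_1}_{c_i}(T_2)d_i$, and substituting $T^{m_i+s}b_i+\beta_s(T)b_i$ for $T^{m_i+s}b_i$ so the span is unchanged while the iterates split into iterates of $c_i$ in $V_1$ and of $u_i$ in $V_2$, with an annihilator lemma guaranteeing no degrees are lost. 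You instead observe that the choice $\li=r_i-1$ makes $\Span\{T^jb_i: 0\le j\le \li\}$ equal to the full cyclic subspace $Z(b_i;T)$, reduce along the primary decomposition using that the primary projections are polynomials in $T$, and finish each nilpotent block with the Nakayama-type iteration $V_s=U_s+N_sV_s\Rightarrow V_s=U_s+N_s^mV_s\Rightarrow V_s=U_s$; your reading of $P_{W_s}$ as recording the class of $b_i$ modulo $N_sV_s$ (after projecting to $V_s$) is exactly how the paper uses it (its proof relies on $P_W(Nx)=0$). Your argument is shorter and avoids both the perfect-matrix combinatorics and the replacement/induction machinery; what the paper's proof buys in exchange is the finer constructive statement that a basis of $V$ can be formed from $t_i-1$ iterates of suitably selected vectors, which is in the spirit of its other finite-dimensional results though not needed for the theorem as stated. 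One small point to keep explicit: the frame conclusion from spanning uses that $\Omega$ (hence the whole family) is finite, as in the paper's setting $\Omega\subset\{1,\dots,d\}$.
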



To prove Theorem \ref{Jordan-N}, we will first concentrate on the case where the transformation $T$ has minimal polynomial consisting of a unique factor, 
i.e. $m_T(x) = (x-\lambda)^r$, so that $T = \lambda I_d + N$, and $N^r = 0$ but $N^{r-1} \not=0$.

\subsection{Case $T = \lambda I_d +N$}

\begin{remark} It is not difficult to see that, in this case, given   some $L \in \N$, $\{T^jb_i: \;  i\in \Omega, j=0,\dots, L \}$ is a frame for $V$
if and only if $\{N^jb_i: \; i\in \Omega, j=0,\dots, L  \}$ is a frame for $V$.
In addition, since $N^r b_i = 0$ we need only to iterate to $r-1$. In fact, we only need to iterate each $b_i$ to $l_i=r_i-1$ where $r_i$ is  the degree of the $N$ annihilator of $b_i$.
\end{remark}

\begin{definition} A matrix $A \in \C^{d\times d}$ is {\em \perfect} if $a_{ii}\not=0, i=1, \dots, d$ and $\text{det}(A_i) \not=0, i=1, \dots, d$ where $A_{s} \in \C^{s \times s}$ is the submatrix of $A$, $A_{s} = \{a_{i,j}\}_{i,j = 1, \dots, s}$.
\end{definition}

We need the following lemma that is straightforward to prove.
\begin{lemma} \label{perfect} Let $A \in \C^{d\times d}$ be an invertible matrix. Then there exists a \perfect\ matrix $B \in \C^{d\times d}$ that consists of  row (or column) permutations of $A$.
\end{lemma}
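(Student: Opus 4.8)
The plan is to prove the lemma by induction on the dimension $d$, peeling off one row and the last column at each stage. The main conceptual point to keep in mind is that the two requirements in the definition of a \perfect\ matrix---nonvanishing of every leading principal minor $\det(A_s)$ and nonvanishing of every diagonal entry $a_{ii}$---are logically independent: a matrix can have all of its leading principal minors nonzero and still carry a zero on its diagonal. Hence the induction must be arranged so that \emph{both} conditions are secured at once, and this reconciliation is the only real obstacle. I would overcome it by reducing the $d\times d$ problem to a $(d-1)\times(d-1)$ problem in such a way that the newly created bottom diagonal entry and the invertibility of the remaining block are delivered by a single application of Laplace's expansion.

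The base case $d=1$ is immediate: an invertible $1\times1$ matrix $(a_{11})$ has $a_{11}\neq0$ and is therefore already \perfect. For the inductive step, given an invertible $A\in\C^{d\times d}$ I would expand the determinant along the last column,
\[
\det(A)=\sum_{m=1}^d(-1)^{m+d}\,a_{md}\,M_{md},
\]
where $M_{md}$ is the minor obtained by deleting row $m$ and column $d$. Since $\det(A)\neq0$, not all summands vanish, so I can fix an index $m$ for which simultaneously $a_{md}\neq0$ and $M_{md}\neq0$. This is the crux of the argument: the first inequality supplies a row whose last entry is a legitimate candidate for the bottom diagonal entry, while the second guarantees that the complementary $(d-1)\times(d-1)$ matrix $A'$, formed from the remaining rows and the first $d-1$ columns, is invertible, because $\det(A')=M_{md}\neq0$.

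I would then apply the induction hypothesis to $A'$ to obtain a permutation $\tau$ of its rows making it \perfect, and define $B$ by placing the corresponding full rows of $A$ (all $d$ coordinates) into positions $1,\dots,d-1$ in the order dictated by $\tau$, with row $m$ placed into position $d$. Since the top-left $(d-1)\times(d-1)$ block of $B$ involves only columns $1,\dots,d-1$, it coincides with the \perfect\ rearrangement of $A'$; hence $\det(B_s)\neq0$ and $b_{ss}\neq0$ for $s=1,\dots,d-1$. Finally $b_{dd}=a_{md}\neq0$ by the choice of $m$, and $\det(B_d)=\det(B)=\pm\det(A)\neq0$ because $B$ is a row permutation of the invertible matrix $A$. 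Thus $B$ is \perfect, completing the induction. The column-permutation version follows by running the identical argument on $A^{\T}$ (equivalently, by expanding along the last row instead of the last column), using that transposition preserves both the leading principal minors and the diagonal entries, so that a row permutation making $A^{\T}$ \perfect\ transposes to a column permutation making $A$ \perfect.
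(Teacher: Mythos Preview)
Your inductive argument is correct. The key step---choosing $m$ via the Laplace expansion along the last column so that both $a_{md}\neq0$ and the complementary minor $M_{md}\neq0$---simultaneously secures the new diagonal entry and the invertibility of the $(d-1)\times(d-1)$ block to which the induction hypothesis is applied, and your verification that the resulting $B$ is \perfect\ is complete.

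The paper does not actually prove this lemma; it only remarks that it is ``straightforward to prove'' and moves on. Your proof supplies exactly what the paper omits, and the inductive scheme you use (peel off one row and the last column, invoke Laplace's expansion to find a row with nonzero last entry and nonzero complementary minor) is a standard and natural way to establish the result. One could alternatively argue via Gaussian elimination with partial pivoting to get nonvanishing leading principal minors, but that route does not automatically deliver nonzero diagonal entries, so your direct induction is in fact cleaner for the precise statement needed here.
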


If $N$ is nilpotent of order $r$, then there exist $\gamma \in \N$ and invariant subspaces $V_i \subset V$, $i=1, \dots, \gamma$ such that
\begin{equation*}
V = V_1 \oplus \dots \oplus V_\gamma , \quad \text{dim}(V_j) = t_j, \  t_j \geq t_{j+1}, \ j = 1, \dots, \gamma-1, \end{equation*}
and $N = N_1 + \dots +N_\gamma$, 
where $N_j = P_jNP_j$ is a cyclic nilpotent operator in $V_j$, $j = 1, \dots, \gamma$. Here $P_j$ is the projection onto $V_j$. Note that $t_1+\dots+t_\gamma = d$.

For each $j=1,\dots,\gamma$, let $w_j \in V_j$ be a cyclic vector for $N_j$. Note that the set $\{w_1, \dots, w_\gamma\}$ is a linearly independent set.

Let $W = \text{span}\{w_1, \dots, w_\gamma\}$. Then, we can write $V= W\oplus NW \oplus \dots \oplus N^{r-1} W$. Furthermore, the projections $ P_{N^jW}$ satisfy $P^2_{N^jW} = P_{N^jW}$, and $I = \sum_{j=0}^{r-1} P_{N^jW}$.

Finally, note that 
\begin{equation}\label{nilp}
N^sP_{W} = P_{N^sW} N^s.
\end{equation}

With the notation above, we have the following theorem:
\begin{theorem}\label{Jordan-1block} Let $N$ be a nilpotent operator on $V$. Let $B\subset V$ be a finite set of vectors such that $\{P_W(b): b \in B\}$ is complete in $W$. Then
\begin{equation*}
\bigcup _{b \in B} \left\{b, Nb, \dots, N^{l_b }b\right\} \quad \text{is a frame for }\  V,
\end{equation*}
where $l_b=r_b-1$ and $r_b$ is the degree of the $N$-annihilator of $b$.
\end{theorem}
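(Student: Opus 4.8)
The plan is to prove Theorem~\ref{Jordan-1block} directly, exploiting the decomposition $V = W \oplus NW \oplus \dots \oplus N^{r-1}W$ together with the intertwining relation \eqref{nilp}. Since $V$ is finite dimensional, "frame" is equivalent to "spanning set," so it suffices to show that $\bigcup_{b\in B}\{b, Nb, \dots, N^{l_b}b\}$ spans $V$. I would argue by showing that for each $s = 0, 1, \dots, r-1$ the subspace $N^s W$ is contained in the span of the iterates. Observe first that $N^s P_W$ maps $W$ onto $N^s W$ (each cyclic vector $w_j$ has $N^{t_j - 1} w_j \neq 0$, and the $N^s w_j$ with $t_j > s$ form a basis of $N^s W$), and by \eqref{nilp} we have $N^s P_W = P_{N^s W} N^s$. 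So if $\{P_W(b) : b \in B\}$ is complete in $W$, then applying $N^s$ and using the intertwining identity, $\{N^s P_W(b) : b \in B\} = \{P_{N^s W} N^s b : b\in B\}$ is complete in $N^s W$.

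The heart of the argument is then a downward induction on $s$, from $s = r-1$ to $s = 0$, establishing that $N^s W \subset \Span\bigcup_{b\in B}\{b, Nb, \dots, N^{l_b}b\}$. For the base case $s = r-1$: since $N^{r-1}$ is the top power, $P_{N^{r-1}W}$ acts as the identity on $N^{r-1}W$, so $\{N^{r-1} b : b \in B\}$ is already complete in $N^{r-1}W$; moreover $N^{r-1} b \neq 0$ forces $r_b = r$, i.e.\ $l_b = r - 1$, so each such $N^{r-1}b$ genuinely appears among the allowed iterates. For the inductive step, suppose $N^{s+1}W, \dots, N^{r-1}W$ all lie in the span $\mathcal{E}$ of the iterates. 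Take any $b \in B$. Decompose $b = \sum_{t=0}^{r-1} P_{N^t W} b$. Then $N^s b = \sum_{t=0}^{r-1} N^s P_{N^t W} b$; the terms with $t \geq 1$ land in $N^{s+1}W \oplus \dots \subset \mathcal{E}$ by the inductive hypothesis, so modulo $\mathcal{E}$ we have $N^s b \equiv N^s P_W b = P_{N^s W} N^s b$. Since $\{P_{N^s W} N^s b : b \in B\}$ spans $N^s W$ (previous paragraph), and each $N^s b$ we use satisfies $s \leq l_b$ (because $P_{N^s W} N^s b \neq 0$ implies $N^s b \neq 0$, hence $r_b > s$, hence $l_b = r_b - 1 \geq s$), we conclude $N^s W \subset \mathcal{E}$. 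Running the induction down to $s = 0$ gives $W \oplus NW \oplus \dots \oplus N^{r-1}W = V \subset \mathcal{E}$.

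The main obstacle — and the reason I expect the \perfect\ matrix machinery (Definition, Lemma~\ref{perfect}) is introduced — is the bookkeeping needed to make "complete in $W$ implies the right iterates appear" fully rigorous when $B$ contains vectors $b$ with $r_b < r$: one must be careful that when $P_{N^s W} N^s b \neq 0$ the index $s$ really is $\leq l_b$, and that the completeness in each $N^s W$ can be realized using only vectors $b$ for which this holds. The cleanest route is the observation above: $N^s b \neq 0 \iff s \leq r_b - 1 = l_b$, so the constraint is automatically compatible. Alternatively, one can order the vectors of $B$ and of a basis of $W$ so that the relevant coordinate matrix is \perfect, giving a triangular, nonsingular system that exhibits each $N^s W$ vector as a combination of genuinely-available iterates; Lemma~\ref{perfect} guarantees such an ordering exists after permutation. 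Either way, once this is pinned down the telescoping induction closes. Finally, Theorem~\ref{Jordan-N} follows by applying Theorem~\ref{Jordan-1block} on each primary component $V_s$ (where $T_s = \lambda_s I_s + N_s$), noting that the projections $P_{W_s}$ there are exactly the $P_W$ of the component, and assembling the per-component frames via $V = V_1 \oplus \dots \oplus V_n$.
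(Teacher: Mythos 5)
Your proof is correct, and it takes a genuinely different route from the paper's. The paper does not prove spanning directly: it selects $\gamma$ vectors $b_1,\dots,b_\gamma\in B$ whose projections $P_W(b_i)$ form a basis of $W$, writes $P_W(b_i)=\sum_j\theta_{ij}w_j$, invokes Lemma \ref{perfect} to reorder so that $\Theta$ is perfect, and then shows that the $d$ vectors $\bigcup_i\{b_i,Nb_i,\dots,N^{t_i-1}b_i\}$ are linearly independent by applying $P_W,P_{NW},\dots$ successively to a dependence relation, the perfectness of $\Theta$ (invertibility of the leading submatrices $\Theta_{p_s}$) killing the coefficients level by level; this exhibits an actual basis of $V$ inside the proposed family. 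You instead prove completeness by downward induction along the grading $V=W\oplus NW\oplus\dots\oplus N^{r-1}W$: surjectivity of $N^s\colon W\to N^sW$ gives that $\{N^sP_Wb=P_{N^sW}N^sb : b\in B\}$ spans $N^sW$, and modulo the already-captured pieces $N^{s+1}W,\dots,N^{r-1}W$ one has $N^sP_Wb\equiv N^sb$; the availability constraint is automatic since $N^sb\neq0$ iff $s\le l_b$, while if $N^sb=0$ then $N^sP_Wb=0$ by \eqref{nilp} and contributes nothing. Since a finite spanning set of a finite-dimensional space is a frame, this settles the theorem while dispensing with the perfect-matrix machinery altogether. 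What the paper's heavier bookkeeping buys is a more constructive conclusion: it extracts a genuine basis from among the iterates, specifying which $\gamma$ vectors and how many iterations of each suffice; your argument is shorter and more transparent but yields only spanning, which is all the statement asserts. One caveat about your closing aside: deducing Theorem \ref{Jordan-N} is not merely a matter of assembling per-component frames, because the iteration budget $l_i=r_i-1$ is governed by the full $T$-annihilator, with $r_i=m_i+n_i$ split across the components; the paper needs Lemma \ref{lem-u} and the replacement of $T^{m_i+s}b_i$ by $T^{m_i+s}b_i+\beta_s(T)b_i$ to decouple the components without exceeding that budget.
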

\begin{proof}
In order to prove  Theorem \ref {Jordan-1block}, we will show that there exist  vectors $\{b_1, \dots, b_\gamma\}$ in $B$, where $\gamma = \text{dim}(W)$, such that 
\begin{equation*}
\bigcup _{i=1}^\gamma \left\{b_i, Nb_i, \dots, N^{t_i -1}b_i\right\} \quad \text{is a basis of}\  V.
\end{equation*}
Recall that $t_i$ are the dimensions of $V_i$ defined above.
Since $\{P_W(b): b \in B\}$ is complete in $W$ and $\text{dim}(W) = \gamma$ it is clear that we can choose $\{b_1, \dots, b_\gamma\} \subset B$ such that  $\{P_W(b_i): i = 1, \dots, \gamma\}$ is a basis of $W$. Since $\{w_1, \dots, w_\gamma\}$ is also a basis of $W$, there exist unique
scalars $\{\theta_{i,j}: i,j = 1,\dots,\gamma\}$ such that, 
\begin{equation} \label{**}
 P_W(b_i) = \sum_{j=1}^\gamma \theta_{ij}w_j.
 \end{equation}
with the matrix  $\Theta = \{\theta_{i,j}\}_{i,j = 1, \dots, \gamma}$ invertible.
 Thus, using Lemma~\ref{perfect} we can relabel the indices of $\{b_i\}$  in such a way that $\Theta$ is {\em \perfect}. Therefore, without loss of generality, we can assume that $\{b_1, \dots, b_\gamma\}$ are already in the right order, so that $\Theta$ is \perfect.
 
We will now prove that the $d$ vectors $\left\{b_i, Nb_i, \dots, N^{t_i -1}b_i\right\}_{i=1,\dots, \gamma}$ are linearly independent. For this, assume that there exist scalars $\alpha_j^s$ such that
 \begin{equation} \label{++}
 0 = \sum_{j=1}^{\gamma}\alpha_j^0 b_j + \sum_{j=1}^{p_1} \alpha_j^1 Nb_j + \dots + \sum_{j=1}^{p_{r-1}}
 \alpha^{r-1}_j N^{r-1}b_j,
\end{equation}
where $p_s = \max\{j: t_j > s\}=\text{dim}N^{s}W, s=1, \dots, r-1$ (note that $p_s \geq 1,$ since $N^{r-1}b_1 \not=0$).

Note that since $V = W \oplus NW \oplus \dots \oplus N^{r-1}W$, for any vector $x \in V$, $P_W(Nx) = 0$.
Therefore, if we apply $P_W$ on both sides of \eqref{++}, we obtain
\begin{equation*}
\sum_{j=1}^\gamma \alpha^0_j P_W b_j = 0.
\end{equation*}
Since $\{P_Wb_i:i=1,\dots,\gamma\}$ are linearly independent, we have $\alpha^0_j = 0, \;j = 1, \dots, \gamma$. Hence, if we now apply $P_{NW}$ to \eqref{++}, we have as before that
\begin{equation*}
\sum_{j=1}^{p_1} \alpha^1_j P_{NW} N b_j = 0.
\end{equation*}
Using the conmutation property of the projection, \eqref{nilp}, we have
\begin{equation*}
\sum_{j=1}^{p_1} \alpha^1_j N P_{W}  b_j = 0.
\end{equation*}
In matrix notation, this is
\begin{equation*}
[\alpha_1^1 \dots \alpha^1_{p_1}] \Theta_{p_1} \left[\begin{array}{c} Nw_1\\ \vdots \\Nw_{p_1}\end{array}\right] = 0.
\end{equation*}
Note that by definition of $p_1$, $Nw_1, \dots, Nw_{p_1}$ span $NW$,  and since the dimension of $NW$ is exactly $p_1$,  $Nw_1, \dots, Nw_{p_1}$ are linearly independent vectors. Therefore
$ [\alpha_1^1 \dots \alpha^1_{p_1}] \Theta_{p_1} = 0$. Since $\Theta$ is \perfect, $ [\alpha_1^1 \dots \alpha^1_{p_1}]  = [0 \dots 0]$.
Iterating the above argument, the Theorem follows.
\end{proof}

 \begin{proof}[Proof of Theorem \ref{Jordan-N} ]
 \ 
 
We will prove  the case when the minimal polynomial has only two factors. The general case follows by induction.

That is, let $T:V\to V$ be a linear transformation with characteristic polynomial of the form 
$\chi_T(x) = (x-\lambda_1)^{h_1}(x-\lambda_2)^{h_2}$. Thus, $V = V_1 \oplus V_2$ where $V_1, V_2$ are the subspaces associated to each factor,
and $T=T_1\oplus T_2$. In addition, $W=W_1\oplus W_2$ where $W_1, W_2$ are the subspaces  of the cyclic vectors from the cyclic decomposition of $N_1$ with respect of $V_1$ and
of $N_2$ with respect to $V_2$.

Let $\{b_i: i \in \Omega\} $ be vectors in $V$ that satisfy the hypothesis of the Theorem. For each $b_i$ we write $b_i = c_i + d_i$ with $c_i \in V_1$ and $d_i \in V_2$, $i \in \Omega$.  Let $r_i, m_i$ and $n_i$ be the degrees of the annihilators $q^{T}_{b_i}$, $q^{T_1}_{c_i}$ and $q^{T_2}_{d_i}$, respectively. By hypothesis  $\{P_{W_1}c_i: i\in \Omega\}$ and  $\{P_{W_2}d_i: i\in \Omega\}$ are complete in $W_1$ and $W_2$, respectively. Hence, applying Theorem \ref{Jordan-1block} to $N_1$ and $N_2$ we conclude that  $\bigcup_{i\in \Omega}\{T_1^jc_i,  j=0,1, \dots m_i-1\}$ is complete in $V_{1},$ and that $\bigcup_{i\in \Omega}\{T_2^jd_i,  j=0,1, \dots n_i-1\}$ is complete in $V_{2}$. 

We will now need a Lemma: (Recall that $q^T_b$ is the $T$-annihilator of the vector $b$)

\begin{lemma} \label{lem-u}Let $T$ be as above, and $V = V_1 \oplus V_2$. Given $b \in V$, $b=c+d$ then 
$q^T_b = q^{T_1}_c q^{T_2}_d$ where $q^{T_1}_c$ and $q^{T_2}_d$ are coprime. Further let $u \in V_2$, $u = q^{T_1}_c(T_2)d$. Then $q^{T_2}_u$ coincides with $q^{T_2}_d$.
\end{lemma}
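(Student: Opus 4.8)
The plan is to exploit that the direct sum decomposition $V = V_1 \oplus V_2$ reduces $T$, so that for any polynomial $p \in \C[x]$ one has $p(T)b = p(T_1)c + p(T_2)d$ with the two summands lying in the complementary $T$-invariant subspaces $V_1$ and $V_2$. Consequently $p(T)b = 0$ if and only if $p(T_1)c = 0$ and $p(T_2)d = 0$, that is, if and only if both $q^{T_1}_c$ and $q^{T_2}_d$ divide $p$ in $\C[x]$. This identifies $q^T_b$ with the monic least common multiple of $q^{T_1}_c$ and $q^{T_2}_d$.

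Next I would use the primary decomposition structure recalled above: the minimal polynomial of $T_1$ is a power of the single linear factor $(x-\lambda_1)$, so $q^{T_1}_c$, being a divisor of it, has the form $(x-\lambda_1)^{a}$ for some $a$; likewise $q^{T_2}_d = (x-\lambda_2)^{b}$. Since the eigenvalues $\lambda_1$ and $\lambda_2$ are distinct, these two polynomials are coprime, hence their least common multiple is their product. Combining with the previous step gives $q^T_b = q^{T_1}_c\, q^{T_2}_d$ with coprime factors, which is the first assertion.

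For the second assertion, set $g := q^{T_1}_c = (x-\lambda_1)^{a}$, so that $u = g(T_2)d$ with $g(T_2) = (T_2 - \lambda_1 I)^{a}$. On $V_2$ we may write $T_2 = \lambda_2 I + N_2$ with $N_2$ nilpotent, whence $T_2 - \lambda_1 I = (\lambda_2-\lambda_1)I + N_2$ is invertible because $\lambda_2 - \lambda_1 \neq 0$ (nilpotent plus nonzero scalar). Thus $g(T_2)$ is an invertible operator on $V_2$, and being a polynomial in $T_2$ it commutes with every $p(T_2)$. Hence for any polynomial $p$ we get $p(T_2)u = p(T_2)g(T_2)d = g(T_2)\bigl(p(T_2)d\bigr)$, and invertibility of $g(T_2)$ yields $p(T_2)u = 0 \iff p(T_2)d = 0$. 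Therefore $u$ and $d$ have the same $T_2$-annihilator, i.e. $q^{T_2}_u = q^{T_2}_d$. I do not expect a real obstacle here; the one point deserving attention is the invertibility of $q^{T_1}_c(T_2)$ on $V_2$, which is exactly where the hypothesis $\lambda_1 \neq \lambda_2$ enters and makes substituting $T_2$ into a polynomial annihilating $T_1$ harmless. The rest is routine bookkeeping with invariant subspaces and divisibility in $\C[x]$.
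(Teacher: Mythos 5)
Your proof is correct, and for the second assertion it takes a genuinely different route from the paper. For the first identity you spell out what the paper merely asserts as ``a consequence of the decomposition of $T$'': since $p(T)b = p(T_1)c + p(T_2)d$ with the summands in complementary $T$-invariant subspaces, $q^T_b$ is the monic least common multiple of $q^{T_1}_c$ and $q^{T_2}_d$, and coprimality (each annihilator is a power of a distinct linear factor) turns the lcm into the product. For the second identity the paper argues purely by divisibility: from $0 = q^{T_2}_u(T_2)u = (q^{T_2}_u q^{T_1}_c)(T_2)d$ it deduces $q^{T_2}_d \mid q^{T_2}_u q^{T_1}_c$, hence $q^{T_2}_d \mid q^{T_2}_u$ by coprimality, and gets the reverse divisibility from $q^{T_2}_d(T_2)u = 0$; this uses nothing beyond coprimality of the two annihilators. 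You instead observe that $q^{T_1}_c(T_2) = (T_2 - \lambda_1 I)^a$ is invertible on $V_2$ (nonzero scalar plus nilpotent), so $p(T_2)u = q^{T_1}_c(T_2)\,p(T_2)d$ vanishes exactly when $p(T_2)d$ does, which identifies the two annihilator ideals at once. Your argument is arguably cleaner in the stated two-eigenvalue setting, but it leans on the structure $T_2 = \lambda_2 I + N_2$; the paper's divisibility argument transfers verbatim to the inductive step of Theorem \ref{Jordan-N}, where $T_2$ is the restriction to a sum of several primary components and $q^{T_2}_d$ is no longer a power of a single linear factor. (Your version can be adapted there as well, by replacing ``nonzero scalar plus nilpotent'' with ``coprime to the minimal polynomial of $T_2$, hence invertible via a B\'ezout identity''.)
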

\begin{proof}

The fact that $q^T_b = q^{T_1}_c q^{T_2}_d$ with coprime $q^{T_1}_c$ and $q^{T_2}_d$ is a consequence of the decomposition of $T$.

Now, by definition of $q^{T_2}_u$ we have that
$$ 0 = q^{T_2}_u(T_2)(u)= q^{T_2}_u(T_2)(q^{T_1}_c(T_2)d) = (q^{T_2}_u q^{T_1}_c)(T_2)d.$$
Thus, $q^{T_2}_d$ has to divide $q^{T_2}_u\cdot q^{T_1}_c$, but since $q^{T_2}_d$ is coprime with $q^{T_1}_c$, we conclude that
\begin{equation}\label{l1}
q^{T_2}_d \quad \text{divides} \quad q^{T_2}_u.
\end{equation}

On the other hand
\begin{align*}
 0 & = q^{T_2}_d(T_2)(d)  = q^{T_1}_c(T_2)(q^{T_2}_d(T_2)d) = (q^{T_1}_c q^{T_2}_d)(T_2)d  \\
  & = (q^{T_2}_d q^{T_1}_c)(T_2)d = q^{T_2}_d(T_2)(q^{T_1}_c(T_2)d) = q^{T_2}_d(T_2)(u), 
  \end{align*}
and therefore
\begin{equation}\label{l2}
q^{T_2}_u \quad \text{divides} \quad q^{T_2}_d.
\end{equation}
From \eqref{l1} and \eqref{l2} we obtain $q^{T_2}_d = q^{T_2}_u$.
\end{proof}

Now, we continue with the proof of the Theorem. Recall $r_i, m_i$ and $n_i$ be the degrees of $q^{T}_{b_i}$, $q^{T_1}_{c_i}$ and $q^{T_2}_{d_i}$, respectively, and let $\li = r_i-1$. Also note that by Lemma \ref {lem-u} $r_i=m_i+n_i$. In order to prove that the set $\{b_i, T b_i, \dots, T^{\li} b_i: i \in \Omega\}$ is complete in $V,$ we will replace this set with a new one in such a way that the dimension of the span does not change.

For each $i  \in \Omega$, let $u_i = q^{T_1}_{c_i}(T_2)d_i$. Now, for a fixed $i$ we leave the vectors
$b_i, T b_i, \dots, T^{m_i-1}b_i$ unchanged, but for $s=0, \dots, n_i-1$ we replace the vectors $T^{m_i+s}b_i$ by the vectors $T^{m_i+s}b_i + \beta_s(T)b_i$
where $\beta_s$ is the polynomial $\beta_s(x) = x^s q^{T_1}_{c_i}(x) - x^{m_i+s}.$

Note that $\text{span}\{b_i, T b_i, \dots, T^{m_i+s}b_i\}$ remains unchanged, since $\beta_s(T)b_i$ is a linear combination of the vectors
$\{T^sb_i, \dots , T^{m_i+s-1}b_i\}$.

Now we observe that:
$$
T^{m_i+s}b_i + \beta_s(T) b_i = \left[ T_1^{m_i+s}c_i  +\beta_s (T_1)c_i\right]   +   \left[ T_2^{m_i+s}d_i  +\beta_s(T_2) d_i\right] .
$$
The first term of the sum on the right hand side of the equation above is in $V_1$ and the second in $V_2$.
By definition of $\beta_s$ we have:

$$
T_1^{m_i+s}c_i  +\beta_s (T_1)c_i = T_1^{m_i+s}c_i  + T_1^s q^{T_1}_{c_i}(T_1)c_i - T_1^{m_i+s} c_i = T_1^s q^{T_1}_{c_i}(T_1)c_i = 0, 
$$
 and  
$$
T_2^{m_i+s}d_i  +\beta_s (T_2) d_i  = T_2^sq^{T_1}_{c_i}(T_2)(d_i) = T_2^s u_i.
$$
Thus, for each $i \in \Omega$, the vectors $\{b_i, \dots, T^{\li}b_i\} $ have been replaced by
the vectors $\{b_i, \dots, T^{m_i-1}b_i, u_i, \dots, T^{n_i-1}u_i \}$ and both sets have the same span.

To finish the proof we only need to show that the new system is complete in $V$.

Using Lemma \ref{lem-u}, we have that for each $i \in \Omega$,
 $$\text{dim(span}\{ u_i,\dots, T_2^{n_i-1}u_i\}) = \text{dim(span}\{ d_i,\dots, T_2^{n_i-1}d_i\})= n_i,$$
 
 and since each $T_2^su_i \in \text{span}\{ d_i,\dots, T_2^{n_i-1}d_i\}$ we conclude that 
\begin{equation} \label{span-u}
\text{span}\{ u_i,\dots, T_2^{n_i-1}u_i: i\in \Omega\} = \text{span}\{ d_i,\dots, T_2^{n_i-1}d_i: i  \in \Omega\}.
\end{equation}
 
 Now assume that $x\in V$ with $x=x_1+x_2, \,\, x_i\in V_i.$ Since by hypothesis $\text{span}\{ c_i,\dots, T_1^{m_i-1}c_i: i\in \Omega\}$ is complete in $V_1$, we can write 
  \begin{equation}\label{comp-1}
x_1 = \sum_{i\in \Omega} \sum_{j=0}^{m_i-1} \alpha^i_j T_1^{j}c_i  , 
\end{equation}
for same scalars $\alpha^i_j$, and therefore, 
 \begin{equation}\label{comp-2}
  \sum_{i\in \Omega} \sum_{j=0}^{m_i-1} \alpha^i_j T^{j}b_i = x_1 +  
  \sum_{i\in \Omega} \sum_{j=0}^{m_i-1} \alpha^i_j T_2^{j}d_i = x_1 + \tilde x_2, 
  \end{equation}
 since $\sum_{i\in \Omega} \sum_{j=0}^{m_i-1} \alpha^i_j T_2^{j}d_i =\tilde x_2$ is in $V_2$ by the invariance of $V_2$ by $T$. Since by hypothesis $\{T_2^j d_i : i\in \Omega, \quad j=1, \dots,n_i-1\}$ is complete in $V_2$,  by equation  \eqref{span-u}, 
 $\{T_2^j u_i:i\in \Omega, \quad j=1, \dots, n_i-1\}$  is also complete in $V_2$, and therefore there exist scalars $\beta^i_j$, 
 $$
x_2 - \tilde x_2 = \sum_{i\in \Omega} \sum_{j=0}^{n_i-1} \beta^i_j T_2^{j}u_i,
 $$ 
 and so
 $$
 x =  \sum_{i\in \Omega} \sum_{j=0}^{m_i-1} \alpha^i_j T^{j}b_i + \sum_{i\in \Omega} \sum_{j=0}^{n_i-1} \beta^i_j T_2^{j}u_i,
 $$ 
 which completes the proof of Theorem \ref{Jordan-N} for the case of two coprime factors
   in the minimal polynomial of $J$. The general  case of more factors follows by induction 
   adapting the previous argument.

\end{proof}

Theorem \ref{tadcul} and Theorem \ref{tacul} and its corollaries are easy consequences of Theorem \ref{Jordan-N}.

\begin{proof}[Proof of Theorem \ref{tacul}]
Note that if $\{J^{L+1}b_i: i \in \Omega\} \subset \text{span} (E)$, then $\{J^{L+2}b_i: i \in \Omega\} \subset \text{span} (E)$ as well. Continuing in this way, it follows that for each $i \in \Omega$, $span (E)$ contains all the powers $J^jb_i$ for any $j$. Therefore, using Theorem \ref{tadcul}, it follows that 
$span (E) $ contains a frame of $\C^d$, so that, $span (E)=\C^d$ and $E$ is a frame of $\C^d.$ The converse is obvious.
\end{proof}
The proof of Theorem \ref{TAL} uses a similar argument.

Although Theorem \ref {TAD0} is a direct consequence of Theorem \ref {tadcul}, we will give a simpler proof for this case. 

\begin{proof} [Proof of Theorem \ref {TAD0}]
\ 

Let $\{P_j(b_i):i\in \Omega\}$ form a frame of $P_j(\C^d)$, for each $j=1,\dots ,n$. Since we are working with finite dimensional spaces, to show that $\{D^{j}b_i: \; i\in \Omega,  \, j=0, \dots, \li\}$ is a frame of $\C^d$, all we need to show is that it is complete in $\C^d$. Let $x$ be any vector in $\C^d$, then $x=\sum\limits_{j=1}^n P_j x$. Assume that $\langle D^lb_i,x\rangle=0$ for all $i\in \Omega$ and $l=0,\dots,l_i$. Since $l_i=r_i-1$, where $r_i$ is the degree of the $D$-annihilator of $b_i$, we have that $\langle D^lb_i,x\rangle=0$ for all $i\in \Omega$ and $l=0,\dots,d$. In particular, since $n\le d$, $\langle D^lb_i,x\rangle=0$ for all $i\in \Omega$ and $l=0,\dots,n$. Then
\begin{equation}
\label {compcd}
\langle D^lb_i,x\rangle=\sum\limits_{j=1}^n\langle D^lb_i,P_jx\rangle=\sum\limits_{j=1}^n\lambda^l_j\langle P_jb_i,P_jx\rangle=0,
\end{equation}
for all $i\in \Omega$ and $l=0,\dots,n$. 
Let $z_i$ be the vector $\big(\langle P_jb_i,P_jx\rangle\big) \in \C^n$. Then for each $i$, \eqref {compcd} can be written in matrix form as $Vz_i=0$ where V is the $n\times n$ Vandermonde matrix
\begin {equation}\label {VM}
V=
 \begin{pmatrix}
 
1 & 1 & \cdots & 1 \\
  \lambda_1 & \lambda_2 & \cdots & \lambda_n \\
  \vdots  & \vdots  & \ddots & \vdots  \\
  \lambda_1^{n-1} & \lambda_2^{n-1} & \cdots & \lambda_n^{n-1}
 \end{pmatrix},
 \end {equation}
which is invertible since, by assumption, the $\lambda_j$s are distinct. Thus, $z_i=0$. Hence, for each $j$, we have that $\langle P_jb_i,P_jx\rangle=0$ for all $i\in \Omega$. Since $\{P_j(b_i):i\in \Omega\}$ form a frame of $P_j(\C^d)$, $P_jx=0$. Hence, $P_jx=0$ for $j=1,\dots,n$ and therefore $x=0$. 

\end{proof}

\subsection{Remark}

Given a general linear transformation $T:V\longrightarrow V$, the cyclic decomposition theorem gives the rational form
for the matrix of $T$ in some special basis.  A natural question is then if we can obtain a similar result to Theorem \ref{Jordan-N} for this decomposition.
(Rational form instead of Jordan form). The answer is no. That is, if a set of vectors $b_i$ with $i \in \Omega$ where $\Omega$ is a finite subset of $\{1, \dots, d\}$ 
when projected onto the subspace generated by the cyclic vectors, is complete in this subspace, this does not necessarily imply 
that its iterations $T^jb_i$ are complete in $V$.
The following example illustrates this fact for a single cyclic operator.
\begin{itemize}
\item
Let $T$ be the linear transformation in $\R^3$ given as multiplication by the following matrix $M.$
$$M = \left[ \begin{array}{rrr}
0&0&1\\
1&0&1\\
0&1&2 \end{array}\right]
$$
The matrix $M$ is in rational form with just one cyclic block. 
The vector $e_1 = (1,0,0)$ is cyclic for $M$.  However it is easy to see that there exists a vector $b=\left[\begin{array}{r}x_1\\x_2\\x_3\end{array}\right]$ in $\R^3$ such that $P_W(b)=x_1 \not=0$, (here $W$ is span$\{e_1\}$), but 
$\{b, Mb, M^2b\}$ are linearly dependent, and hence do not span $\R^3$.
So our proof for the Jordan form uses the fact that the cyclic components in the Jordan decomposition are nilpotent!
\end{itemize}


\section{Dynamical Sampling in infinite dimensions}
\label {DSID}
In this section we consider the dynamical sampling problem in a separable Hilbert space $\mathcal H$, that without any lost of generality we can consider to be $\ell^2(\N)$. The evolution operators we will consider belong to the following  class $\DO$ of bounded self adjoint operators:
$$ \mathcal {A}=\{ A \in \mathcal{B}(\ell^2(\N)):  A=A^\ast, \text{and there exists a basis of $ \ell^2(\N) $  of eigenvectors of } A \}.$$ 
The notation $\mathcal{B}(\mathcal H)$ stands for the bounded linear operators on the Hilbert space $\mathcal H$.
So, if $A\in\DO$  there exists an unitary operator $B$ such that $A=B^*DB$ with  $D=\sum_j\lambda_jP_j$ with pure spectrum $\sigma_p(A)=\{\lambda_j: j\in \N\}\subset \R$ and orthogonal projections $\{P_j\}$ such that $\sum_jP_j=I$ and $P_jP_k=0$ for $j \ne k$.
Note that the class $\DO$ includes all the bounded self-adjoint compact operators.

\begin{remark}\label{avsd}
Note that by the definition of $\DO$, we have that for any $f \in \ell^2(\N)$ and $l=0, \dots $ \[<f,A^le_j>=<f,B^{\ast}D^lB e_j>=<B f, D^lb_j> \quad \text{and} \quad \|A^l\| = \|D^l\|.
\]
It follows that $\X_\Omega=\big\{A^le_i:\; i \in \Omega, l=0,\dots,l_i\big\}$ is complete, (minimal, frame) if and only if $\big\{ D^lb_i:\, i \in \Omega, l=0,\dots,l_i\big\}$ is complete (minimal, frame).
\end{remark}

\subsection {Completeness} \label {completeness}
In this section, we characterize the sampling sets $\Omega \subset \N$ such that a function $f\in \ell^2(\N)$ can be recovered from the data 
\[
Y=\{f(i),Af(i), A^2f(i),\dots, A^{l_i}f(i): \, i \in \Omega\}
\]
where $A \in \DO$, and  $0\le l_i\le \infty$.

 For  each set $\Omega$ we consider the set of vectors $O_\Omega:=\{b_j= Be_j: j \in \Omega\}$, where  $e_j$ is the $j$th canonical vector of $\ell^2(\N)$. For each $b_i \in O_\Omega$ we define $r_i$ to be the degree of the $D$-annihilator of $b_i$ if such annihilator exists, or we set $r_i=\infty$. This number $r_i$ is also  the degree of the $A$-annihilator of $e_i$. for the remainder of this paper we let $l_i=r_i-1$.

\begin {theorem} \label {complete}
Let $A\in \DO$ and $\Omega\subset \N$. Then the set $\X_\Omega=\big\{A^le_i:\; i \in \Omega,  l=0,\dots, l_i\big\}$ is complete in $\ell^2(\N)$ if and only if for each $j$, the set $\big\{P_j(b_i): i  \in \Omega\big\}$ is complete on the range $E_j$ of $P_j$. 

In particular, $f$ is determined uniquely from the set 
\[
Y=\{f(i),Af(i), A^2f(i),\dots, A^{l_i}f(i): \, i \in \Omega\}
\]   
if and only if for each $j$, the set $\big\{P_j(b_i): i  \in \Omega\big\}$ is complete in the range $E_j$ of $P_j$.
\end{theorem}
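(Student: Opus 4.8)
The plan is to reduce everything to the diagonalized picture and then split the argument according to whether the $D$-annihilator of $b_i$ has finite degree. By Remark~\ref{avsd} it suffices to show that $\{D^{l}b_{i}: i\in\Omega,\ l=0,\dots,l_{i}\}$ is complete in $\ell^{2}(\N)$ if and only if $\{P_{j}b_{i}: i\in\Omega\}$ is complete in $E_{j}$ for every $j$, where $D=\sum_{j}\lambda_{j}P_{j}$. As a preliminary bookkeeping step I would record that $q(D)b_{i}=\sum_{j}q(\lambda_{j})P_{j}b_{i}$, so the $D$-annihilator of $b_{i}$ exists precisely when $\supp(b_{i}):=\{j:P_{j}b_{i}\neq 0\}$ is finite, in which case $q^{D}_{b_{i}}(x)=\prod_{j\in\supp(b_{i})}(x-\lambda_{j})$ and $r_{i}=\#\supp(b_{i})$; I also note that $\sigma_{p}(A)\subseteq[-\|A\|,\|A\|]=:K$, a compact subset of $\R$.

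For the ``only if'' direction, assume $\{D^{l}b_{i}\}$ is complete, fix $j$, and take $y\in E_{j}$ with $\langle y,P_{j}b_{i}\rangle=0$ for all $i\in\Omega$. Since $y\in E_{j}$ we have $D^{l}y=\lambda_{j}^{l}y$ and $\langle y,b_{i}\rangle=\langle y,P_{j}b_{i}\rangle$ (the remaining components of $b_{i}$ lie in the orthogonal complement of $E_{j}$), so $\langle y,D^{l}b_{i}\rangle=\langle D^{l}y,b_{i}\rangle=\lambda_{j}^{l}\langle y,P_{j}b_{i}\rangle=0$ for every $l$, in particular for $l=0,\dots,l_{i}$. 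Thus $y$ is orthogonal to the complete set $\{D^{l}b_{i}\}$, forcing $y=0$, so $\{P_{j}b_{i}:i\in\Omega\}$ is complete in $E_{j}$.

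For the ``if'' direction, assume $\{P_{j}b_{i}:i\in\Omega\}$ is complete in each $E_{j}$ and let $x\in\ell^{2}(\N)$ satisfy $\langle D^{l}b_{i},x\rangle=0$ for all $i\in\Omega$ and $l=0,\dots,l_{i}$. Expanding, $\langle D^{l}b_{i},x\rangle=\sum_{j}\lambda_{j}^{l}\,c^{i}_{j}$ with $c^{i}_{j}:=\langle P_{j}b_{i},P_{j}x\rangle$, and by Cauchy--Schwarz $\sum_{j}|c^{i}_{j}|\le\|b_{i}\|\,\|x\|<\infty$. Hence the finite complex Borel measure $\mu_{i}:=\sum_{j}c^{i}_{j}\delta_{\lambda_{j}}$ on $K$ has vanishing moments of all orders $\le l_{i}$. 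If $r_{i}<\infty$, then $\mu_{i}$ is supported on the $r_{i}$ points $\{\lambda_{j}:j\in\supp(b_{i})\}$ and the $r_{i}=l_{i}+1$ vanishing moments force $\mu_{i}=0$ by invertibility of the corresponding Vandermonde matrix (exactly as in the proof of Theorem~\ref{TAD0}); if $r_{i}=\infty$, all moments of $\mu_{i}$ vanish, so by uniform density of polynomials in $C(K)$ together with the Riesz representation theorem, $\mu_{i}=0$. Either way $c^{i}_{j}=\langle P_{j}b_{i},P_{j}x\rangle=0$ for all $i\in\Omega$ and all $j$; fixing $j$ and using completeness of $\{P_{j}b_{i}\}$ in $E_{j}$ gives $P_{j}x=0$, and summing over $j$ (recall $\sum_{j}P_{j}=I$) yields $x=0$. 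The ``in particular'' statement is then immediate, since unique determination of $f$ from $Y$ is precisely completeness of $\{A^{l}e_{i}:i\in\Omega,\ l=0,\dots,l_{i}\}=\X_{\Omega}$ (here $A=A^{\ast}$).

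The only non-routine point, and the one I would handle most carefully, is the case $r_{i}=\infty$: one needs a \emph{finite} (not merely $\sigma$-finite) complex measure supported on a \emph{bounded} subset of $\R$ in order to invoke Weierstrass approximation and conclude that vanishing of all moments implies the measure is zero. This is exactly where boundedness of $A$ (compactness of $\overline{\sigma_{p}(A)}$) and the $\ell^{1}$-summability of the $c^{i}_{j}$ coming from Cauchy--Schwarz are used; the finite case is then just the classical Vandermonde observation already exploited in Theorem~\ref{TAD0}.
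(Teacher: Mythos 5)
Your proof is correct, and your necessity direction coincides with the paper's (test a vector of $E_j$ against the complete system $\{D^l b_i\}$ using $D^l y=\lambda_j^l y$). For sufficiency, however, you take a genuinely different route. The paper argues at the level of the subspace $S=\overline{\mathrm{span}}\{D^l b_i : i\in\Omega,\ l=0,\dots,l_i\}$: because $l_i=r_i-1$, $S$ is invariant under $D$, hence so is $S^\perp$ by self-adjointness, so $P_{S^\perp}$ commutes with $D$ and with every spectral projection $P_j$; writing $P_{S^\perp}=\sum_j P_jP_{S^\perp}P_j$ and using $P_{S^\perp}b_i=0$ together with completeness of $\{P_k b_i\}$ in $E_k$ forces $P_kP_{S^\perp}P_k=0$ for each $k$, hence $P_{S^\perp}=0$. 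You instead test a single vector $x$ orthogonal to the available iterates and run a moment argument: the coefficients $c^i_j=\langle P_jb_i,P_jx\rangle$ are absolutely summable by Cauchy--Schwarz, so $\mu_i=\sum_j c^i_j\delta_{\lambda_j}$ is a finite complex measure on the compact set $[-\|A\|,\|A\|]$, and the vanishing of the moments of order $0,\dots,l_i$ kills $\mu_i$ --- by invertibility of a Vandermonde matrix when $r_i<\infty$ (exactly as in the proof of Theorem~\ref{TAD0}), and by Weierstrass density in $C(K)$ plus the uniqueness part of the Riesz representation theorem when $r_i=\infty$; your preliminary identification of ``$r_i<\infty$'' with ``$P_jb_i\neq 0$ for only finitely many $j$'' is also correct and is what makes the finite case work. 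Both arguments are sound: the paper's is shorter and purely operator-theoretic, avoiding measure theory altogether, while yours is more elementary in spirit, directly extends the finite-dimensional Vandermonde proof to the infinite-dimensional setting, and makes explicit where boundedness of $A$ and the choice $l_i=r_i-1$ enter. The ``in particular'' statement is handled identically in both, via the equivalence of unique recovery with completeness of $\X_\Omega$ (using $A=A^\ast$).
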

\begin{remarks} \label {GenClass} 
\item i) Note that Theorem \ref {complete} implies that $|\Omega|\ge \sup_j \dim (E_j)$. Thus, if any eigen-space has infinite dimensions, it is necessary to have infinitely many ``spacial'' sampling points in order to recover $f$.
\item ii) Theorem \ref {complete} can be extended to a larger class of operators. For example, for the class of operators $\widetilde {\mathcal A}$ in $\mathcal B( \ell^2(\N))$ in which $A \in \widetilde {\mathcal A}$ if $A=B^{-1}DB$ where with  $D=\sum_j\lambda_jP_j$ with pure spectrum $\sigma_p(A)=\{\lambda_j: j\in \N\}\subset \C$ and orthogonal projections $\{P_j\}$ such that $\sum_jP_j=I$ and $P_jP_k=0$ for $j \ne k$.
 
\end{remarks}
\begin {proof}[Proof of Theorem~\ref{complete}]
\ 

By Remark~\ref{avsd}, to prove the theorem we only need to show that $\big\{ D^lb_i:\, i \in \Omega, l=0,\dots,l_i\big\}$ is complete if and only if  for each $j$, the set $\big\{P_j(b_i): i  \in \Omega\big\}$ is complete in the range $E_j$ of $P_j$. 

Assume that $\big\{ D^lb_i:\, i \in \Omega, l=0,\dots,l_i\big\}$ is complete. For a fixed $j$, let $g\in E_j$ and assume that $<g,P_jb_i>=0$ for all $i\in \Omega$. Then for any $l=0,1,\dots,l_i$,  we have
\[\lambda^l <g,P_jb_i>= <g,\lambda^lP_jb_i>=<g,P_jD^lb_i>=<g,D^lb_i>=0.\]
Since $\big\{ D^lb_i:\, i \in \Omega, l=0,\dots,l_i\big\}$ is complete in $\ell^2(\N)$, $g=0$. It follows that $\big\{P_j(b_i): i  \in \Omega\big\}$ is complete on the range $E_j$ of $P_j$.

Now assume that $\big\{P_j(b_i): i  \in \Omega\big\}$ is complete in the range $E_j$ of $P_j$. Let $S=\overline{span} \big\{D^lb_i;  i \in \Omega, l=0,\dots,l_i\big\}$. Clearly $DS\subset S$. Thus $S$ is invariant for $D$. Since $D$ is self-adjoint, $S^\perp$ is also invariant for $D$. It follows that the orthogonal projection $P_{S^\perp}$ commutes with $D$. Hence, $P_{S^\perp}=\sum_jP_jP_{S^\perp}P_j$ where convergence is in the strong operator topology. In particular $P_{S^\perp} b_i=\sum_jP_jP_{S^\perp}P_jb_i=0$. Multiplying both sides by the projection $P_k$ for some fixed $k$ we get that
\[
P_kP_{S^\perp}P_kb_i= P_kP_{S^\perp}P_k(P_kb_i)=0.
\]
Since $\big\{P_k(b_i): i  \in \Omega\big\}$ is complete in $E_k$ and since $k$ was arbitrary, it follows that $P_kP_{S^\perp}P_k=0$ for each $k$. Hence $P_{S^\perp}=0$. That is $\X_\Omega$ is complete which finishes the proof of the theorem.
\end {proof}
\subsection{Minimality and bases for the dynamical sampling in infinite dimensional Hilbert spaces}
\ 

In this section we will show, that for any $\Omega\subset \N$, the set  $\X_\Omega=\big\{A^le_i:\; i \in \Omega, l=0,\dots,l_i \big\}$ is never minimal  if $\Omega$ is a finite set, and hence the set $\X_\Omega$ is never a basis. In some sense, the set $\X_\Omega$ contains many "redundant vectors"  which prevents it from being a basis. However, when $\X_\Omega$ is complete, this redundancy may help $\X_\Omega$ to be a frame. We will discuss this issue in the next section. For this section, we need the celebrated M\"untz-Sz\'asz Theorem characterizing the sequences of monomials that are complete in $C[0,1]$ or $C[a,b]$ \cite {Heil11}:

\begin {theorem}[M\"untz-Sz\'asz Theorem] \label {MS}
Let $0\le n_1\le n_2\le \dots$ be an increasing sequence of nonnegative integers. Then
\begin{enumerate}
\item $\{x^{n_k}\}$ is complete in $C[0,1]$ if and only if $n_1=0$ and $\sum\limits_{k=2}^\infty 1/n_k=\infty$.
\item If $0<a<b< \infty$, then $\{x^{n_k}\}$ is complete in $C[a,b]$ if and only if  $\sum\limits_{k=2}^\infty 1/n_k=\infty$.
\end{enumerate}
\end {theorem}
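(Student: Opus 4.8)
The plan is to prove both parts by duality: convert ``$\{x^{n_k}\}$ is complete'' into ``there is no nonzero measure annihilating all $x^{n_k}$'', and then study a bounded holomorphic function on a half-plane whose zero set contains $\{n_k\}$.

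\emph{Set-up.} By the Hahn--Banach and Riesz representation theorems, $\{x^{n_k}\}$ fails to be complete in $C[a,b]$ exactly when there is a nonzero complex Borel measure $\mu$ on $[a,b]$ with $\int x^{n_k}\,d\mu=0$ for all $k$. In the $[0,1]$ case I would first note that if $n_1>0$ then every element of $\Span\{x^{n_k}\}$, hence every uniform limit of such, vanishes at $0$, so the constant $1$ is not approximable and completeness fails; this is the necessity of $n_1=0$. For the remaining analysis one may assume (after inspecting a possible atom of $\mu$ at $0$) that $\mu(\{0\})=0$, so that $F(z):=\int t^{z}\,d\mu(t)$ is well defined and, once the interval is rescaled to have right endpoint $1$, bounded by $\norm{\mu}$ on $\{\Re z>0\}$, holomorphic there (differentiate under the integral), and vanishing at every $n_k$.

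\emph{Sufficiency.} Composing $F$ with the conformal map $\varphi(w)=\tfrac{1+w}{1-w}$ of $\D$ onto the right half-plane, $G:=F\circ\varphi$ is a bounded holomorphic function on $\D$ vanishing at the points $w_k=\tfrac{n_k-1}{n_k+1}$, which satisfy $\sum_k(1-|w_k|)=\sum_k\tfrac{2}{n_k+1}\asymp\sum_k\tfrac1{n_k}$. Hence if $\sum 1/n_k=\infty$ the Blaschke condition fails, so Jensen's formula forces $G\equiv0$, i.e. $F\equiv0$; then $\int t^{m}\,d\mu=F(m)=0$ for every positive integer $m$, and (via $F(0)$, resp. the atom analysis) also for $m=0$, so $\mu=0$ by the Weierstrass theorem --- a contradiction. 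This gives the ``if'' direction in both (1) and (2).

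\emph{Necessity and the main obstacle.} When $\sum_{k\ge2}1/n_k<\infty$ I would run the argument backwards: the Blaschke product $B(w)=\prod_k\frac{\overline{w_k}}{|w_k|}\frac{w_k-w}{1-\overline{w_k}w}$ converges to a nonzero bounded holomorphic function on $\D$ with zero set exactly $\{w_k\}$, whose pullback $F$ to the half-plane is bounded, holomorphic, not identically zero, and vanishes at the $n_k$. The step I expect to be the crux is to realize $F$ --- after subtracting an elementary piece accounting for its behavior at $z=0$ and $z=\infty$ --- as $\int_0^1 t^{z}\,d\mu(t)$ for a \emph{nonzero} measure $\mu$; equivalently, to exhibit a bounded functional on $C[0,1]$ killing all $x^{n_k}$ but not all of $C[0,1]$. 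One reads candidate moments $c_m$ off the values $F(m)$ and must check, via a Mellin/contour-integral representation and the decay of $F$ along vertical lines, that $(c_m)$ is a genuine Hausdorff moment sequence and that the $c_m$ are not all zero. Granting this, $\{x^{n_k}\}$ is not complete, which finishes the equivalences. Part (2) for $0<a<b$ is the same argument but cleaner: the constant-function obstruction is absent and the endpoint $0$ plays no special role, so no condition beyond $\sum 1/n_k=\infty$ survives. (An alternative route to (2), and to the analogous statement in $L^2[0,1]$, bypasses complex analysis: the Cauchy-determinant identity $\operatorname{dist}_{L^2[0,1]}(x^N,\Span\{x^{n_k}:k\le K\})^2=\tfrac1{2N+1}\prod_{k=1}^{K}\tfrac{(N-n_k)^2}{(N+n_k+1)^2}$ shows $x^N$ lies in the closed span iff $\sum 1/n_k=\infty$, and this transfers to the sup norm on any $[a,b]$ with $a>0$.)
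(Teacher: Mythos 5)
First, a point of comparison: the paper does not prove Theorem \ref{MS} at all --- it is quoted as a classical result (with a citation to Heil's book) and only \emph{used} in the proofs of Theorems \ref{MIN}, \ref{Noframe0} and \ref{NotFrame2}. So your proposal is being measured against the standard literature proof rather than anything in the paper. Your sufficiency half is the standard argument and is essentially sound: duality via Hahn--Banach/Riesz, the transform $F(z)=\int t^{z}\,d\mu(t)$ bounded and holomorphic on the right half-plane, the conformal map to $\D$, and the observation that divergence of $\sum 1/n_k$ violates the Blaschke condition, forcing $F\equiv 0$ and hence $\mu=0$ (with the atom at $0$ and the $n_1=0$ requirement handled as you indicate).

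The genuine gap is the necessity half, and you have flagged but not closed it. Showing that when $\sum 1/n_k<\infty$ the pulled-back Blaschke product can be represented (after correcting its behavior at $0$ and $\infty$) as $\int_0^1 t^{z}\,d\mu(t)$ for a \emph{nonzero} finite measure is exactly the hard content of the classical proof --- one must produce decay of $F$ on vertical lines sufficient for the Mellin inversion and verify that the resulting functional is bounded and not identically zero; none of this is ``granted'' by the set-up, so as written the ``only if'' directions of both (1) and (2) are unproved. Your fallback via the Cauchy/Gram determinant does prove necessity in part (1): the distance formula shows $x^N\notin\overline{\Span}\{x^{n_k}\}$ in $L^2[0,1]$, and since $\|f\|_{L^2[0,1]}\le\|f\|_{\infty,[0,1]}$ this rules out uniform approximation on $[0,1]$. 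But the claimed ``transfer to the sup norm on any $[a,b]$ with $a>0$'' goes the wrong way: uniform convergence on $[a,b]$ gives no control of the $L^2[0,1]$ distance, and in fact non-density in $C[a,b]$ is the \emph{stronger} statement (density in $C[0,1]$ implies density in $C[a,b]$ by restriction, not conversely). So part (2) necessity --- which is precisely the version the paper needs, since it applies the theorem on intervals $[0,\|D\|]$ and to subsequences with $n_1$ possibly large --- would require either carrying out the Blaschke-to-measure construction or an $[a,b]$-adapted distance estimate, neither of which the proposal supplies.
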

We are now ready to state the main results of this section. 
\begin {theorem}\label {MIN}
Let $A\in \DO$ and let $\Omega$ be a non-empty subset of $\N$. If there exists $b_i\in O_\Omega$ such that $r_i=\infty$, then the set $\X_\Omega$ is not minimal.
\end {theorem}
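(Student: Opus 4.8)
The plan is to reduce to the scalar-spectral picture via Remark~\ref{avsd} and then produce an explicit vector in the span of all but one element of $\X_\Omega$ that equals the omitted element, contradicting minimality. First I would fix $b_i \in O_\Omega$ with $r_i = \infty$ and work with the orbit $\{D^l b_i : l = 0,1,2,\dots\}$ under $D = \sum_j \lambda_j P_j$. Writing $b_i = \sum_j P_j b_i$, the coordinates of $D^l b_i$ along the eigenspaces are $\lambda_j^l P_j b_i$. To show $\X_\Omega$ is not minimal it suffices to show that one vector of the orbit, say $b_i$ itself, lies in $\overline{\operatorname{span}}$ of the remaining vectors $\{D^l b_i : l \geq 1\}$ together with everything else in $\X_\Omega$; in fact it suffices to show $b_i \in \overline{\operatorname{span}}\{D^l b_i : l \geq 1\}$.

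The key step is a M\"untz--Sz\'asz argument. Since $A$ is bounded, the spectrum $\sigma_p(A) = \{\lambda_j\}$ is a bounded subset of $\R$, and after rescaling we may assume $\sigma_p(A) \subset [0,1]$ (or, if $0$ is an eigenvalue one must be slightly careful — see below). A finite linear combination $\sum_{l \geq 1} c_l D^l b_i$ has $j$-th eigencomponent $p(\lambda_j) P_j b_i$ where $p(x) = \sum_{l\geq 1} c_l x^l$ is a polynomial vanishing at $0$. So the question of whether $b_i$ is approximable is the question of whether the constant function $1$ can be approximated, uniformly on the closure $K$ of $\{\lambda_j : P_j b_i \neq 0\}$, by polynomials $p$ with $p(0) = 0$ — equivalently, whether $\{1, x, x^2, x^3, \dots\} \setminus \{x^0\}$ together with the requirement $p(0)=0$ still has dense span in $C(K)$ containing the constants. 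If $0 \notin K$, part~(2) of the M\"untz--Sz\'asz Theorem~\ref{MS} (with the full sequence of exponents $1,2,3,\dots$, whose reciprocals sum to $\infty$) gives density of $\operatorname{span}\{x^{n_k}\}$ in $C[a,b]$, so in particular $1$ is approximated by such polynomials; and the $\ell^2$-convergence of $\sum c_l D^l b_i \to b_i$ follows because $\|\sum c_l D^l b_i - b_i\|^2 = \sum_j |p(\lambda_j) - 1|^2 \|P_j b_i\|^2 \leq \|p-1\|_{C(K)}^2 \|b_i\|^2$.

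The main obstacle is the case where $0$ is an accumulation point of $\{\lambda_j : P_j b_i \neq 0\}$, or $0 = \lambda_{j_0}$ for some $j_0$ with $P_{j_0} b_i \neq 0$, since then we need $p(0)=0$ and yet $p \to 1$ uniformly near $0$, which is impossible. The resolution is to not insist on recovering $b_i$ itself but rather some other orbit vector: pick the smallest power $D^m b_i$ that we wish to express, and note that because $r_i = \infty$ the vectors $\{D^l b_i\}_{l \geq 0}$ are linearly independent, so there are infinitely many of them; the redundancy must surface somewhere. More robustly, one instead shows the orbit $\{D^l b_i : l\geq 0\}$ fails the Riesz/minimality condition by exhibiting that $D^l b_i \in \overline{\operatorname{span}}\{D^k b_i : k \neq l\}$ for a suitable $l \geq 1$: approximating $x^l$ on $K$ by polynomials supported on exponents $\{k : k \neq l\}$ is possible by M\"untz--Sz\'asz provided $0 \notin K$, and if $0 \in K$ one uses that the sequence $\{0\}\cup\{k : k\neq l, k\geq 1\}$ still satisfies the M\"untz condition together with $n_1 = 0$, so part~(1) applies on $C[0,1]$. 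Either way some element of the orbit of $b_i$ is a limit of combinations of the others, and that element together with the rest of $\X_\Omega$ exhibits the non-minimality of $\X_\Omega$. I would organize the write-up around this dichotomy ($0 \in K$ versus $0 \notin K$), invoking the appropriate clause of Theorem~\ref{MS} in each case and using the norm estimate above to pass from uniform polynomial approximation on $K$ to $\ell^2$-convergence of the corresponding combinations of $D^l b_i$.
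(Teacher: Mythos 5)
Your overall strategy is the same as the paper's: reduce to the orbit $\{D^l b_i : l \ge 0\}$ via Remark~\ref{avsd} and use the M\"untz--Sz\'asz Theorem~\ref{MS} to show that some power $D^{l}b_i$ already lies in the closed span of the other powers (the paper phrases this as: for a sparse exponent set $\{n_k\}$ with $n_1=0$ and $\sum 1/n_k=\infty$ one has $\overline{\operatorname{span}}\{b, D^{n_k}b\}=\overline{\operatorname{span}}\{D^lb : l\ge 0\}$, so every omitted power is redundant). Your dichotomy between $0\notin K$ and $0\in K$, and the norm estimate passing from uniform approximation on $K$ to $\ell^2$-convergence, are fine.

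The genuine gap is your treatment of the sign of the spectrum. You assert that ``after rescaling we may assume $\sigma_p(A)\subset[0,1]$,'' but $A$ is only self-adjoint, so its eigenvalues lie in $[-\|A\|,\|A\|]$ and a positive rescaling cannot remove negative eigenvalues (and a shift $D\mapsto D+cI$ changes the orbit vectors, so non-minimality does not transfer trivially). Theorem~\ref{MS} as stated applies only on $[0,1]$ or $[a,b]$ with $0<a<b$; it says nothing about uniform approximation on a compact set $K$ containing negative points (e.g.\ $K\subset[-1,-\tfrac12]\cup[\tfrac12,1]$), so neither clause of the theorem covers the case where some $\lambda_j$ with $P_jb_i\neq 0$ are negative, and your argument breaks down there. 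The paper resolves exactly this point with one extra step: it first proves the claim when $D\ge 0$, and in general applies that case to $D^2$ (whose eigenvalues $\lambda_j^2$ are nonnegative, and the $D^2$-annihilator of $b_i$ is still of infinite degree), concluding that the sub-orbit $\{D^{2l}b_i : l\ge 0\}$ is already non-minimal, hence so is the full orbit and a fortiori $\X_\Omega$. Adding this even-power reduction (or invoking a M\"untz-type theorem valid on symmetric intervals, which is not the one cited) would close the gap; the remainder of your write-up would then go through.
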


As an immediate corollary we get
\begin {theorem}\label {MINFiniteOmega}
Let $A\in \DO$ and let $\Omega$ be a finite subset of $\N$. If $\X_\Omega=\big\{A^le_i:\; i \in \Omega, l=0, \dots, l_i \big\}$ is complete in $\ell^2(\N)$, then $\X_\Omega$ is not minimal in $\ell^2(\N)$.
\end {theorem}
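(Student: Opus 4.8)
The plan is to obtain Theorem~\ref{MINFiniteOmega} as a direct consequence of Theorem~\ref{MIN}; the only thing that needs to be checked is that the hypotheses of Theorem~\ref{MINFiniteOmega} force the existence of some $b_i \in O_\Omega$ with $r_i = \infty$, after which Theorem~\ref{MIN} applies verbatim.

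First I would argue by contradiction. Suppose $r_i < \infty$ for every $i \in \Omega$. Since $l_i = r_i - 1$, each ``column'' $\{A^l e_i : l = 0,\dots,l_i\}$ is then a finite set of vectors, and because $\Omega$ itself is finite by hypothesis, the whole set $\X_\Omega = \{A^l e_i : i \in \Omega,\ l = 0,\dots,l_i\}$ is finite, with $|\X_\Omega| \le \sum_{i\in\Omega} r_i < \infty$. But a finite subset of $\ell^2(\N)$ spans a finite-dimensional, hence closed and proper, subspace of the infinite-dimensional space $\ell^2(\N)$, so it cannot be complete. This contradicts the assumption that $\X_\Omega$ is complete. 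Therefore there exists $i_0 \in \Omega$ with $r_{i_0} = \infty$, i.e. $b_{i_0} = Be_{i_0}$ admits no $D$-annihilator.

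Finally, since $\Omega$ is non-empty and contains an index $i_0$ with $r_{i_0} = \infty$, Theorem~\ref{MIN} yields immediately that $\X_\Omega$ is not minimal in $\ell^2(\N)$, which completes the proof. I do not expect any real obstacle in this step: all the substance lies in Theorem~\ref{MIN} (whose argument invokes the M\"untz--Sz\'asz Theorem~\ref{MS} to exhibit a redundancy in the infinite orbit of $b_{i_0}$); the present corollary is just the elementary observation that, when $\Omega$ is finite, completeness of $\X_\Omega$ is incompatible with every orbit being finite, forcing a missing annihilator.
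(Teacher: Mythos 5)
Your proposal is correct and matches the paper's intent: the paper presents this statement as an immediate corollary of Theorem~\ref{MIN}, with exactly the implicit observation you spell out, namely that if every $r_i$ were finite then $\X_\Omega$ would be a finite set and hence could not be complete in the infinite-dimensional space $\ell^2(\N)$, so some $r_{i_0}=\infty$ and Theorem~\ref{MIN} applies. No gaps; your write-up simply makes the paper's ``immediate'' step explicit.
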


Another immediate corollary is
\begin {theorem}\label {NonBasisFiniteOmega}
Let $A\in \DO$ and let $\Omega$ be a finite subset of $\N$. Then the set $\X_\Omega=\big\{A^le_i:\; i \in \Omega, l=0, \dots, l_i \big\}$ is not a basis for $\ell^2(\N)$.
\end {theorem}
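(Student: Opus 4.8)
The plan is to obtain Theorem~\ref{NonBasisFiniteOmega} as an immediate consequence of Theorem~\ref{MINFiniteOmega} (equivalently of Theorem~\ref{MIN}), using only the elementary observation that any reasonable notion of basis of an infinite dimensional Banach space---Schauder basis or, in the Hilbert space setting, Riesz basis---forces the system to be both complete and minimal.

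First I would record why a basis is necessarily minimal. If $\{x_n\}$ is a Schauder basis, the associated coordinate (biorthogonal) functionals $\{x_n^\ast\}$ are bounded and satisfy $x_n^\ast(x_m)=\delta_{nm}$; since $x_n^\ast$ is continuous, it vanishes on $\overline{\mathrm{span}}\{x_m: m\neq n\}$ but not at $x_n$, so $x_n\notin\overline{\mathrm{span}}\{x_m: m\neq n\}$, i.e.\ the system is minimal. A Riesz basis is in particular a Schauder basis, so the same conclusion holds. A basis is of course also complete.

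The theorem then follows by a dichotomy on $\X_\Omega$. If $\X_\Omega$ is not complete in $\ell^2(\N)$, it cannot be a basis and we are done. If $\X_\Omega$ is complete, then since $\Omega$ is finite, Theorem~\ref{MINFiniteOmega} asserts that $\X_\Omega$ is not minimal; combined with the previous paragraph this again shows $\X_\Omega$ is not a basis. (One can also bypass the completeness split: if every $r_i$ is finite then $\X_\Omega$ is a finite set and trivially not a basis of the infinite dimensional space $\ell^2(\N)$; otherwise some $r_i=\infty$ and Theorem~\ref{MIN} applies directly to give non-minimality.)

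I do not expect a genuine obstacle: the substantive content is already carried by Theorem~\ref{MIN}/Theorem~\ref{MINFiniteOmega}, whose proof is where the M\"untz-Sz\'asz Theorem does the real work. The only point requiring care is to state explicitly which notion of ``basis'' is intended and to note that it implies minimality, so that the corollary is rigorous rather than merely formal.
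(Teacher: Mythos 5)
Your argument is correct and is essentially the paper's: the paper also obtains Theorem~\ref{NonBasisFiniteOmega} as an immediate corollary of Theorem~\ref{MIN}/Theorem~\ref{MINFiniteOmega}, since a basis must be complete and minimal and the M\"untz--Sz\'asz argument already rules out minimality. Your explicit remarks on biorthogonal functionals and the finite-$r_i$ case merely spell out details the paper leaves implicit.
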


\begin{remarks} \label {GenClass2}
\ 

\begin{enumerate}
\item Theorem \ref {NonBasisFiniteOmega} remains true for the class of operators $A \in \widetilde {\mathcal A}$ described in Remark \ref {GenClass}.

 \item Theorems \ref {MINFiniteOmega} and \ref {NonBasisFiniteOmega} do not hold in the case of $\Omega$ being an infinite set. A trivial example is when $A=I$ is the identity matrix and $\Omega=\N$. A less trivial example is when   $B\in \ell^2(\Z)$ is the symmetric bi-infinite matrix with entries $B_{ii}=1$, $B_{i(i+1)}=1/4$ and $B_{i(i+k)}=0$ for $k\ge 2$. Let $\Omega=3\Z$ and $D_{kk}=2$ if $k=3\Z$, $D_{kk}=1$ if $k=3\Z+1$, and $D_{kk}=-1$ if $k=3\Z+2$. Then $\X_\Omega=\big\{A^le_i:\; i \in \Omega, l=0, \dots, 2 \big\}$ is a basis for $\ell^2(\Z)$. In fact $\X_\Omega$ is a Riesz basis of $\ell^2(\Z)$. Examples in which the $\Omega$ is nonuniform can be found in \cite{ADK14}. 
\end{enumerate}
\end{remarks}

\begin {proof} [Proof of Theorem \ref {MIN}] 
\ 

Again, using Remark~\ref{avsd}, we will show that $\{D^lb:l =0,1,\dots\}$ is not minimal. We first assume that $D=\sum_j\lambda_jP_j$ is non-negative, i.e., $\lambda_j\ge 0$ for all $j \in \N$. Since $A\in B(\ell^2(\N))$, we also have that $0\le\lambda_j\le \|D\|< \infty$. Let $b \in O_\Omega$ with $r=\infty$ and $f\in \overline{span} \{D^lb:l =0,1,\dots\}$ be a fixed vector. Let $n_k$ be any increasing sequence of nonnegative integers  such that $\sum\limits_{k=2}^\infty 1/n_k=\infty$.  Then for any $\epsilon>0$, there exists a polynomial $p$ such that $\|f-p(D)b\|_2\le \epsilon/2$. Since the polynomial $p$ is a continuous function on $C[0,\|D\|]$, (by the M\"untz-Sz\'asz Theorem) there exists  a function $g \in span \{1,x^{n_k}: k \in \N\}$ such that $\sup \big \{|p(x)-g(x)|:\, x \in [0,\|D\|] \big\}\le \frac {\epsilon}{2 \|b\|_2} $. Hence
\begin{eqnarray*}
 \|f-g(D)b\|_2&\le&\|f-p(D)b\|_2+ \|p(D)b-g(D)b\|_2\\
 &\le& \frac{\epsilon}{2} + \frac{\epsilon}{2\|b\|_2}\|b\|_2=\epsilon.
 \end{eqnarray*}
 Therefore $\overline {span} \{b,D^{n_k}b: k \in \N\}=\overline{span} \{D^lb:l =0,1,\dots\}$ and we conclude that $\{D^lb:l =0,1,\dots\}$ is not minimal.
 
 If the assumption about the non-negativity of $D=\sum_j\lambda_jP_j$ is removed, then by the previous argument $ \{D^{2l}b:l =0,1,\dots\}$ is not minimal hence $\{D^lb:l =0,1,\dots\}$ is not minimal either, and the proof is complete.

\end {proof}

\subsection{Frames in infinite dimensional Hilbert spaces}
\ 

In the previous sections, we have seen that although the set $\X_\Omega=\big\{A^le_i:\; i \in \Omega,  l=0,\dots,l_i\big\}$ is complete for  appropriate sets $\Omega$, it cannot  form a basis for $\ell^2(\N)$ if $\Omega$ is a finite set, in general. The main reason is that $\X_\Omega$ cannot be minimal, which is necessary to be a basis. On the other hand, the non-minimality is a statement about redundancy. Thus, although $\X_\Omega$ cannot be a basis, it is possible that $\X_\Omega$ is a frame for sets $\Omega\subset \N$ with finite cardinality. Being a frame is in fact desirable since in this case we can reconstruct any $f \in \ell^2(\N)$ in  stable way from the data $Y=\{f(i),Af(i), A^2f(i),\dots, A^{l_i}f(i): \, i \in \Omega\}$. 

In this section we will show that, except for some special case of the eigenvalues of $A$,  if $\Omega$ is a finite set, i.e., $|\Omega|<\infty$, then $\X_\Omega$ can never be a frame for $\ell^2(\N)$. Thus essentially, either the eigenvalues of $A$ are {\em nice} as we will make precise below and even a single element set $\Omega$ and its iterations may be a frame, or, the only hope for $\X_\Omega$ to be be a frame for $\ell^2(\N)$ is that $\Omega$ infinite - moreover, it need to be well-spread over $\N$. 

\begin {theorem} \label {Noframe0}
Let $A\in \DO$ and let $\Omega\subset \N$ be a finite subset of $\N$. If  $\X_\Omega=\big\{A^le_i:\; i \in \Omega,  l=0,\dots, l_i\big\}$ is a frame, then 
\[
\inf \{\|A^le_i\|_2: \; i \in \Omega,  l=0,\dots,l_i\}=0.
\]

\end{theorem}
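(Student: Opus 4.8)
The plan is to argue by contradiction: suppose $\X_\Omega$ is a frame for $\ell^2(\N)$ while $\inf\{\|A^le_i\|_2 : i \in \Omega, l = 0,\dots,l_i\} = \delta > 0$. By Remark~\ref{avsd} it is equivalent to work with $\{D^lb_i : i \in \Omega, l=0,\dots,l_i\}$, where $D = \sum_j \lambda_j P_j$, and $\|D^lb_i\|_2 \ge \delta$ for all the relevant pairs $(i,l)$. The key structural fact I would exploit is that this frame is a \emph{finite union} of orbits: for each of the finitely many $i \in \Omega$, the vectors $\{D^lb_i : l = 0,\dots,l_i\}$ form one orbit (possibly an infinite one when $l_i = \infty$). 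So it suffices to derive the contradiction from a single orbit $\{D^lb : l = 0,1,2,\dots\}$ with $r_b = \infty$ and $\inf_l \|D^lb\|_2 \ge \delta > 0$ — indeed, if every orbit had only finitely many vectors then $\X_\Omega$ would be finite and could not span the infinite-dimensional space, and a finite union of sets each of which has the desired infimum property still has a positive overall infimum, so the contradiction localizes to one infinite orbit.

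The heart of the matter is then: an orbit $\{D^lb\}_{l\ge 0}$ with norms bounded below by $\delta>0$ cannot be a Bessel sequence, hence cannot be part of a frame (the upper frame bound would be violated). This is where I expect the main obstacle, and where the Kadison-Singer/Feichtinger theorem enters (the paper flags \cite{MSS13} as the tool for Theorems~\ref{Noframe0} and~\ref{NotFrame2}). The idea: any frame can be partitioned into finitely many Riesz basic sequences (Feichtinger conjecture, now a theorem), provided it is a Bessel sequence with vectors of norm bounded below. Apply this to $\X_\Omega$: one of the finitely many pieces contains infinitely many vectors from a single orbit $\{D^{l}b\}$, say $\{D^{l_k}b\}_k$ with $l_1 < l_2 < \cdots$, and this subfamily must be a Riesz basic sequence — in particular minimal. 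But by the M\"untz--Sz\'asz argument already carried out in the proof of Theorem~\ref{MIN}, applied with the exponents $n_k = l_k$: if $\sum 1/l_k = \infty$ then $\{D^{l_k}b\}$ is not even minimal (its closed span coincides with that of the full orbit, yet it has more elements than needed), contradicting the Riesz basic sequence property. If instead $\sum 1/l_k < \infty$, I would pass to a sub-subsequence or re-run Feichtinger so that the gap exponents are dense enough; more carefully, since $\X_\Omega$ is a \emph{finite} union of orbits and is assumed a frame, at least one orbit contributes infinitely many indices $l_k$, and among \emph{any} infinite set of nonnegative integers one can extract an increasing subsequence still having $\sum 1/l_k = \infty$ only if the original set is not too sparse — so the real content is that the Bessel bound forces the orbit norms to decay, i.e.\ $\sum_l \|D^lb\|_2^2 < \infty$ would already contradict $\inf_l\|D^lb\|_2 \ge \delta$, since that sum has infinitely many terms each $\ge \delta^2$.

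Let me restructure to make the cleanest argument: if $\X_\Omega$ is a frame it is Bessel, so for $b = b_i$ with $r_i = \infty$ we have $\sum_{l=0}^\infty |\langle D^lb, f\rangle|^2 \le C\|f\|_2^2$ for every $f$; testing on a suitable $f$ with $\langle D^lb,f\rangle$ not going to zero is delicate, so instead I use that Bessel plus the spectral decomposition forces, for each $j$, the scalar sequence $(\lambda_j^l \langle P_jb, f\rangle)_l$ to be square-summable, which already pins down $|\lambda_j| \le 1$ on the support and, combined with $r_i = \infty$ (infinitely many $j$'s with $P_jb \ne 0$), gives room to build $f$ making the Bessel sum diverge unless $\|D^lb\|_2 \to 0$. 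Concretely $\|D^lb\|_2^2 = \sum_j \lambda_j^{2l}\|P_jb\|_2^2$; if this does \emph{not} tend to $0$ there is $\delta>0$ and infinitely many $l$ with $\sum_j\lambda_j^{2l}\|P_jb\|_2^2\ge\delta$, and a diagonal/Baire argument then produces a unit vector $f$ violating Bessel. The hard part will be making this last extraction rigorous; the Feichtinger-theorem route via Theorem~\ref{MIN}'s M\"untz--Sz\'asz computation is the safer path and is presumably what the authors intend, so in the write-up I would: (1) reduce to one infinite orbit via finiteness of $\Omega$; (2) invoke Feichtinger to split $\X_\Omega$ into finitely many Riesz basic sequences and locate an infinite piece $\{D^{l_k}b\}$ inside a single orbit; (3) quote the M\"untz--Sz\'asz non-minimality from the proof of Theorem~\ref{MIN} to contradict minimality of that piece; (4) conclude $\inf_l\|D^lb\|_2 = 0$, hence the claimed infimum over $\X_\Omega$ is $0$.
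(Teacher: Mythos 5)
Your route is in fact the paper's route: argue by contradiction, note that $|\Omega|<\infty$ plus completeness forces some $l_{i_0}=\infty$, use the Kadison--Singer/Feichtinger theorem to split the Bessel, norm-bounded-below family into finitely many Riesz sequences, and then contradict minimality via the M\"untz--Sz\'asz computation from Theorem~\ref{MIN}. But there is a genuine gap exactly at the decisive step. After the Feichtinger splitting you only secure ``an infinite subfamily $\{D^{l_k}b\}$ of one orbit inside a single Riesz piece,'' whereas M\"untz--Sz\'asz yields non-minimality only when $\sum_k 1/l_k=\infty$; an infinite but sparse exponent set gives no contradiction (for $\sum 1/l_k<\infty$ the monomials $x^{l_k}$ are not complete and the subfamily may well be minimal). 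Your attempted repairs of this case do not work: passing to a sub-subsequence only makes the exponents sparser; ``re-running Feichtinger'' does not change the arithmetic of the partition; and the claim that the Bessel bound forces $\sum_l\|D^lb\|_2^2<\infty$ is false (an orthonormal basis is Bessel with all norms equal to $1$). The fix is the simple pigeonhole the paper uses: the exponent set of the chosen orbit is partitioned into $N$ classes by the Riesz pieces, and since the harmonic series diverges, at least one class $\{n_k\}$ must satisfy $\sum_k 1/n_k=\infty$; that class lies in one Riesz sequence $R_{j_0}$, hence is minimal, and M\"untz--Sz\'asz contradicts this.

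A secondary point you should also address: the M\"untz--Sz\'asz argument needs the spectrum in an interval $[0,\|D\|]$, while a general $A\in\DO$ has real, possibly negative eigenvalues, so quoting Theorem~\ref{MIN}'s computation with exponents $n_k=l_k$ directly for $D$ is not justified. The paper sidesteps this by working with the even powers, i.e.\ with the orbit $\{(D^2)^l b_{i_0}: l\ge 0\}$ of the nonnegative operator $D^2$ (so $\sigma(D^2)\subset[0,\|D\|^2]$); this is still a subset of $\X_\Omega$, so its intersection with each $R_j$ remains a Riesz sequence, and the pigeonhole plus M\"untz--Sz\'asz are applied there. With these two repairs (divergence via pigeonhole, and passing to $D^2$), your outline (1)--(4) becomes exactly the paper's proof of Theorem~\ref{Noframe0}; your step (1) reduction is fine as stated.
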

\begin{proof}[Proof of Theorem \ref {Noframe0}]
\ 

 If $\X_\Omega$ is  a frame, then it is complete. Thus the set  $\Omega_\infty:=\{i\in \Omega: l_i=\infty\}$ is nonempty since $|\Omega|< \infty$. In addition, if $\X_\Omega$ is a frame, then it is a Bessel sequence. Hence $\sup \{\|(D)^lb_i\|_2=\|A^le_i\|_2: i \in \Omega,  l=0,\dots,l_i\} \le C_2$ for some $C_2<\infty$. 
 
 If $\inf \{\|A^le_i\|_2: \; i \in \Omega,  l=0,\dots,l_i\}\ne0$, then the set $\{\|(D)^lb_i\|_2=\|A^le_i\|_2: i \in \Omega,  l=0,\dots,l_i\}$ is also bounded below by some positive constant $C_1$. Therefore, the Kadison-Singer/Feichtinger conjectures proved recently \cite{MSS13} applies, and $\X_\Omega$ is the finite union of Riesz sequences $\bigcup\limits_{j=1}^N R_j$. Let $i_0 \in \Omega_\infty$. Consider the subset $\{(D^2)^lb_{i_0} : l=0,\dots,\infty\}$. Then (by Remark~\ref{avsd}) each set $\{(D^2)^lb_{i_0} : l=0,\dots,\infty\}\cap R_j$ is a Riesz  sequence as well (i.e., a Riesz basis of the closure of its span).  Thus, there exists an increasing sequence $\{n_k\}$  with $\sum\limits_{k=2}^\infty\frac{1}{n_k}=\infty$ and $j_0$ such that $\{(D^2) ^{n_k}b_{i_0}: k=1,\dots,\infty\}$ is a subset of $R_{j_0}$. Hence  $ \{(D^2)^{n_k}b_{i_0}: k=1,\dots,\infty\}$ is a basis of 
$\overline{span} \{(D^2)^{n_k}b_{i_0}: k=1,\dots,\infty\}$. Note that $\sigma(D^2)\subset [0,\|D\|^2]$. Now using the M\"untz-Sz\'asz Theorem \ref {MS} it follows $\{(D^2)^{n_k}b_{i_0}: k=1,\dots,\infty\}$ is not minimal and hence is not a basis for the closure of its span which is a contradiction. Thus, $\inf \{\|A^le_i\|_2: \; i \in \Omega,  l=0,\dots,l_i\}=0$. 
\end{proof}
Therefore, when $|\Omega|<\infty$, the only possibility for $\X_\Omega$ to be a frame, is that 
$$  \inf \{\|A^le_i\|_2: \; i \in \Omega,  l=0,\dots,l_i\} = 0$$ 
and
$$ \sup \{\|A^le_i\|_2: \; i \in \Omega,  l=0,\dots,l_i\} \leq C_2 < \infty. $$
We have the following theorem to establish for which finite sets $\Omega$,  $\X_\Omega$ is not a frame for $\ell^2(\N)$.
\begin {theorem} \label {Noframe1}
Let $A\in \DO$ and let $\Omega\subset \N$  be a finite subset of $\N$.  
 If $1$ or $-1$ is(are) not a cluster point(s) of   $\sigma(A)$, then $\X_\Omega=\big\{A^le_i:\; i \in \Omega,  l=0,\dots,l_i\big\}$ is not a frame.
\end{theorem}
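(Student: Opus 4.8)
The plan is to assume $\X_\Omega$ is a frame and derive a contradiction by restricting it to a co-finite-dimensional reducing subspace of $A$ on which $A$ acts as a strict contraction. By Remark~\ref{avsd} we may work with $D=\sum_j\lambda_jP_j$ and the unit vectors $b_i=Be_i$ in place of $A$ and $e_i$; recall also that $\sigma(A)$ is the closure of $\{\lambda_j\}$, so the hypothesis says that $1$ and $-1$ are not cluster points of $\{\lambda_j\}$. Hence there is $\delta\in(0,1)$ with $\{\lambda_j:\,1-\delta\le|\lambda_j|\le 1+\delta\}\subseteq\{1,-1\}$. If $\X_\Omega$ is not complete it is not a frame, so assume it is complete; then, since $\Omega$ is finite and $\ell^2(\N)$ is infinite-dimensional, the set $\Omega_\infty:=\{i\in\Omega:\,l_i=\infty\}$ is non-empty (otherwise $\X_\Omega$ is a finite set).

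The first real step is to show that $F:=\bigoplus_{|\lambda_j|>1-\delta}E_j$ is finite-dimensional. Being a frame, $\X_\Omega$ is Bessel, so $\sup_l\|D^lb_i\|<\infty$ for every $i\in\Omega_\infty$, which forces $P_jb_i=0$ whenever $|\lambda_j|>1$. Therefore, projecting the complete family $\{D^lb_i\}$ onto $\bigoplus_{|\lambda_j|\ge 1+\delta}E_j$ kills every contribution from the indices $i\in\Omega_\infty$ and leaves only the finitely many vectors $\{D^lb_i:\,i\in\Omega\setminus\Omega_\infty,\ 0\le l\le l_i\}$, whose span has dense finite-dimensional image in that subspace; hence $\bigoplus_{|\lambda_j|\ge 1+\delta}E_j$ is finite-dimensional. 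On the other hand there are at most two indices $j$ with $\lambda_j=\pm1$, and for each of these $\dim E_j\le|\Omega|$ by Theorem~\ref{complete} (completeness gives that $\{P_jb_i:i\in\Omega\}$ is complete in $E_j$). Since no $\lambda_j$ lies in $(1-\delta,1+\delta)$ other than possibly $\pm1$, these two pieces exhaust $F$, so $\dim F<\infty$.

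Now $F^\perp=\bigoplus_{|\lambda_j|\le 1-\delta}E_j$ is infinite-dimensional, it is $D$-invariant and $P_{F^\perp}$ commutes with $D$, and $\|D|_{F^\perp}\|\le 1-\delta<1$. Orthogonally projecting a frame onto a closed subspace yields a frame for that subspace, and the projected family here is $\{P_{F^\perp}D^lb_i\}=\{(D|_{F^\perp})^l(P_{F^\perp}b_i)\}$, whose elements satisfy $\|P_{F^\perp}D^lb_i\|\le(1-\delta)^l$ since $\|b_i\|=1$. Summing a geometric series, $\sum_{i\in\Omega}\sum_{l=0}^{l_i}\|P_{F^\perp}D^lb_i\|^2\le|\Omega|/(1-(1-\delta)^2)<\infty$. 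But for any frame $\{f_n\}$ of an infinite-dimensional Hilbert space with lower bound $c>0$ and any infinite orthonormal set $\{\phi_k\}_{k\in\N}$ in that space, Bessel's inequality gives $\sum_n\|f_n\|^2\ge\sum_n\sum_k|\langle f_n,\phi_k\rangle|^2=\sum_k\sum_n|\langle\phi_k,f_n\rangle|^2\ge c\sum_k 1=\infty$; applied to $F^\perp$ this contradicts the finiteness just obtained. Hence $\X_\Omega$ is not a frame. The main obstacle is the finite-dimensionality of $F$: one must simultaneously use the Bessel bound to eliminate the infinitely-iterated vectors from the large eigenvalues and the cluster-point hypothesis to prevent eigenvalues from accumulating at $\pm1$ from the inside; the remaining ingredients (projecting frames onto subspaces, a geometric series, and the elementary trace estimate above) are routine. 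An alternative opening is to invoke Theorem~\ref{Noframe0} to produce a single index $i_0$ with $\|A^le_{i_0}\|\to0$ and then deduce $\|A^le_{i_0}\|\le(1-\delta)^l$, but the projection argument is cleaner and self-contained.
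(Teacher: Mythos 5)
Your proof is correct, and its second half takes a genuinely different route from the paper's. The paper argues by dichotomy: if some $\lambda_j$ with $|\lambda_j|\ge 1$ satisfies $P_jb_i\ne 0$ for an $i\in\Omega_\infty$, the single test vector $x=P_jb_i$ already violates the upper frame bound; otherwise the cluster-point hypothesis gives $r=\sup\{|\lambda_j|:P_jb_i\ne0,\ i\in\Omega_\infty\}<1$, and for every $\epsilon>0$ one picks a unit vector $f$ orthogonal to the finitely many ``head'' vectors $\{D^lb_i:\ i\in\Omega\setminus\Omega_\infty\}\cup\{D^lb_i:\ i\in\Omega_\infty,\ l\le N\}$, so that the frame sum at $f$ is at most $\epsilon$ and the lower frame bound must be $0$. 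You instead eliminate the troublesome spectrum structurally: Bessel boundedness forces $P_jb_i=0$ for $|\lambda_j|>1$ and $i\in\Omega_\infty$, the part of the space with $|\lambda_j|\ge 1+\delta$ is finite-dimensional because a finite family projects onto a complete set there, and the eigenspaces at $\pm1$ are finite-dimensional by Theorem \ref{complete} since $|\Omega|<\infty$; discarding this finite-dimensional reducing subspace $F$, the operator is a strict contraction on the infinite-dimensional $F^\perp$, the projected family is a frame for $F^\perp$ with square-summable norms (geometric series), and this contradicts the standard fact that a frame of an infinite-dimensional space has $\sum_n\|f_n\|^2=\infty$. Both arguments are elementary (neither needs the Kadison--Singer/Feichtinger input used for Theorem \ref{Noframe0}); the paper's is more hands-on and exhibits explicitly that the lower bound degenerates, while yours is cleaner and handles the modulus-one eigenvalues without a separate case via the finite-dimensionality observation $\dim E_j\le|\Omega|$. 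Two small remarks: you read the (ambiguously worded) hypothesis as ``neither $1$ nor $-1$ is a cluster point,'' which is exactly what the paper's own proof uses, so this is fine; and your closing aside about deriving geometric decay from Theorem \ref{Noframe0} is not quite immediate as stated ($\inf=0$ does not by itself give $\|A^le_{i_0}\|\to0$), but since you do not rely on it, the proof stands.
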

As an immediate corollary we get
\begin {corollary} \label {Noframecompact}
Let $A$ be a compact self-adjoint operator, and $\Omega\subset \N$ be a finite set.  
 Then $\X_\Omega=\big\{A^le_i:\; i \in \Omega,  l=0,\dots,l_i\big\}$ is not a frame.
\end{corollary}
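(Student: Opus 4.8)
The plan is to obtain this as a direct application of Theorem~\ref{Noframe1}, so that the entire task reduces to checking that a compact self-adjoint operator $A$ on $\ell^2(\N)$ satisfies the two hypotheses of that theorem: that $A$ belongs to the class $\DO$, and that neither $1$ nor $-1$ is a cluster point of $\sigma(A)$.

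First I would verify the membership $A \in \DO$. Since $A = A^*$ and $A$ is compact, the spectral theorem for compact self-adjoint operators provides an orthonormal basis of $\ell^2(\N)$ consisting of eigenvectors of $A$; by the very definition of $\DO$ this means $A \in \DO$, so that all the structural hypotheses under which Theorem~\ref{Noframe1} is stated are in force. Next I would recall the shape of the spectrum of such an operator: $\sigma(A)$ is an at most countable set of real eigenvalues (together with $0$ in the infinite-rank case), and its only possible accumulation point is $0$; equivalently, for every $\eps > 0$ there are only finitely many eigenvalues $\lambda_j$ with $|\lambda_j| \ge \eps$. In particular, since $1$ and $-1$ both lie at distance $1$ from $0$, neither of them can be a cluster point of $\sigma(A)$ — and this is exactly the spectral hypothesis required by Theorem~\ref{Noframe1}.

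With $\Omega$ finite by assumption and $\pm 1$ excluded from the set of cluster points of $\sigma(A)$, Theorem~\ref{Noframe1} applies verbatim and yields that $\X_\Omega = \big\{A^l e_i : i \in \Omega,\ l = 0,\dots,l_i\big\}$ is not a frame for $\ell^2(\N)$, which is precisely the assertion of the corollary. There is essentially no obstacle in this argument: the only point deserving a word of care is the assertion $A \in \DO$, which guarantees that the corollary really does fall within the scope of Theorem~\ref{Noframe1}, and this is immediate from the spectral theorem for compact self-adjoint operators. I would also note in passing that the finite-rank case causes no trouble, since then $\sigma(A)$ is a finite set with no cluster points at all.
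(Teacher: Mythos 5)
Your proposal is correct and follows exactly the route the paper intends: the corollary is stated there as an immediate consequence of Theorem~\ref{Noframe1}, using that a compact self-adjoint operator lies in $\DO$ (as the paper itself notes) and that its spectrum can only accumulate at $0$, so $\pm 1$ are not cluster points. Nothing is missing.
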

\begin{remark}
Theorems \ref {Noframe0} and \ref {Noframe1} can be generalized for the class $A \in \widetilde {\mathcal A}$.
\end{remark}
\begin{proof}[ Proof of Theorem \ref {Noframe1}]
\ 

If $\X_\Omega$ is not complete, then it is obviously not a frame of $\ell^2(\N)$. If $\X_\Omega$ is complete in $\ell^2(\N)$, then  the set $\Omega_\infty:=\{i\in \Omega: l_i=\infty\}$ is nonempty.    

If there exists $j \in \N$ and $i \in \Omega_\infty$ such that $|\lambda_j|\ge1$ and $P_jb_i\ne 0$ then for $x=P_jb_i$ we have 
\[
\sum_l|\langle x, D^l b_i\rangle |^2=\sum_l|\lambda_j|^{2l}\|P_jb_i\|_2^4=\infty.
\]
Thus, $\X_\Omega$ is not a frame.

Otherwise, let $r=\sup\limits_{j \in \N}\{|\lambda_j|:  P_jb_i \ne 0 \text{ for some } i \in \Omega_\infty\}<1$. 

Since $-1$ or $1$ are not cluster points of $\sigma(A)$, $r < 1$. But
\[
\|Db_i\|_2\le \sup\limits_{j \in \N}\{|\lambda_j|:  P_jb_i \ne 0\}\|b_i\|_2 \quad \forall i\in \Omega_\infty,
\]
and  therefore we have that  $\|D^lb_i\|_2\le r^l\|b_i\|_2$. 
Now given $\epsilon>0$, there exists $N$ such that
\[
\sum\limits_{i\in\Omega_\infty}\sum\limits_{l>N}\|D^lb_i\|^2_2\le \epsilon.
\]
Choose $f\in \ell^2(\N)$ such that $\|f\|_2=1$, $<f,D^lb_i>=0$ for all $i\in\Omega-\Omega_\infty$ and $l=0,\dots,l_i$ and such that  $<f,D^lb_i>=0$ for all $i\in\Omega_\infty$ and $l=0,\dots,N$. Then
\[
\sum\limits_{i\in\Omega}\sum\limits_{l=0}^{l_i}|<f,D^lb_i>|^2\le\epsilon = \epsilon \|f\|_2.
\]
Since $\epsilon$ is arbitrary, the last inequality implies that $\X_\Omega$ is not a frame since it cannot have a positive lower frame bound.
\end{proof}

Although Theorem \ref {Noframe1} states that $\X_\Omega$ is not a frame for $\ell^2(\N)$, it could be that after normalization of the vectors in $\X_\Omega$, the new set $\mathcal Z_\Omega$  is a frame for $\ell^2(\N)$. It turns out that the obstruction is intrinsic. In fact, this case is even worse, since $\mathcal Z_\Omega$ is not a frame even if $1$ or $-1$ is (are) a cluster point(s) of $\sigma(A)$.

\begin {theorem} \label{NotFrame2} 
Let $A\in \DO$ and let $\Omega\subset \N$  be a finite set.   Then the unit norm sequence $\big\{\frac {A^le_i}{\|A^le_i\|_2}:\; i \in \Omega,  l=0,\dots,l_i\big\}$ is not a frame.
\end{theorem}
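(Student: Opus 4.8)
The plan is to reduce the normalized-frame question to a Bessel/lower-bound contradiction using the same Müntz–Szász machinery that powered Theorems \ref{MIN} and \ref{Noframe0}, but now without needing a lower bound on the norms since normalization has supplied one for free. As in Remark~\ref{avsd} we may replace $A$ by $D=\sum_j\lambda_j P_j$ and $e_i$ by $b_i=Be_i$, so it suffices to show that $\mathcal Z_\Omega=\bigl\{D^lb_i/\|D^lb_i\|_2:\ i\in\Omega,\ l=0,\dots,l_i\bigr\}$ is not a frame of $\ell^2(\N)$. If $\mathcal Z_\Omega$ is not complete there is nothing to prove, so assume it is; then by Theorem~\ref{complete} the set $\Omega_\infty=\{i\in\Omega:\ l_i=\infty\}$ is nonempty, since $|\Omega|<\infty$ forces some $r_i=\infty$.

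**Next** I would fix $i_0\in\Omega_\infty$ and work inside the single cyclic-type tail $\{D^lb_{i_0}:\ l\ge 0\}$. A unit-norm sequence that is a frame is in particular a Riesz-basis-from-below up to the Kadison–Singer/Feichtinger decomposition: by \cite{MSS13} the unit-norm Bessel sequence $\mathcal Z_\Omega$ splits as a finite union $\bigcup_{j=1}^N R_j$ of Riesz sequences. Passing to the even powers $\{(D^2)^lb_{i_0}:\ l\ge0\}$ (so that the relevant spectrum lies in $[0,\|D\|^2]$), the pigeonhole argument of Theorem~\ref{Noframe0} produces an increasing sequence $\{n_k\}$ with $\sum_{k\ge2}1/n_k=\infty$ and an index $j_0$ such that $\bigl\{(D^2)^{n_k}b_{i_0}/\|(D^2)^{n_k}b_{i_0}\|_2:\ k\ge1\bigr\}\subset R_{j_0}$, hence is itself a Riesz sequence, hence a (Riesz) basis of the closure of its span and in particular minimal.

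**The contradiction** then comes from Müntz–Szász exactly as in the proof of Theorem~\ref{MIN}: any fixed vector $f\in\overline{\mathrm{span}}\{(D^2)^lb_{i_0}:\ l\ge0\}$ can be approximated by $p(D^2)b_{i_0}$ for a polynomial $p$, and by Theorem~\ref{MS}(1) applied on $C[0,\|D\|^2]$ the polynomial $p$ is uniformly approximable on $[0,\|D\|^2]$ by elements of $\mathrm{span}\{1,x^{n_k}:k\in\N\}$; since $\|(D^2)^{n_k}b_{i_0}\|_2$ is bounded by $\|b_{i_0}\|_2$ (spectrum in $[0,1]$ on the support of $b_{i_0}$ after the even-power trick — or more simply, a Bessel sequence has bounded norms so this is automatic), the normalization does not disturb the estimate, and we get $\overline{\mathrm{span}}\{b_{i_0},(D^2)^{n_k}b_{i_0}:k\in\N\}=\overline{\mathrm{span}}\{(D^2)^lb_{i_0}:l\ge0\}$. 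Thus the normalized sequence $\{(D^2)^{n_k}b_{i_0}/\|(D^2)^{n_k}b_{i_0}\|_2:k\ge1\}$ is complete in that span yet omitting any one term still spans (by choosing the $n_k$'s with a bit of room, or re-running the argument with a sparser subsequence), so it is not minimal — contradicting that it is a Riesz sequence. Hence $\mathcal Z_\Omega$ is not a frame.

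**The main obstacle** I anticipate is the bookkeeping around the even-power reduction: if $\|D\|>1$ one must be careful that $D^2$ restricted to the support of $b_{i_0}$ still has spectrum in an interval on which Müntz–Szász applies in the form needed — the point being that we only need completeness of monomials in $C$ of the (compact) spectral interval, and the interval $[0,\|D\|^2]$ is fine for part (1) of Theorem~\ref{MS} since it contains $0$. A secondary subtlety is ensuring the pigeonhole step genuinely yields $\sum 1/n_k=\infty$ and not merely an infinite subsequence; this is handled, as in Theorem~\ref{Noframe0}, by noting that a finite union of sets one of which is cofinite in $\{(D^2)^lb_{i_0}\}$ must contain a subsequence of positive lower density, which certainly has divergent reciprocal sum. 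Everything else is a direct transcription of the arguments already given for Theorems~\ref{MIN} and \ref{Noframe0}.
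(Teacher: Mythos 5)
Your proposal is correct and follows essentially the same route as the paper, which simply reduces to $D$ via Remark~\ref{avsd} and then repeats the argument of Theorem~\ref{Noframe0}: unit norms make the Kadison--Singer/Feichtinger hypothesis automatic, and the pigeonhole plus M\"untz--Sz\'asz non-minimality argument (unchanged by normalization) gives the contradiction. Aside from some loose phrasing (e.g.\ the ``cofinite'' remark, where positive upper density is what is actually used), this matches the paper's proof.
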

\begin {proof}
Note that by Remark~\ref{avsd},
$\big\{\frac {A^le_i}{\|A^le_i\|_2}:\; i \in \Omega,  l=0,\dots,l_i\big\}$ is a frame if and only if $\mathcal Z_\Omega=\big\{\frac {D^lb_i}{\|D^lb_i\|_2}:\; i \in \Omega,  l=0,\dots,l_i\big\}$ is a frame.

Now, using the exact same argument as for Theorem~\ref{Noframe0} we obtain the desired result.
\end{proof}

We will now concentrate on the case where there is a cluster point of $\sigma(A)$ at $1$  or $-1$, and we start with the case where $\Omega$ consists of a single sampling point. 

Since $A \in \DO$, $A = B^*DB$, by Remark~\ref{avsd}  $\X_\Omega$ is a frame of $\ell^2(\N)$ if and only if there exists a vector $b = Be_j$ for some $j \in \N$ that corresponds to the sampling point, and $\{D^lb: l=0, \dots\}$ is a frame for $\ell^2(\N)$.

 For this case, Theorem \ref {complete} implies that if $\X_\Omega$ is a frame of $\ell^2(\N)$, then the projection operators $P_j$ used in the description of the operator $A\in \mathcal A$ must be of rank $1$. Moreover, the vector $b$ corresponding to the sampling point must have infinite support, otherwise $l_b<\infty$ and $\X_\Omega$ cannot be complete in $\ell^2(\N)$. Moreover, for this case in order for $\X_\Omega$ to be a frame, it is necessary that $|\lambda_k|<1$ for all $k$, otherwise, if there exists $\lambda_{j_0}\ge 1$ then for $x=P_{j_0}b$ 
(note that by Theorem \ref {complete} $P_{j_0}b\ne 0$) we would have  
\[
\sum_n|\langle x, D^n b\rangle |^2=\sum_n|\lambda_{j_0}|^{2n}\|P_{j_0}b\|_2^4=\infty,
\]
which is a contradiction. 

In addition, if $\X_\Omega$ is a frame, then the sequence $\{\lambda_k\}$  cannot have a cluster point  $a$ with $|a|<1$. To see this, suppose there is a subsequence $\lambda_{k_s} \rightarrow a$ for some $a$ with $|a|<1$, and let $W=\sum_sP_{k_s}(\ell^2(\N))$.  Then $W$ and $W^{\perp}$ are invariant for $D$, and $\ell^2(\N)=W\oplus W^{\perp}$. Set $D_1=D|_{W}$, and $b_1=P_Wb$ where $P_W$ is the orthogonal projection on $W$. Then, by 
 Theorem \ref{Noframe1}, $\{D_1^jb_1: j=0,1, \dots\}$ can not be a frame for  $ W$. It follows that $\X_\Omega$ cannot be a frame for $\ell^2(\N)=W\oplus W^{\perp}$.

Thus the only possibility for $\X_\Omega$ to be a frame of $\ell^2(\N)$ is that $|\lambda_k|\to 1.$ These remarks allow us to characterize when $\X_\Omega$ is a frame for the situation when $|\Omega|=1$.

\begin{theorem} \label {OnePointFrame}
Let  $D=\sum_j\lambda_jP_j$ be such that $P_j$ have rank $1$ for all $j\in \N$, and let $b \in \ell^2(\N)$. 
Then $\{D^lb: l=0,1,\dots\}$ is a frame if and only if
\begin{enumerate}
\item[i)] $|\la_k| < 1$  for all $k.$  
\item[ii)]  $|\la_k| \to 1$.
\item[iii)] $\{\lambda_k\}$ satisfies Carleson's condition 
\begin{equation}
\label{carleson-cond}
\inf_{n} \prod_{k\neq n} \frac{|\la_n-\la_k|}{|1-\bar{\la}_n\la_k|}\geq \delta.
\end{equation}
for some $\delta>0$.
\item[iv)] $b_k=m_k\sqrt{1-|\lambda_k|^2}$ for some sequence $\{m_k\}$ satisfying $0<C_1\le |m_k| \le C_2< \infty$.

\end{enumerate}
\end{theorem}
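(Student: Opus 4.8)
The plan is to transfer the problem into the Hardy space $\hsd$ and then apply Carleson's theorem on interpolating sequences. First, by Remark~\ref{avsd} it is enough to decide when $\{D^lb:l\ge0\}$ is a frame of $\ell^2(\N)$, where $D=\sum_k\la_kP_k$ and each $P_k$ has rank one. Fix a unit eigenvector $v_k$ spanning the range of $P_k$, so that $\{v_k\}_{k\in\N}$ is an orthonormal basis (necessarily infinite, since $\ell^2(\N)$ is infinite-dimensional); write $b=\sum_kb_kv_k$, so that $D^lb=\sum_k\la_k^lb_kv_k$. Since $A$, and hence $D$, is self-adjoint the eigenvalues $\la_k$ are real and distinct, and for $f=\sum_kf_kv_k$ we have $\ip{f}{D^lb}=\sum_k\la_k^l\overline{b_k}f_k$.

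Second, I would settle the necessity of (i)--(iv). A frame is complete, so by Theorem~\ref{complete} every $P_kb\neq0$, i.e.\ $b_k\neq0$ for all $k$. Testing the frame inequalities on $f=v_k$ gives $\sum_{l\ge0}|\ip{v_k}{D^lb}|^2=|b_k|^2/(1-|\la_k|^2)$, which must lie between the two frame bounds: this forces $|\la_k|<1$ for every $k$ (otherwise the series diverges and $\{D^lb\}$ is not even a Bessel sequence), giving (i), and, after writing $b_k=m_k\sqrt{1-|\la_k|^2}$, it forces $0<C_1\le|m_k|\le C_2<\infty$, giving (iv). For (ii): a frame is complete, so by the remarks preceding the statement $\sigma(A)$ can have no cluster point $a$ with $|a|<1$ (restrict $D$ to the closed span of an eigenvector subsequence with $\la_{k_s}\to a$ and invoke Theorem~\ref{Noframe1}); together with (i) this yields $|\la_k|\to1$. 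Condition (iii) will drop out of the reformulation below.

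Third is the core step. Identify the space of square-summable sequences $(a_l)_{l=0,1,\dots}$ with $\hsd$ via Taylor coefficients. Under this identification the analysis operator $f\mapsto(\ip{f}{D^lb})_{l\ge0}$ carries $f=\sum_kf_kv_k$ to the function $h_f(z)=\sum_k\dfrac{\overline{b_k}f_k}{1-\la_kz}$, since the $l$-th Taylor coefficient of the right-hand side is exactly $\sum_k\la_k^l\overline{b_k}f_k$. Let $\widehat{k}_{\la}(z)=\dfrac{\sqrt{1-|\la|^2}}{1-\la z}$ denote the normalised reproducing kernel of $\hsd$ at a real point $\la\in\D$; by (iv) we then have $h_f=\sum_k\overline{m_k}f_k\,\widehat{k}_{\la_k}$. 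Because $|m_k|$ is bounded above and below, $(f_k)\mapsto(\overline{m_k}f_k)$ is an isomorphism of $\ell^2(\N)$, so $\{D^lb\}$ is a frame of $\ell^2(\N)$ if and only if the synthesis map $(c_k)\mapsto\sum_kc_k\widehat{k}_{\la_k}$ is bounded above and bounded below from $\ell^2(\N)$ into $\hsd$; equivalently, if and only if $\{\widehat{k}_{\la_k}\}_{k\in\N}$ is a Riesz sequence in $\hsd$.

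Finally, and this I regard as the crux, one identifies ``$\{\widehat{k}_{\la_k}\}$ is a Riesz sequence in $\hsd$'' with condition (iii): by Carleson's interpolation theorem in its reproducing-kernel form (Shapiro--Shields), for a sequence of distinct points $\{\la_k\}\subset\D$ the normalised reproducing kernels $\{\widehat{k}_{\la_k}\}$ form a Riesz sequence in $\hsd$ precisely when $\{\la_k\}$ is an interpolating sequence, i.e.\ satisfies the Carleson condition \eqref{carleson-cond}. Assembling the pieces: under (i)--(iv) the points $\{\la_k\}\subset\D$ are distinct and satisfy (iii), hence $\{\widehat{k}_{\la_k}\}$ is a Riesz sequence and, by the isomorphism above, $\{D^lb\}$ is a frame (note that $b\in\ell^2(\N)$ together with (iv) forces $\sum_k(1-|\la_k|^2)<\infty$, so the data are consistent); conversely a frame forces (i), (ii) and (iv), and then the same identification forces $\{\widehat{k}_{\la_k}\}$ to be a Riesz sequence, hence (iii). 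The genuine obstacle is only the careful bookkeeping of this Hardy-space dictionary and quoting the precise form of Carleson's theorem; everything else is elementary.
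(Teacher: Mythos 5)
Your proof is correct and follows essentially the same route as the paper: necessity of (i), (ii), (iv) by testing the frame inequalities on eigenvectors and excluding cluster points of modulus less than one via Theorem~\ref{Noframe1}, and the core equivalence with (iii) by transferring to $\hsd$, identifying the relevant vectors with normalized reproducing kernels $\widehat{k}_{\lambda_k}$, and invoking Carleson's interpolation theorem. The only difference is organizational: you carry the multipliers $m_k$ directly into the Hardy-space identification, whereas the paper first reduces to $b^0_k=\sqrt{1-|\lambda_k|^2}$ (Lemma~\ref{equiv}) and then proves the kernel/Carleson equivalence separately (Lemma~\ref{Carleson}).
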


This theorem implies the following Corollary:

\begin{corollary} Let $A = B^*DB \in \DO$, and $D=\sum_j\lambda_jP_j$ be such that $P_j$ have rank $1$ for all $j\in \N$. Then, there exists $i_0\in \N$ such that $\X_\Omega = \{A^l e_{i_0}: l=0,\dots \}$ is a frame for $\ell^2(\N)$, if and only if $\{\lambda_j\}$ satisfy the conditions of Theorem \ref {OnePointFrame} and there exists $i_0\in \N$, such that   $b=Be_{i_0}$ satisfies the condition ${\rm iv}$ of Theorem \ref {OnePointFrame} . 
\end{corollary}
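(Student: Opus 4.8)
The plan is to pass to the Hardy space $\hsd$ and then invoke Carleson's interpolation theorem in the Shapiro--Shields form. Since each $P_j$ has rank one, after diagonalizing we may assume $D$ is the diagonal operator on $\ell^2(\N)$ with entries $\la_k$ and $b=(b_k)_k$; then $(D^lb)_k=\la_k^lb_k$, so for $f=(f_k)_k\in\ell^2(\N)$ one has $\langle f,D^lb\rangle=\sum_k f_k\bar b_k\bar\la_k^{\,l}$, which is exactly the $l$-th Taylor coefficient at $0$ of $F_f(w):=\sum_k\frac{f_k\bar b_k}{1-\bar\la_k w}$. Granting $|\la_k|<1$ for all $k$ (so $F_f$ is analytic on $\D$ and, since $(f_k\bar b_k)_k\in\ell^1$, the interchange of sums is legitimate), this gives
\begin{equation*}
\sum_{l=0}^\infty|\langle f,D^lb\rangle|^2=\norm{F_f}_{\hsd}^2=\Bignorm{\sum_k\frac{f_k\bar b_k}{\sqrt{1-|\la_k|^2}}\;\widehat{k}_{\la_k}}_{\hsd}^2 ,
\end{equation*}
where $\widehat{k}_{\la_k}(w)=\sqrt{1-|\la_k|^2}\,(1-\bar\la_k w)^{-1}$ is the normalized reproducing kernel of $\hsd$ at $\la_k$ (the identity holds in $[0,\infty]$, the middle term being finite exactly when $\{D^lb\}$ is Bessel).

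Next I would dispose of the degenerate cases, which yields necessity of (i) and (iv). If $|\la_{k_0}|\ge1$ with $b_{k_0}\ne0$, then $\sum_l|\langle P_{k_0}b,D^lb\rangle|^2=\sum_l|\la_{k_0}|^{2l}\|P_{k_0}b\|^4=\infty$, contradicting the Bessel bound; if $b_{k_0}=0$, then $e_{k_0}\perp D^lb$ for all $l$, so $\{D^lb\}$ is not complete, hence not a frame. Thus (i) holds and every $b_k\ne0$. Testing the frame inequalities against $f=e_k$ gives $c_1\le\sum_l|\la_k|^{2l}|b_k|^2=|b_k|^2/(1-|\la_k|^2)\le c_2$, which is precisely (iv) with $m_k=b_k/\sqrt{1-|\la_k|^2}$. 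Granting (iv), the diagonal map $f\mapsto\bigparen{f_k\bar b_k/\sqrt{1-|\la_k|^2}}_k=(f_k\bar m_k)_k$ is a bounded-above-and-below isomorphism of $\ell^2(\N)$, so by the displayed identity $\{D^lb\}$ is a frame of $\ell^2(\N)$ \emph{if and only if} the synthesis map $(a_k)_k\mapsto\sum_k a_k\widehat{k}_{\la_k}$ is bounded above and below as a map $\ell^2\to\hsd$, i.e.\ if and only if $\{\widehat{k}_{\la_k}\}$ is a Riesz sequence in $\hsd$. (No completeness of the kernels in $\hsd$ is needed or even available, since the frame condition only controls the norms of the analysis operator.)

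The core of the argument is then the Shapiro--Shields theorem: $\{\widehat{k}_{\la_k}\}$ is a Riesz sequence in $\hsd$ if and only if $\{\la_k\}$ is an interpolating sequence for $H^\infty(\D)$, which by Carleson's theorem is equivalent to the separation condition (iii). Combining this with the previous step gives the equivalence of ``$\{D^lb\}$ is a frame'' with (i), (iii), (iv). It remains to note that (ii) is then automatic: condition (iii) forces $\{\la_k\}$ to be a Blaschke sequence, so $\sum_k(1-|\la_k|)<\infty$ and $|\la_k|\to1$; equivalently, a cluster point $a$ of $\{\la_k\}$ with $|a|<1$ would make infinitely many factors $\frac{|\la_n-\la_k|}{|1-\bar\la_n\la_k|}$ arbitrarily small, contradicting (iii). (Alternatively, (ii) follows directly from Theorem \ref{Noframe1} applied to the restriction of $D$ to the invariant subspace spanned by a subsequence $\{\la_{k_s}\}$ with $|\la_{k_s}|$ bounded away from $1$, as in the remarks preceding the theorem.) The stated Corollary is then immediate from Remark \ref{avsd}: $\{A^le_{i_0}\}$ is a frame iff $\{D^l(Be_{i_0})\}$ is, while conditions (i)--(iii) depend only on $\{\la_j\}$ and (iv) on $b=Be_{i_0}$.

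I expect the main obstacle to be the clean bookkeeping of the equivalence ``frame of $\ell^2(\N)\iff\{\widehat{k}_{\la_k}\}$ a Riesz sequence in $\hsd$'': one must verify the two-sided bounds transfer in both directions, ensure the degenerate behaviours ($|\la_k|\ge1$, some $b_k=0$, failure of Bessel) are excluded \emph{before} passing to the Hardy-space model, and take care never to use completeness of the reproducing kernels. Once this is set up correctly, the appeal to Carleson/Shapiro--Shields is a black box, and the remaining verifications (analyticity of $F_f$, the $\hsd$-norm identity, the $\ell^2$-isomorphism furnished by (iv)) are routine.
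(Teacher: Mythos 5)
Your argument is correct and follows essentially the same route as the paper: you reduce to the diagonal model $\{D^l b\}$ via Remark \ref{avsd}, extract conditions (i) and (iv) by testing against $P_{k}b$ and the coordinate vectors $e_k$ (the paper's Lemma \ref{equiv}), and convert the frame inequalities into the statement that the normalized reproducing kernels $\{\tilde k_{\lambda_k}\}$ form a Riesz (basic) sequence in $\hsd$, which is settled by Carleson's interpolation theorem exactly as in Lemma \ref{Carleson}. The only cosmetic differences are that you fold the paper's two lemmas into one weighted-kernel computation and obtain (ii) from the Blaschke condition implicit in (iii) (or from $b\in\ell^2$ with (iv)) rather than from Theorem \ref{Noframe1}.
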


Theorem \ref {OnePointFrame} follows from the discussion above and the following two Lemmas

\begin{lemma}\label{equiv}
Let $D$ be as in Theorem \ref {OnePointFrame} and assume that $|\la_k| < 1$  for all $k$. Let $b^0_k=\sqrt{1-|\lambda_k|^2}$, and assume that $b^0\in \ell^2(\N)$. Let $b\in \ell^2(\N)$.

Then, $\{D^l b: l\in \N\}$ is a frame for $\ell^2(N)$ if and only if $\{D^l b^0: l\in \N\}$ is a frame and  there exist $C_1$ and $C_2$ such that $b_k/ b^0_k = m_k$ satisfies $0< C_1 \le |m_k| \le C_2 < \infty$. \end{lemma}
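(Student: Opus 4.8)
The plan is to relate the frame operator of $\{D^l b : l \in \N\}$ to that of $\{D^l b^0 : l \in \N\}$ via the multiplier sequence $m_k = b_k/b_k^0$, using the rank-one structure of the $P_j$. Since each $P_k$ has rank one, identify $\ell^2(\N)$ with the sequence space in the eigenbasis, so that $D$ acts as the diagonal (multiplication) operator $(\lambda_k)$ and $D^l b = (\lambda_k^l b_k)_k$. For a vector $f = (f_k)_k$, the frame inequality for $\{D^l b\}$ reads
\begin{equation*}
A\|f\|_2^2 \le \sum_{l=0}^\infty \Bigl| \sum_k \overline{\lambda_k^l b_k}\, f_k \Bigr|^2 \le B\|f\|_2^2 .
\end{equation*}
First I would observe that, writing $f_k = \overline{m_k}\, g_k$ (equivalently $g_k = f_k/\overline{m_k}$, which is legitimate precisely because $0 < C_1 \le |m_k| \le C_2 < \infty$, so $f \mapsto g$ is a bounded invertible diagonal operator on $\ell^2(\N)$), one has $\overline{b_k} f_k = \overline{b_k^0}\,|m_k|^2 g_k$. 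This is not quite a clean change of variables because of the $|m_k|^2$ factor, so instead I would go the other way: set $h_k = \overline{m_k} f_k$, so that $\sum_k \overline{\lambda_k^l b_k} f_k = \sum_k \overline{\lambda_k^l b_k^0}\, h_k$ using $b_k = m_k b_k^0$. Then the analysis-operator norms satisfy
\begin{equation*}
\sum_{l=0}^\infty \Bigl| \sum_k \overline{\lambda_k^l b_k}\, f_k \Bigr|^2 = \sum_{l=0}^\infty \Bigl| \sum_k \overline{\lambda_k^l b_k^0}\, h_k \Bigr|^2,
\end{equation*}
and since $C_1 \|f\|_2 \le \|h\|_2 \le C_2 \|f\|_2$, the frame bounds for $\{D^l b\}$ and $\{D^l b^0\}$ are equivalent up to the constants $C_1, C_2$. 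This gives the ``if'' direction immediately, and the ``only if'' direction for the frame property of $b^0$ by running the same computation backwards.

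The remaining content of the ``only if'' direction is to show that if $\{D^l b\}$ is a frame then $b^0 \in \ell^2$ forces $b_k/b_k^0$ to be bounded above and below. Boundedness above: if $|m_k|$ were unbounded along a subsequence, then choosing $f$ supported on a single coordinate $k$ gives $\sum_l |\lambda_k^l b_k|^2 = |b_k|^2/(1-|\lambda_k|^2) = |m_k|^2$, so the upper (Bessel) bound forces $|m_k| \le \sqrt{B}$; note this also shows $b_k \ne 0$ for every $k$, hence $m_k$ is well defined (and in particular $|\lambda_k| < 1$ is needed for the geometric series to converge, which is part of the hypothesis). Boundedness below: if $|m_{k_j}| \to 0$ along a subsequence, take $f = e_{k_j}$ to get $\sum_l |\langle e_{k_j}, D^l b\rangle|^2 = |m_{k_j}|^2 \to 0$, contradicting the lower frame bound $A > 0$.

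The main obstacle I anticipate is making the interchange-of-variables step fully rigorous in the infinite-dimensional setting: one must check that $h = (\overline{m_k} f_k)_k$ indeed ranges over all of $\ell^2(\N)$ as $f$ does (immediate from $0 < C_1 \le |m_k| \le C_2$), and that the double sum manipulations are valid, i.e. that the analysis operator $f \mapsto (\sum_k \overline{\lambda_k^l b_k} f_k)_l$ is genuinely bounded $\ell^2 \to \ell^2$ in the first place — but this is exactly the Bessel condition, which is part of being a frame, so there is no circularity once we treat the two directions separately. A minor point to handle cleanly is that $b_k \ne 0$ for all $k$ (so $m_k$ makes sense), which as noted follows from the Bessel bound applied to single basis vectors; and the standing hypothesis $|\lambda_k|<1$ for all $k$ guarantees $\|D^l b\|_2^2 = \sum_k |\lambda_k|^{2l}|b_k|^2 < \infty$ so that the vectors $D^l b$ actually lie in $\ell^2(\N)$.
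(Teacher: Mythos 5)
Your proposal is correct and follows essentially the same route as the paper: the substitution $h_k=\overline{m_k}f_k$ (resp.\ its inverse) to transfer frame bounds between $\{D^l b\}$ and $\{D^l b^0\}$, plus testing the frame inequalities on the standard basis vectors $e_k$ to get $\sqrt{A}\le|m_k|\le\sqrt{B}$, is exactly the paper's argument.
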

Note that by assumption  $\sum_{k=1}^{\infty} (1-|\lambda_k|^2 )<+\infty$  since $b^0\in \ell^2(\N)$. In particular $|\la_k| \to 1$.
\begin{lemma}\label{Carleson}
Let $D=\sum_j\lambda_jP_j$ be such that $|\la_k|<1,$ $\la_k \longrightarrow  1$  and let $b_k^0 = \sqrt{1-|\smash{\lambda_k}|^2}$. Then the following are equivalent:
\begin{enumerate}
\item[i)] 
$$\{b^0,Db^0,D^2 b^0,\dots\} \text{ is a frame for } \ell^2(N)$$
\item[ii)]
\begin{equation*}
\inf_{n} \prod_{k\neq n} \frac{|\la_n-\la_k|}{|1-\bar{\la}_n\la_k|}\geq \delta.
\end{equation*}
for some $\delta>0$.
\end{enumerate} 
\end{lemma}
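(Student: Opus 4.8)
The plan is to recast the quantity $\sum_{l\ge0}|\ip{f}{D^lb^0}|^2$ as the square of a norm in the Hardy space $\hsd$, which turns condition (i) into the assertion that a system of normalized reproducing kernels is a Riesz sequence, and then to invoke Carleson's interpolation theorem. Identify $\hsd$ with the sequence space of its Taylor coefficients, so that $\norm{\sum_{l\ge0}c_lz^l}_{\hsd}^2=\sum_{l\ge0}|c_l|^2$, and for $\la\in\D$ write $k_\la(z)=\sqrt{1-|\la|^2}\,(1-\bar\la z)^{-1}$ for the normalized Szeg\H{o} kernel, which satisfies $\ip{k_{\la}}{k_{\mu}}_{\hsd}=\sqrt{1-|\la|^2}\,\sqrt{1-|\mu|^2}\,(1-\bar\la\mu)^{-1}$. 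We may assume throughout that $b^0\in\ell^2(\N)$ and that the $\la_k$ are pairwise distinct: if $b^0\notin\ell^2(\N)$ then (i) fails outright, while (ii) forces $\sum_k(1-|\la_k|^2)<\infty$ and hence $b^0\in\ell^2(\N)$ (an elementary property of sequences obeying \eqref{carleson-cond}); and if two of the $\la_k$ coincide then both (i) and (ii) fail trivially.

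The key step is the computation that, for $f=(f_k)\in\ell^2(\N)$, writing $g_k=f_k\sqrt{1-|\la_k|^2}$ and using $\ip{f}{D^lb^0}=\sum_kg_k\bar\la_k^{\,l}$ together with the geometric series,
\[
\sum_{l\ge0}\ip{f}{D^lb^0}\,z^l=\sum_k\frac{g_k}{1-\bar\la_k z}=\sum_kf_k\,k_{\la_k}(z)=:(Tf)(z),
\]
so that by Parseval
\begin{equation}\label{Car-identity}
\sum_{l\ge0}|\ip{f}{D^lb^0}|^2=\norm{Tf}_{\hsd}^2,
\end{equation}
where $T\colon\ell^2(\N)\to\hsd$ is the synthesis operator of the system $\{k_{\la_k}\}_{k\in\N}$. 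Identity \eqref{Car-identity} is checked first for finitely supported $f$, where every sum is finite and both sides equal $\sum_{j,k}f_k\bar f_j\,\ip{k_{\la_k}}{k_{\la_j}}_{\hsd}$, and then passed to all $f\in\ell^2(\N)$ by a routine approximation (both sides being read in $[0,\infty]$).

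Granting \eqref{Car-identity}, condition (i) says exactly that there are constants $0<A\le B<\infty$ with $A\norm{f}^2\le\norm{Tf}_{\hsd}^2\le B\norm{f}^2$ for all $f\in\ell^2(\N)$; here the lower bound already forces completeness of $\{D^lb^0\}_{l\ge0}$, so ``frame for $\ell^2(\N)$'' coincides with ``Bessel with a positive lower frame bound''. Equivalently, $T$ is bounded and bounded below, i.e. $\{k_{\la_k}\}_{k}$ is a Riesz sequence --- a Riesz basis for its closed linear span --- in $\hsd$. By the Hilbert-space form of Carleson's interpolation theorem (Shapiro--Shields), $\{k_{\la_k}\}_k$ is a Riesz sequence in $\hsd$ if and only if $\{\la_k\}$ is an interpolating sequence for $\hsdinf$, and by Carleson's theorem this holds if and only if $\{\la_k\}$ satisfies \eqref{carleson-cond}. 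Chaining these equivalences gives (i)$\iff$(ii).

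The creative content is precisely the reformulation \eqref{Car-identity}: once it is in hand, the lemma is a direct translation of Carleson's theorem, so the main obstacle is conceptual (spotting this identity) rather than technical. The remaining work --- the finite-support verification of \eqref{Car-identity}, the approximation to general $f$, and the two degenerate cases noted above --- is routine.
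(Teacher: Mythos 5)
Your proof is correct and follows essentially the same route as the paper: the identity $\sum_l|\langle f,D^lb^0\rangle|^2=\|Tf\|^2_{\hsd}$ is exactly the paper's computation of the quadratic form of the Gramian $G_\Lambda$ of the normalized vectors $\Tc_k=(1,\lambda_k,\lambda_k^2,\dots)$, i.e. of the normalized Szeg\H{o} kernels, and both arguments then conclude via Carleson's interpolation theorem (Riesz sequence of normalized kernels $\iff$ interpolating sequence $\iff$ condition \eqref{carleson-cond}). Your explicit handling of the degenerate cases ($b^0\notin\ell^2(\N)$, repeated $\lambda_k$) and of the passage from finitely supported $f$ is a minor tightening of details the paper leaves implicit.
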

In Lemma \ref {Carleson}, the assumption $\la_k \longrightarrow  1$ can be replaced by $\la_k \longrightarrow  -1$ and the lemma remains true. Its proof, below, is due to J. Antezana \cite{Ant14} and is a consequence of a theorem by Carleson \cite{KH88} about interpolating sequences in the Hardy space $\hsd$ of the unit disk in $\C$.

\begin {proof}[Proof of Lemma \ref {equiv}]
\ 

Let us first prove the sufficiency. Assume that $\{D^l b^0: l\in \N\}$ is a frame for $\ell^2(\N)$ with positive frame bounds $A$, $B$, and let $b \in \ell^2(\N)$ such that $b_k = m_k b^0_k$ with $0< C_1 \le |m_k| \le C_2 < \infty$. Let $x \in \ell^2(\N)$ be an arbitrary vector and define $y_k=\overline {m_k}x_k$. Then $y \in \ell^2(\N)$ and
$C_1 \|x\|_2 \le \|y\| \le C_2\|x\|_2$. Hence
\[
C_1^2A\|x\|^2_2\le \sum\limits_l |\langle y, D^lb^0\rangle |^2=\sum\limits_l\langle x, D^lb\rangle |^2\le C^2_2B\|x\|^2_2,
\]
and therefore $\{D^l b: l\in \N\}$ is a frame for $\ell^2(\N)$.

Conversely, let $b \in \ell^2(\N)$  and assume that $\{D^l b: l\in \N\}$ is a frame for $\ell^2(N)$  with frame bounds $A^\prime$ and $B^\prime$. 
Then for any vector $e_k$ of the standard orthonormal basis of $\ell^2(\N)$, we have 
\[
A^\prime  \le \sum_{l=0}^{\infty} |<e_k,D^lb>|^2=\frac{|b_k|^2}{1-|\lambda_k|^2}\le B^\prime.
\]
Thus $\sqrt{A^\prime} b^0_k\le |b_k|\le \sqrt{B^\prime} b^0_k$ for all $k$. Thus, the sequence $\{m_k\}\subset \C$ defined by $b_k=m_kb^0_k$ satisfies $\sqrt{A^\prime} \le |m_k|\le \sqrt{B^\prime}$. 

Let $x \in \ell^2(\N)$ be an arbitrary vector and define now $y_k=\frac 1 {\overline {m_k}}x_k$. Then $y \in \ell^2(\N)$ and
\[
\frac {A^\prime} {B^{\prime }}\|x\|^2_2\le \sum\limits_l |\langle x, D^lb^0\rangle |^2=\sum\limits_l |\langle y, D^lb\rangle |^2\le \frac {B^\prime} {A^{\prime }}\|x\|^2_2.
\]
and so $\{D^l b^0: l\in \N\}$ is a frame for $\ell^2(\N)$.
\end{proof}

The proof of  Lemma \ref {Carleson} relies on a Theorem  by Carleson on interpolating sequences in the Hardy space $\hsd$ on the open unit disk $\D$ in the complex plane. If $H(\mathbb{D})$ is the vector space of holomorphic functions on $\D$, $\hsd$ is defined as
$$
\hsd=\Big\{f \in H(\mathbb{D}): f(z)=\sum_{n=0}^\infty a_n z^n \ \ \mbox{for some sequence $\{a_n\}\in\ell^2(\N)$}\Big\}.
$$
Endowed with the inner product between $f=\sum_{n=0}^\infty a_n z^n$ and $g= \sum_{n=0}^\infty a^\prime_n z^n$ defined by $\langle f,g\rangle= \sum a_n\overline {a^\prime_n}$, $\hsd$ becomes a Hilbert space isometrically isomorphic to $\ell^2(\N)$ via the isomorphism $\Phi(f)=\{a_n\}$.
\begin {definition}
A sequence $\{\lambda_k\}$ in $\D$ is an interpolating sequence for $\hsd$ if for any function $f \in \hsd$ the sequence $\{ \frac {f(\lambda_k)}{\sqrt{1-|\lambda_k|^2}}\}$ belongs to $\ell^2(\N)$ and conversely, for any $\{c_k\} \in \ell^2(\N)$, there exists a function $f\in \hsd$ such that $f(\lambda_k)=c_k$.
\end{definition}
\begin{proof} [Proof of Lemma \ref {Carleson} ]
\ 

Let $\Tc_{k}$, denote the vector in $\ell^2(\N)$ defined by $\Tc_{k} = (1,\lambda_k,\lambda_k^2,\dots)$, and $x \in \ell^2(\N)$. Then 

$$
\sum_{l=0}^{\infty} |<x,D^l b^0>|^2= \sum_{l=0}^{\infty}\big{|} \sum_{k=1}^{\infty} x_k \lambda_k^l\sqrt{1-\smash{\lambda_k^2}}  
\big{|}^2 = \sum_{s=1}^{\infty}\sum_{t=1}^{\infty} \frac {<\Tc_s,\Tc_t>} {\|\Tc_s\|_2\|\Tc_t\|_2} x_s \overline{x_t}.
$$
Thus, for $\{D^lb^0: l=0,1,\dots\}$ to be a frame of $\ell^2(\N)$, it is necessary and sufficient that the Gramian $G_{\Lambda}=\{G_{\Lambda}(s,t)\} = \big{\{}\frac {<\Tc_s,\Tc_t>} {\|\Tc_s\|_2\|\Tc_t\|_2} \big{\}}$ be a bounded invertible operator on $\ell^2(N)$ (Note that $G_{\Lambda}$ is then the frame operator for $\{D^lb^0: l=0,1,\dots\}$). 

Equivalently, $\{D^lb^0: l=0,1,\dots\}$ is a frame of $\ell^2(\N)$ if and only if the sequence $\{\widetilde {\Tc_j}=\frac {\Tc_j} {\|\Tc_j\|_2}\}$ is a Riesz basic sequence in $\ell^2(\N)$, i.e., there exist constants $0<C_1\le C_2<\infty$ such that 
\[
C_1\|c\|^2_2\le\|\sum_jc_j\widetilde{\Tc}_j\|^2_2\le C_2\|c\|^2_2 \quad \text {for all } c \in \ell^2(\N).
\]
By the isometric map $\Phi$ from $\ell^2(\N)$ to $\hsd$ defined above, $\{D^lb^0: l=0,1,\dots\}$ is a frame of $\ell^2(\N)$ is a frame if and only if the sequence $\{\tilde k_{\lambda_j}=\Phi (\widetilde {\Tc_j}) $ is a Riesz basic sequence in $\hsd$. 

Let $k_{\lambda_j}=\Phi(\Tc_j)$. It is not difficult to check that for any $f\in \hsd$, $\langle f, k_{\lambda_j}\rangle=f(\lambda_j)$ and that $\{\lambda_j\}$ is an interpolating sequence in $\hsd$ if and only if  $G_{\Lambda}=\big(\langle \tilde k_{\lambda_j},\tilde k_{\lambda_j} \rangle\big)$ is a bounded invertible operator on $\ell^2(\N)$. By Carleson' s Theorem \cite {KH88}, this happens if and only if \eqref {carleson-cond} is satisfied.
\end {proof}
Frames of the form $\{D^lb_i: i \in \Omega, l=0\dots,l_i\}$ for the case when $|\Omega|\ge 1$ or when the projections $P_j$  have finite rank but possibly greater than or equal to $1$ can be  easily found by using Theorem \ref {OnePointFrame}. For example, if $|\Omega|=2$, $P_j(\ell^2(\N))$ has dimension $1$ for $j \in \N$, $b_1$, $\{\lambda_k\}$ satisfies the conditions of Theorem \ref {OnePointFrame} and $b_2$ is such that $b_2(k)=m_k\sqrt{1-|\lambda_k|^2}$ for some sequence $\{m_k\}$ satisfying $ |m_k| \le C< \infty$. To construct frames for the case when the projections $P_j$  have finite rank but possibly greater than or equal to $1$, we note that there exist orthogonal subspaces $W_1,\dots,W_N$ of $\ell^2(\N)$  such that  operator $D_i$ on each $W_i$ either has finite dimensional range, or satisfies  the condition of Theorem  \ref {OnePointFrame}.


\bibliographystyle{amsplain}
\providecommand{\bysame}{\leavevmode\hbox to3em{\hrulefill}\thinspace}
\providecommand{\MR}{\relax\ifhmode\unskip\space\fi MR }
\providecommand{\MRhref}[2]{%
  \href{http://www.ams.org/mathscinet-getitem?mr=#1}{#2}
}
\providecommand{\href}[2]{#2}

\end{document}